\newtheorem{theorem}{Theorem}[section]
\newtheorem{corollary}[theorem]{Corollary}
\newtheorem{lemma}[theorem]{Lemma}
\newtheorem{proposition}[theorem]{Proposition}
\theoremstyle{definition}
\newtheorem{definition}[theorem]{Definition}
\newtheorem{remark}[theorem]{Remark}
\newtheorem{example}[theorem]{Example}
\newcommand{\vungoc}{V\~u Ng\d{o}c}
\def\epsilon{\varepsilon}
\def\phi{\varphi}
\newcommand{\al}{\alpha}
\newcommand{\be}{\beta}
\newcommand{\ze}{\zeta}
\newcommand{\lam}{\lambda}
\newcommand{\om}{\omega}
\def\D{{\mathbb D}}
\def\R{{\mathbb R}}
\def\mbS{{\mathbb S}} 
\def\Z{{\mathbb Z}}
\newcommand{\x}{\times}
\newcommand{\del}{\partial}
\newcommand{\lapla}{\triangle}
\DeclareMathOperator{\rk}{rk}
\def\slashii#1{\setbox0=\hbox{$#1$}             
\dimen0=\wd0                                 
\setbox1=\hbox{\sl/} \dimen1=\wd1            
\ifdim\dimen0>\dimen1                        
\rlap{\hbox to \dimen0{\hfil\sl/\hfil}}   
#1                                        
\else                                        
\rlap{\hbox to \dimen1{\hfil$#1$\hfil}}   
\hbox{\sl/}                               
\fi}                                         %
\def\slashiii#1{\setbox0=\hbox{$#1$}#1\hskip-\wd0\hbox to\wd0{\hss\sl/\/\hss}}
\newcommand{\refthmIntro}{Theorem \ref{thmIntro}}
\newcommand{\refmorse}{Corollary \ref{morse}}
\newcommand{\refcoupledAngular}{Example \ref{coupledAngular}}
\newcommand{\refcoupledSpin}{Example \ref{coupledSpin}}
\newcommand{\refnondegBF}{Lemma \ref{nondegBF}}
\newcommand{\refthmEliasson}{Theorem \ref{thmEliasson}}
\newcommand{\refmainThm}{Theorem \ref{mainThm}}
\newcommand{\refnoPole}{Lemma~\ref{noPole}}  
\newcommand{\refcritQuotient}{Lemma \ref{critQuotient}}
\title[Semitoric systems with two focus-focus singularities]{A family of compact semitoric systems with two focus-focus singularities}
\author[Sonja Hohloch and Joseph Palmer]{}
\subjclass{Primary: 37J35; Secondary: 53D20.}
 \keywords{Symplectic geometry, integrable system, semitoric integrable system,
 Hamiltonian system, momentum map, focus-focus singularity.}
 \email{sonja.hohloch@uantwerpen.be}
 \email{j.palmer@math.rutgers.edu}
\thanks{The first author was partially supported by the Research Fund of the University of Antwerp and the second
author is partially supported by an AMS-Simons travel grant.}
\begin{document}
\maketitle
\centerline{\scshape Sonja Hohloch}
\medskip
{\footnotesize
 \centerline{University of Antwerp}
   \centerline{Department of Mathematics and Computer Science}
   \centerline{Middelheimlaan 1}
   \centerline{ B-2020 Antwerpen, Belgium}
} 

\medskip

\centerline{\scshape Joseph Palmer}
\medskip
{\footnotesize
 \centerline{Rutgers University}
 \centerline{Department of Mathematics}
 \centerline{Hill Center - Busch Campus}
 \centerline{110 Frelinghuysen Road}
 \centerline{Piscataway, NJ 08854, USA.}
}

\bigskip


\begin{abstract}
About 6 years ago, semitoric systems were classified by Pelayo $\&$ \vungoc\ by means of five invariants. Standard examples are the coupled spin oscillator on $\mbS^2 \x \R^2$ and coupled angular momenta on $\mbS^2 \x \mbS^2$, both having exactly one focus-focus singularity.
But so far there were no {\em explicit} examples of systems with more than one focus-focus singularity
which are semitoric in the sense of that classification.
This paper introduces a $6$-parameter family of integrable systems on $\mbS^2 \x \mbS^2$ and proves that, for certain ranges of the parameters, it is a compact semitoric system with precisely two focus-focus singularities.
Since the {\em twisting index} (one of the semitoric invariants) 
is related to the relationship between different focus-focus points,
this paper provides systems for the future study of the twisting index.
\end{abstract}


\section{Introduction}

An \emph{integrable system} is a triple $(M,\om,F)$ where $(M,\om)$ is a $2n$-dimensional symplectic manifold and
$F\colon M\to \R^n$ is a smooth function, known as the \emph{momentum map}, whose components Poisson commute and
are linearly independent almost everywhere.  The points at which the linear independence fails are known as 
\emph{singular points}.
An integrable system is \emph{toric} if $M$ is compact and the Hamiltonian vector fields of the components
all have periodic flow of the same period; 
in this case the image of the momentum map $F(M)$
is a convex $n$-dimensional polytope (a special case of the Atiyah-Guillemin-Sternberg Theorem~\cite{Atiyah1982, GuilStern1982})
and additionally, by the work of Delzant~\cite{Delzant1988}, $F(M)$ completely determines the system $(M,\om,F)$ up
to equivariant symplectomorphism.

So-called semitoric integrable systems are a special class of integrable systems on $4$-manifolds for which
one of the two components of its momentum map has a Hamiltonian vector field with periodic flow.
Specifically, a \emph{semitoric integrable system} is an integrable system $(M,\om, F=(J,H))$
such that $J$ is proper with periodic flow and every singular point is nondegenerate with no
hyperbolic blocks (see Section~\ref{sec_intsystem_singpts} for a discussion of types of singular points).  
Semitoric integrable systems can have singular points of focus-focus type, which
do not occur in toric integrable systems, and are an example
of almost toric manifolds which were introduced by Symington~\cite{Sy2003}.

Semitoric integrable systems were studied and classified
by Pelayo $\&$ \vungoc\ ~\cite{PVNinventiones,PVNacta}. 
The classification is in terms of five invariants: the number of 
focus-focus points (which is finite according to \vungoc~\cite{VuNgoc07}); an 
infinite family of polygons known
as a semitoric polygon; a Taylor series in two variables for each focus-focus point; the 
height of the focus-focus value in the semitoric polygon;
and the twisting index, which, roughly, is an integer for each pair of focus-focus points describing the `twist' of the singular Lagrangian fibration
between them. Semitoric systems are rigid enough to admit a classification, but flexible enough to appear more frequently in physical examples and to admit more interesting dynamics.  The main reason semitoric systems exhibit more interesting behavior than toric systems is the presence of the focus-focus points and the monodromy that these
singularities can produce in the integral affine structure of the momentum map image $F(M)$.

While the Pelayo-\vungoc\ classification predicts many systems and gives certain properties of those
systems, one thing that has thus far been lacking are {\em explicit} examples
of semitoric systems giving the symplectic manifold $(M,\om)$ and the momentum map $F$.
Le Floch $\&$ Pelayo~\cite{LFPecoupledangular} explicitly describe the coupled angular momenta system (originally 
described in~\cite{SaZh-PhysLettersA}, see \refcoupledAngular)
and details of the coupled spin oscillator (see \refcoupledSpin) are spread over several papers.
These systems each have exactly one focus-focus singularity.
In the present work we describe semitoric systems on $M=\mbS^2 \x \mbS^2$ which have two focus-focus singular points,
generalizing the system from~\cite{LFPecoupledangular}.
More precisely, the main result of this paper is the following.

\begin{theorem}
\label{thmIntro}
 Let $M=\mbS^2\x\mbS^2$ be equipped with the symplectic form $\om = -( R_1\om_{\mbS^2}\oplus R_2\om_{\mbS^2})$
 where $\om_{\mbS^2}$ is the standard volume form on the sphere
 and $0<R_1<R_2$ are real numbers.
 For $\vec{R}:=(R_1, R_2)$ and $\vec{t}:=(t_1, t_2, t_3, t_4)\in\R^4$ define $J_{\vec{R}},H_{\vec{t}} \ \colon M\to\R$ by
 \begin{equation}\label{eqn_thesystem}
  \begin{cases}
   J_{\vec{R}} (x_1, y_1, z_1, x_2, y_2, z_2): = R_1 z_1 + R_2 z_2,\\
   H_{\vec{t}} \ (x_1, y_1, z_1, x_2, y_2, z_2): = t_1 z_1 + t_2 z_2 + t_3 (x_1 x_2 + y_1 y_2) + t_4 z_1 z_2
  \end{cases}
 \end{equation}
 where $(x_i, y_i, z_i)$ are Cartesian coordinates on $\mbS^2 \subset \R^3$ for
 $i=1,2$.
 Then there exist choices of $t_1, t_2, t_3, t_4, R_1, R_2$ such that
 $(M,\om,(J_{\vec{R}},H_{\vec{t}}))$ is a semitoric system with exactly
 two focus-focus points.
\end{theorem}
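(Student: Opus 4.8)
The plan is to check the defining axioms of a semitoric system one at a time, leaving the focus--focus count for the end, since all the parameter-dependence is concentrated there. First I would verify that $(M,\om,(J_{\vec R},H_{\vec t}))$ is an integrable system. The function $J_{\vec R}$ generates the simultaneous rotation of the two spheres about their common $z$-axis, and each summand of $H_{\vec t}$ -- namely $z_1$, $z_2$, $x_1x_2+y_1y_2$, and $z_1z_2$ -- is invariant under this diagonal rotation; hence $\{J_{\vec R},H_{\vec t}\}=X_{J_{\vec R}}(H_{\vec t})=0$ (cf.\ \refpoissonComm), and $dJ_{\vec R}$, $dH_{\vec t}$ are visibly independent off a lower-dimensional set. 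Because the scaling by $R_i$ in $J_{\vec R}$ is matched by the scaling by $R_i$ in $\om$, the Hamiltonian vector field $X_{J_{\vec R}}$ is exactly the generator of the diagonal rotation, so its flow is $2\pi$-periodic for every admissible $\vec R$, and properness is automatic from compactness of $M$. Thus the $J$-part of the axioms holds for all parameters, and only the singular-point analysis depends on $\vec t$ and $\vec R$.

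Next I would locate the singular points. Since $J_{\vec R}$ generates an effective $S^1$-action, every rank-$0$ point of $F=(J_{\vec R},H_{\vec t})$ is a fixed point of that action, and the diagonal rotation fixes precisely the four pole--pole configurations with $(z_1,z_2)\in\{\pm1\}^2$. A direct check shows $dH_{\vec t}=0$ at each of these four points (all summands have vanishing differential at a pole--pole point), so the rank-$0$ locus is exactly these four points; everything else is rank $1$. These four points are therefore the only candidates for elliptic--elliptic or focus--focus singularities.

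The heart of the argument is the classification of the four fixed points. At each of them I would compute the linearized vector fields $\om^{-1}\mathrm{Hess}\,J_{\vec R}$ and $\om^{-1}\mathrm{Hess}\,H_{\vec t}$ in Darboux coordinates and examine the characteristic polynomial of a generic combination, a computation I would defer to an appendix as in \refCalcCharPoly. By the Williamson/Eliasson picture, the point is focus--focus exactly when this quartic (even in the eigenvalue) has a genuinely complex quadruple of roots $\pm a\pm bi$ with $a,b\neq0$, and elliptic--elliptic when the roots are purely imaginary; the borderline case is degenerate. Thus the type at each pole is governed by the sign of a single discriminant expression in $t_1,t_2,t_3,t_4,R_1,R_2$, and once these discriminants are nonzero and the two generators span a Cartan subalgebra, nondegeneracy and the absence of hyperbolic blocks follow from a criterion such as \refnondegCriterium.

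Finally I would handle the rank-$1$ stratum and solve the resulting inequalities. Away from the poles $dJ_{\vec R}\neq0$, so one can symplectically reduce by the $S^1$-action and read the transverse type off the reduced one-degree-of-freedom Hamiltonian; the aim is to show every rank-$1$ point is transversally elliptic (nondegenerate, no hyperbolic block) throughout the region of interest. Combining these elliptic conditions on the rank-$1$ locus with the four pole discriminants, I would then exhibit an explicit open set of parameters with $0<R_1<R_2$ on which exactly two poles satisfy the complex-root (focus--focus) inequality while the other two poles and the whole rank-$1$ stratum stay nondegenerate and elliptic. The hard part will be precisely this last step: showing that \emph{exactly two} is achievable -- not zero or four -- which requires understanding how the four pole discriminants vary simultaneously in the six parameters and ruling out degeneracies on the separating walls. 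I expect the coupled angular momenta system of \refcoupledAngular, which carries a single focus--focus point, to guide the search for such a region by deformation of its parameters.
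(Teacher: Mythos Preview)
Your proposal is correct and follows essentially the same route as the paper: verify $\{J,H\}=0$ and integrability, locate the four rank-$0$ points at the pole--pole configurations, classify each via the discriminant of the characteristic polynomial of $\om^{-1}d^2H$ (yielding focus--focus when the discriminant is negative and elliptic--elliptic otherwise), treat the rank-$1$ stratum by symplectic reduction and checking the reduced Hessian, and then exhibit an explicit parameter choice. The paper carries this out concretely at $R_1=1$, $R_2=2$, $t_1=t_2=\tfrac14$, $t_3=\tfrac12$, $t_4=0$, where direct computation shows $(N,N)$ and $(S,S)$ are elliptic--elliptic while $(N,S)$ and $(S,N)$ are focus--focus, and all rank-$1$ points are elliptic--regular; your invariance argument for the Poisson bracket is a slightly slicker phrasing of the paper's direct calculation, but otherwise the strategies coincide.
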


\noindent
\refthmIntro\ is restated in more detail in Section~\ref{sec_familyofsystems} as \refmainThm.
The coupled angular momenta system with coupling parameter $t\in \ ]0,1[$ is the special case of Equation~\eqref{eqn_thesystem} 
with $t_1 = t$, $t_3 = t_4 = 1-t$, and $t_2=0$. 
The coupled angular momenta system describes the rotation of two vectors
(with magnitudes $R_1$ and $R_2$) about the $z$\--axis and has as a second integral
a linear combination of the $z$\--component of the first vector and the inner produce of 
the two vectors, while the system in Equation~\eqref{eqn_thesystem} includes additionally the $z$\--component
of the second vector and also breaks the inner product into two components, namely the projection to the $z$\--axis and the projection to the $xy$\--plane.

The system in Equation~\eqref{eqn_thesystem} is studied from a different point of view in mathematical physics,
where it is a special case of a generalized Gaudin model. We refer the interested reader to Petrera's PhD thesis \cite{petrera} and the references therein for the development since Gaudin's original work \cite{gaudin}.

 \refthmIntro\ gives explicit global formulas (defined by
the same expression on the entire manifold) for a family of examples of
semitoric systems with more than one focus-focus point. This family should
be useful for understanding semitoric systems at a concrete, computationally
amenable, context.
%
The twisting index invariant is
related to the relationship between different focus-focus singular points, so having an example
with multiple focus-focus points will help in understanding this invariant (though it does actually appear
in a more subtle way for systems with only one focus-focus point).

Additionally, not only the system itself, but also the method by which we produce this system is of interest.
We construct it as a linear combination of four different systems of toric type (semitoric systems with
no focus-focus points) and in this way one can see how it deforms into each of these four systems (see Figure~\ref{fig_array})
which correspond to four elements of the associated semitoric polygon.  
Let $N$ denote the north pole of $\mbS^2$ and $S$ denote the south pole, so that
$(N,N)$, $(N,S)$, $(S,N)$, and $(S,S)$ are the four possible products of poles in $\mbS^2\times\mbS^2$.
The next theorem
follows from Theorem~\ref{thm_s1s2full} in Section~\ref{sec_s1s2},
in which we take $R_1 = 1$ and $R_2 = 2$ for simplicity.
\begin{theorem}\label{thm_s1s2intro}
For $s_1,s_2\in [0,1]$ let $(J_{\vec{R}},H_{(s_1,s_2)})$ denote the system $(J_{\vec{R}},H_{\vec{t}})$
where
\[
t_1 = (1-s_1)(1-s_2),\qquad t_2 = s_1 s_2,\qquad t_3= s_1+s_2-2 s_1 s_2,\qquad t_4 = s_1-s_2.
\]
Then $(J_{(1,2)},H_{(s_1,s_2)})$ has the following properties: 
\begin{enumerate}
 \item it is an integrable system for all $(s_1,s_2)\in [0,1]^2$\emph{;}
 \item it is a semitoric system when $(s_1,s_2)\in[0,1]^2\setminus\gamma$ where $\gamma\subset [0,1]^2$ is the union of four smooth curves\emph{;}
 \item the points $(N,S),(S,N)\in\mathbb{S}^2\times\mathbb{S}^2$ transition between being elliptic-elliptic, focus-focus, and
  degenerate depending on the value of $(s_1,s_2)$\emph{;}
 \item it is semitoric with exactly two focus-focus points for all $(s_1,s_2)$ in an open neighborhood of $\left(\frac{1}{2},\frac{1}{2}\right)$\emph{;}
 \item it is semitoric with no focus-focus point if $(s_1,s_2)\in \{(0,0),(0,1),(1,0),(1,1)\}$.
\end{enumerate}
\end{theorem}
The set $\gamma$ represents the moment at which singular points become degenerate while they change
between focus-focus and elliptic-elliptic type.
Proposition~\ref{prop_degenerate} states that if the type
of a singular point changes from focus-focus to elliptic-elliptic
by smoothly varying the integrals (on a fixed manifold) then it must become degenerate 
during the transition, in fact, it is undergoing a Hamiltonian-Hopf bifurcation, see Remark~\ref{rmk_HH}.
The set $\gamma$ is an intersection of zero sets of discriminants of certain polynomials, see Equation~\eqref{def_gamma}.
The image of the momentum map for the system in Theorem~\ref{thm_s1s2intro} is plotted in
Figure~\ref{fig_array} for various choices of $(s_1,s_2)$ and $\gamma$ is plotted in Figure~\ref{fig_FFs1s2}. 
The coupled angular momenta system from Le Floch $\&$ Pelayo~\cite{LFPecoupledangular} is exactly the one parameter family of systems
obtained from the system in Theorem~\ref{thm_s1s2intro} by taking $s_2=0$, so the momentum map image of the coupled
angular momentum system is the bottom row of images in Figure~\ref{fig_array}.

\begin{figure}[h]
\includegraphics[width=350pt]{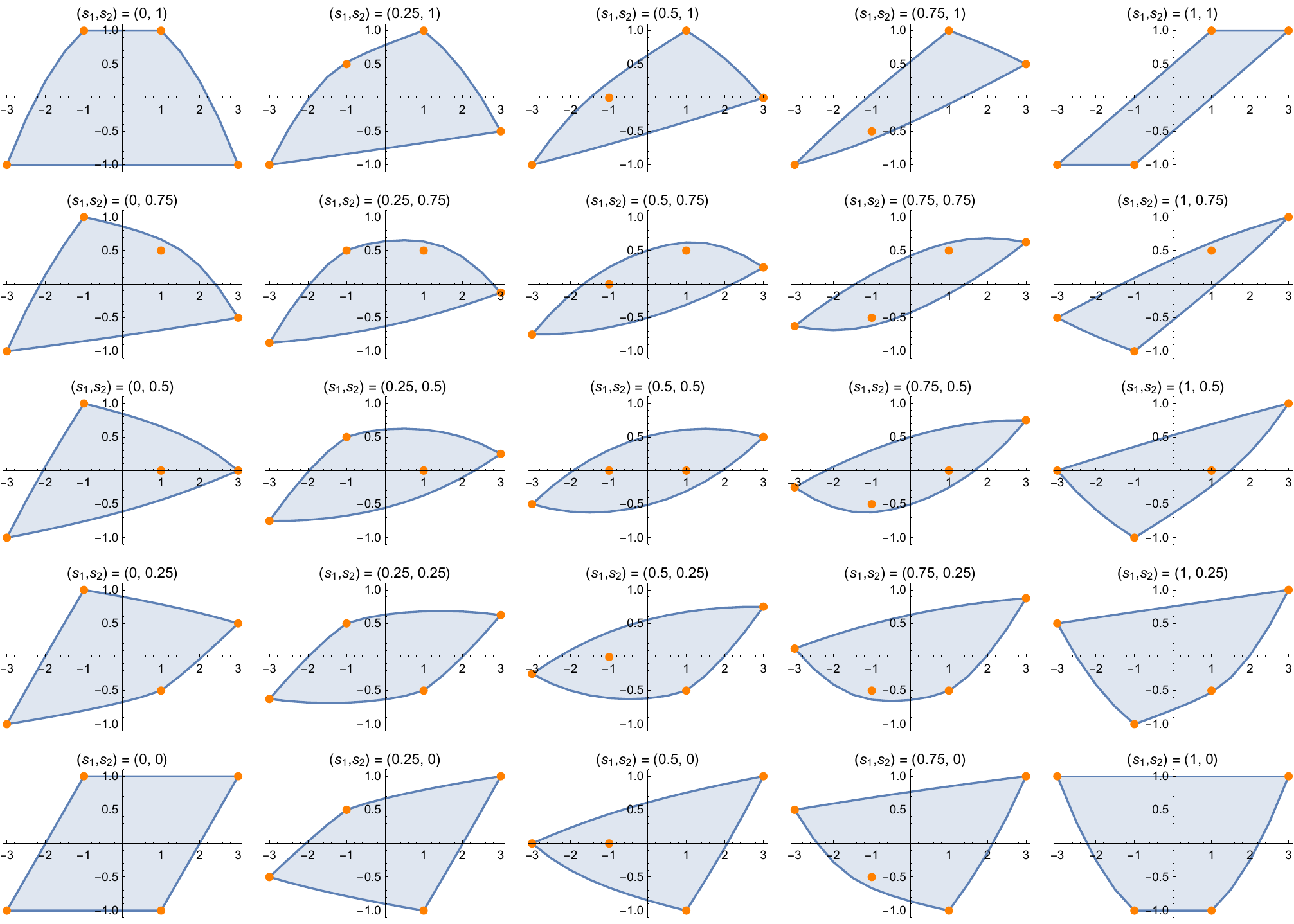}
\caption{An image of the momentum map $(J_{(1,2)}, H_{(s_1,s_2)})$ with the rank 0 points marked for varying values of $s_1, s_2\in [0,1]$.
Notice that the coupled angular momenta system shown in Figure~\ref{fig_coupledangular}
is the bottom row of the system shown in this figure since the coupled angular momenta is the special
case for which $s_2=0$.}
\label{fig_array}
\end{figure}

\begin{figure}[h]
\centering
    \begin{subfigure}[b]{0.4\textwidth}
        \includegraphics[width=\textwidth]{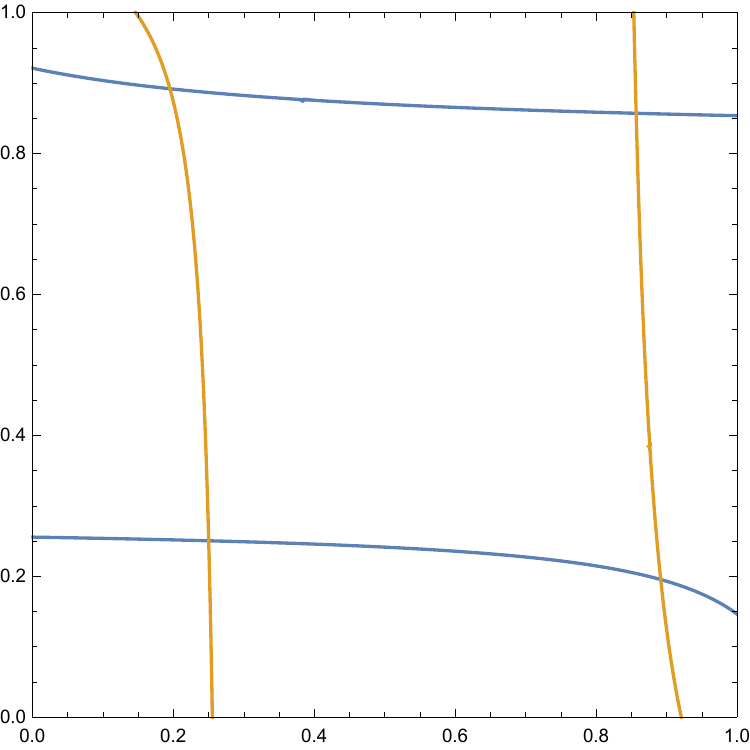}
    \end{subfigure}
    \qquad \quad
    \begin{subfigure}[b]{0.4\textwidth}
        \includegraphics[width=\textwidth]{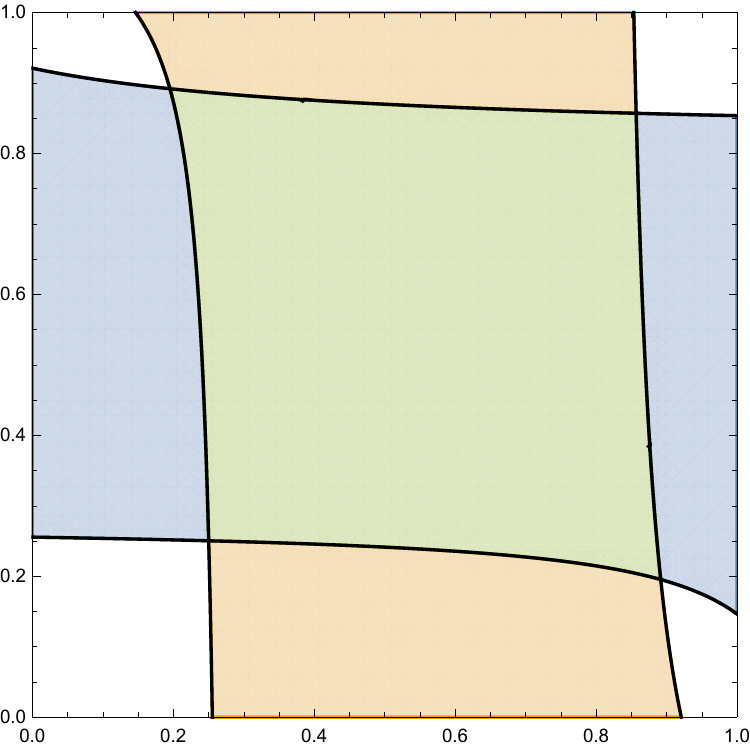}
    \end{subfigure}
 \caption{Left: a plot of the set $\gamma$, which is the union of $\gamma_{(S,N)}$ (blue)
 and $\gamma_{(N,S)}$ (orange), see Equation~\ref{def_gamma}. Right: Values of $(s_1, s_2)$ for which the system
$(J_{(1,2)}, H_{(s_1, s_2)})$ has focus-focus values at: only the point $(S,N)$ (blue),
only the point $(N,S)$ (orange), or at both points (green).  The system is degenerate 
on the black curves.
Compare with Figure~\ref{fig_array}.}
  \label{fig_FFs1s2}   
\end{figure}

Recently there has been a lot of activity relating to semitoric integrable systems, which we review briefly now.
There has been work regarding quantizations of semitoric integrable systems, specifically related to the
problem of recovering the classical system from the
quantum one (see for instance Le Floch $\&$ Pelayo $\&$ \vungoc~\cite{LFPeVN2016}). Hohloch $\&$ Sabatini $\&$ Sepe~\cite{HSS} answer the question of how the 
classification of semitoric systems is linked to Karshon's classification~\cite{karshon} of 
Hamiltonian $\mbS^1$-spaces. 
The question of lifting a Hamiltonian $\mbS^1$-action to a semitoric system is an ongoing project by Hohloch $\&$ Sabatini $\&$ Sepe $\&$ Symington and has been the topic of several conference talks.
There has been work to determine the convexity
of the momentum map image with respect to its intrinsic integral affine structure by Ratiu $\&$ Wacheux $\&$ Zung~\cite{RaWaZu2017}. 
Alonso $\&$ Dullin $\&$ Hohloch~\cite{AFDH-spin} are computing higher 
order terms of the Taylor series invariant of the focus-focus point in the coupled 
spin-oscillator (\refcoupledSpin\ of the present paper). 
Deformations of semitoric systems have been studied by endowing the moduli space
with a topology, see Palmer~\cite{PalmerJGP2017}. Kane $\&$ Palmer $\&$ Pelayo~\cite{KaPaPeSL2Z,KaPaPeminimal} used combinatorial methods to study blowups/downs and minimal models of semitoric systems. 
Generalizations of semitoric systems are achieved in Pelayo $\&$ Ratiu $\&$ \vungoc~\cite{PeRaVN2015} and 
Hohloch $\&$ Sabatini $\&$ Sepe $\&$ Symington~\cite{HSS-vertical}.
Additionally, work has begun to extend the theory of semitoric systems to
higher dimensional manifolds in Wacheux~\cite{Wacheux-thesis}.
Surgery techniques for semitoric systems are an ongoing project by Hohloch $\&$ Sabatini $\&$ Sepe $\&$ Symington.
Presently, hyperbolic singularities
are excluded from semitoric integrable systems, but Dullin $\&$ Pelayo~\cite{dullin-pelayo} have produced a smooth family of systems with transition from being semitoric to having a family of hyperbolic singular points.
A reader new to integrable systems can consult the surveys Pelayo $\&$ \vungoc~\cite{PVNsurvey} and Pelayo~\cite{Pesurvey},
or the books Marsden $\&$ Ratiu~\cite{MarsdenRatiu-book} and Cushman $\&$ Bates~\cite{CushmanBates-book}.

\subsubsection*{Structure of the article:} In Section~\ref{sec_intsystem_singpts} we review the required background, including
integrable systems, singular points, and semitoric integrable systems.  In Section~\ref{sec_familyofsystems}
we introduce the new system and prove \refthmIntro. In Section~\ref{sec_s1s2} we discuss the choice of parameters
for which the system can be seen as a linear combination of four systems of toric type, and
prove Theorem~\ref{thm_s1s2intro}.

\subsubsection*{Figures:} All figures and associated numerical computations in this article were made with the computer program {\em Mathematica}.


\section{Fundamental definitions}

\label{sec_intsystem_singpts}

In Section~\ref{sec_integrablesystemsintro} we introduce standard notions related to integrable systems
and non-degenerate points. A reader familiar with these topics can skip
directly to Section~\ref{sec_semitoricsystems}.

\subsection{Integrable systems and non-degenerate singular points}
\label{sec_integrablesystemsintro}

\subsubsection{Integrable systems}
Given a symplectic manifold $(M,\om)$ recall that associated to any function
$f\in C^\infty (M)$ there is a vector field denoted by $X^f$, called
the \emph{Hamiltonian vector field associated to $f$}, and defined by
\[
\om(X^f,\cdot) =- df(\cdot).
\]
Moreover, recall the Poisson bracket $\{\cdot, \cdot\}\colon C^\infty (M)\times C^\infty (M)\to C^\infty (M)$ given 
by $\{f,g\} = \om (X^f, X^g)$.
An \emph{integrable system} is a symplectic $2n$\--manifold $(M,\om)$ along with a collection
of functions $f_1, \ldots, f_n$ which Poisson commute (i.e.~$\{f_i,f_j\}=0$ for all $i,j$) and for which the associated Hamiltonian
vector fields $X^{f_1}, \ldots, X^{f_n}$ are linearly independent almost everywhere.
The function $F = (f_1, \ldots, f_n)\colon M\to\R^n$ is known as the
\emph{momentum map} of this system.

In this article, we will focus on the case $n=2$, so an integrable system
will be a $4$-dimensional symplectic manifold $(M,\om)$ with a function
$F\colon (M,\om)\to\R^2$ whose components $F=(J,H)$ are such that $\{J,H\}=0$ and
$X^J(p)$ and $X^H(p)$ are linearly independent for almost all $p\in M$.

The points at which linear independence of the components of the momentum
map fails are known as the \emph{singular points} of the system and the
\emph{rank} of a singular point is the rank of the differential
of the momentum map $dF$ at that point.  
There is a natural notion of non-degeneracy for such singular
points which we review now.
Rank 0
singular points are known as \emph{fixed points} since they are fixed under the
flow of the Hamiltonian vector fields of the components of the momentum map;
we will start with the classification of those.

\subsubsection{Rank 0 singular points, i.e., fixed points} Let $p\in M$ be a fixed point and
let $\mathcal{Q}(T_pM)$ denote the vector space of quadratic forms
on $T_pM$.
The symplectic form
on $M$ gives $\mathcal{Q}(T_p M)$
the structure of a Lie algebra
which is isomorphic to $\mathfrak{sp}(4,\R)$,
see Bolsinov $\&$ Fomenko~\cite{bolsinov-fomenko}.
Recall that a Cartan subalgebra is a nilpotent and self\--normalizing subalgebra.

\begin{definition}
\label{def_nondegenerate}
A fixed point $p\in M$ is \emph{non-degenerate} if the Hessians
$d^2J(p)$ and $d^2H(p)$ span a Cartan subalgebra
of the Lie algebra of quadratic forms on $T_pM$.
\end{definition}

In practice, this condition can be checked by use of the following lemma.

\begin{lemma}[Bolsinov $\&$ Fomenko~\cite{bolsinov-fomenko}]
\label{nondegCriterium}
\label{nondegBF}
 Let $p\in M$ be a fixed point. 
 Denote by $\om_p$ the matrix of the symplectic form with respect to a
 basis of $T_p M$ and let $d^2J$ and $d^2H$ denote the matrices of the Hessians
 of $J$ and $H$ with respect to the same basis.
 Then $p$ is non-degenerate if and only if $d^2J$ and
 $d^2H$ are linearly independent and there exists a linear
 combination of $\om_p^{-1}d^2 J$ and $\om_p^{-1}d^2 H$ which 
 has four distinct eigenvalues. 
\end{lemma}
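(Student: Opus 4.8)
The plan is to translate the Lie-theoretic condition of Definition~\ref{def_nondegenerate} into the spectral condition of the lemma by means of the explicit isomorphism $\mcQ(T_pM)\cong\mathfrak{sp}(4,\R)$. First I would recall that a quadratic form $q$ on $T_pM$ with symmetric matrix $A_q$ corresponds to the linear Hamiltonian vector field with matrix $\om_p^{-1}A_q$, and that the assignment $q\mapsto\om_p^{-1}A_q$ is precisely the Lie algebra isomorphism $\mcQ(T_pM)\xrightarrow{\sim}\mathfrak{sp}(4,\R)$ intertwining the Poisson bracket of quadratic forms with the matrix commutator. Under this map the subspace spanned by $d^2J(p)$ and $d^2H(p)$ is sent to the span of $A:=\om_p^{-1}d^2J$ and $B:=\om_p^{-1}d^2H$, and $d^2J,d^2H$ are linearly independent exactly when $A,B$ are. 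Thus $p$ is non-degenerate precisely when $\mfa:=\Span\{A,B\}$ is a Cartan subalgebra of $\mathfrak{sp}(4,\R)$.

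Next I would observe that $\mfa$ is always abelian: since $\{J,H\}\equiv 0$ and $p$ is a common critical point, the quadratic parts of $J$ and $H$ Poisson commute (the quadratic part of $\{J,H\}$, computed with the constant structure $\om_p^{-1}$, vanishes), which under the isomorphism reads $[A,B]=0$. Hence the whole content of the lemma reduces to the purely Lie-theoretic claim: a two-dimensional abelian subalgebra $\mfa\subseteq\mathfrak{sp}(4,\R)$ is a Cartan subalgebra if and only if it contains an element with four distinct eigenvalues. Together with the linear independence of $A$ and $B$ (which is what guarantees $\dim\mfa=2$ in the first place), this is exactly the stated criterion.

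To prove the claim I would exploit that $\mathfrak{sp}(4,\R)$ has rank $2$. For the direction ($\Leftarrow$), if $C=\lambda A+\mu B\in\mfa$ has four distinct eigenvalues then, since every $X\in\mathfrak{sp}(4,\R)$ has spectrum of the form $\{\pm\lambda_1,\pm\lambda_2\}$, the element $C$ is regular semisimple; its centralizer $\mfz(C)$ is therefore a Cartan subalgebra of dimension $2$. As $\mfa$ is abelian and contains $C$, we get $\mfa\subseteq\mfz(C)$, and equality of dimensions forces $\mfa=\mfz(C)$, a Cartan subalgebra. For the direction ($\Rightarrow$), I would use that the regular elements of a Cartan subalgebra are the complement of the finitely many root hyperplanes $\{\alpha=0\}$, hence form a dense open set; any regular $C\in\mfa$ satisfies $\alpha(C)\neq0$ for the roots $\pm2e_1,\pm2e_2,\pm(e_1\pm e_2)$ of $C_2$, which translates precisely into $\lambda_1,\lambda_2\neq0$ and $\lambda_1\neq\pm\lambda_2$, i.e.\ into four distinct eigenvalues.

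The main obstacle is this last paragraph: making rigorous the dictionary between the abstract notion of Cartan subalgebra (nilpotent and self-normalizing) invoked in Definition~\ref{def_nondegenerate} and the concrete spectral picture, in the real non-compact form $\mathfrak{sp}(4,\R)$ where Cartan subalgebras need not be conjugate. The key technical inputs are that a regular semisimple element has a two-dimensional centralizer which is automatically self-normalizing, and that ``four distinct eigenvalues in the standard representation'' is equivalent to ``regular'' for the root system $C_2$; once these are in place the equivalence of conditions is immediate, and everything else -- the isomorphism and the commutation $[A,B]=0$ -- is routine bookkeeping with the constant symplectic structure $\om_p$.
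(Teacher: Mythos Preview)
Your proposal is correct and follows essentially the same route as the paper's sketch: identify $\mcQ(T_pM)$ with $\mathfrak{sp}(4,\R)$, use $\{J,H\}=0$ to see that the span of $\om_p^{-1}d^2J$ and $\om_p^{-1}d^2H$ is abelian, and then invoke the characterization of Cartan subalgebras as two-dimensional abelian subalgebras containing a regular element (equivalently, an element with four distinct eigenvalues). Your version is somewhat more thorough---you treat both implications explicitly and spell out the $C_2$ root-system translation of regularity---whereas the paper's sketch emphasizes only the ``if'' direction, but the underlying argument is the same.
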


\begin{proof}[Sketch of proof]
The result follows from the fact that an abelian subalgebra of $\mathfrak{sp}(4,\R)$ is a Cartan subalgebra if and only
if it is two dimensional and contains a regular element, in which case it is the centralizer for this regular element.
The span of $\om_p^{-1}d^2 J$ and $\om_p^{-1}d^2 H$
is an abelian subspace of $\mathfrak{sp}(T_p M) \cong \mathfrak{sp}(4,\R) \cong \mathcal{Q}(T_pM)$ because 
$J$ and $H$ Poisson commute (since they form an integrable system)
and a regular element is any matrix with four distinct eigenvalues.
We conclude that if $\om_p^{-1}d^2 J$ and $\om_p^{-1}d^2 H$ are linearly independent and
their span includes an element with four eigenvalues then the span
is a two\--dimensional abelian subalgebra which contains a regular element, and
is thus Cartan.
\end{proof}

\subsubsection{Rank 1 singular points} 
To define rank 1 non-degenerate singular points we will again follow Bolsinov $\&$ Fomenko~\cite[Section 1.8.2]{bolsinov-fomenko}.
Suppose that $p$ is a singular point of rank 1 in a 4\--dimensional integrable system $(M,\om,F=(J,H))$.
Then there exists some $\mu,\lam\in\mathbb{R}$ such that $\mu d H + \lam dJ = 0$ 
at $p$ and the $\mathbb{R}^2$\--action
defined by flowing along the vector fields of $J$ and $H$ has a one-dimensional orbit through $p$.
Let $L\subset T_pM$ be the tangent line of this orbit at $p$ and let $L'$ be the symplectic orthogonal complement to $L$.
Notice that $L\subset L'$ and since $J$ and $H$ Poisson commute they are invariant under the $\R^2$\--action
and thus the operator $\mu d^2H + \lam d^2J$
descends to the quotient $L'/L$.

\begin{definition}[Bolsinov $\&$ Fomenko~\cite{bolsinov-fomenko}]
 The rank 1 critical point $p$ is \emph{non-degenerate} if $\mu d^2H + \lam d^2J$
 is invertible on $L'/L$.
\end{definition}

\noindent
Now suppose that the flow of $X^J$ is periodic.  Recall that the symplectic quotient of $M$ by $J$ at the level $c$,
which we denote $M\sslash \mbS^1$, is the symplectic manifold $J^{-1}(c)/{\mbS^1}$	
where the ${\mbS^1}$\--action on $J^{-1}(c)$ is the one which comes from the flow of the Hamiltonian vector
field of $J$.

\begin{lemma}
\label{lem_defnondeg_rank1}
If $p\in M$ is a rank 1 singular point such that $dJ\neq 0$ then $p$ is non-degenerate if and 
 only if $d^2H$ is invertible at the image of $p$ in the symplectic quotient of $M$ by $J$ at the level $J(p)$.
\end{lemma}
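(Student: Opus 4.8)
The plan is to identify the reduced Hamiltonian on the symplectic quotient and to match its Hessian at $\bar p:=[p]$ with the Bolsinov--Fomenko operator on $L'/L$. First I would set up the reduction. Since $dJ\neq 0$ at $p$, the value $c:=J(p)$ is a regular value of $J$ near $p$, so $J^{-1}(c)$ is a smooth hypersurface there; moreover $X^J(p)\neq 0$, so the $\mbS^1$\--action generated by the flow of $X^J$ is locally free near $p$ and $M\sslash\mbS^1=J^{-1}(c)/\mbS^1$ is a smooth symplectic surface (orbifold) near $\bar p$. As $\{J,H\}=0$, the function $H$ is invariant under the flow of $X^J$ and so descends to a function $\bar H$ on $M\sslash\mbS^1$. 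I would then record the tangent identifications: with $L=\R\, X^J(p)$ the orbit line, its symplectic orthogonal is $L'=L^{\om}=\ker dJ(p)=T_pJ^{-1}(c)$, and the Marsden--Weinstein isomorphism identifies $T_{\bar p}(M\sslash\mbS^1)$ with $L'/L$ as symplectic vector spaces.

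Next I would reduce the statement to a single Hessian. Because $dJ(p)\neq 0$, the relation $\mu\,dH+\lam\,dJ=0$ forces $\mu\neq 0$, so, setting $a:=-\lam/\mu$, we get $d(H-aJ)(p)=0$ and
\[
\mu\, d^2H(p)+\lam\, d^2J(p)=\mu\, d^2 g(p),\qquad g:=H-aJ.
\]
Since $\mu\neq 0$, the Bolsinov--Fomenko operator is invertible on $L'/L$ exactly when $d^2 g(p)$ is. On the other hand, on $J^{-1}(c)$ one has $g=H-ac$, so $g$ represents $\bar H-ac$ on the quotient; in particular $\bar p$ is a critical point of $\bar H$. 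Thus the content of the lemma is the equality of $d^2 g(p)$ on $L'/L$ with the Hessian $d^2\bar H(\bar p)$.

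The heart of the argument is a short computation showing that the orbit direction lies in the kernel of $d^2 g(p)$. Invariance of $g$ gives $X^J g\equiv 0$, and since $p$ is a critical point of $g$ its Hessian is symmetric, so for every $v\in T_pM$,
\[
d^2 g(p)\big(X^J(p),v\big)=v\big(X^J g\big)\big|_p=0 .
\]
Hence $L\subset\ker d^2 g(p)$, and as $L\subset L'$ the restriction $d^2 g(p)|_{L'}$ descends to a quadratic form on $L'/L$. Writing $\pi\colon J^{-1}(c)\to M\sslash\mbS^1$ for the quotient submersion, one has $\bar H\circ\pi=g|_{J^{-1}(c)}+ac$, so the chain rule for Hessians at a critical point of the target yields, for $u,v\in L'=T_pJ^{-1}(c)$,
\[
d^2\big(g|_{J^{-1}(c)}\big)(p)(u,v)=d^2\bar H(\bar p)\big(d\pi(u),d\pi(v)\big),
\]
where $d\pi\colon L'\to L'/L$ is precisely the quotient projection. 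Therefore the descent of $d^2 g(p)|_{L'}$ is exactly $d^2\bar H(\bar p)$ under $T_{\bar p}(M\sslash\mbS^1)\cong L'/L$, and reading off non\--degeneracy gives that $p$ is non\--degenerate if and only if $d^2\bar H(\bar p)$ is invertible, as claimed.

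I expect the main obstacle to be bookkeeping rather than a deep idea: one must verify the Hessian chain rule through the quotient submersion and, crucially, check that the symplectic identification $T_{\bar p}(M\sslash\mbS^1)\cong L'/L$ is the standard reduced one, so that ``invertibility'' of the quadratic form (equivalently of the operator obtained by contracting with the symplectic form) is transported correctly between the two sides. A minor point to dispatch is the possibly nontrivial but finite stabiliser of the $\mbS^1$\--action at $p$; this is handled by passing to a local slice, where the reduced space is an honest symplectic $2$\--manifold and all the above computations are unaffected.
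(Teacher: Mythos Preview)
Your proposal is correct and follows essentially the same route as the paper: identify $L=\R\,X^J(p)$, $L'=\ker dJ(p)=T_pJ^{-1}(c)$, and hence $L'/L\cong T_{\bar p}(M\sslash\mbS^1)$, then match the Bolsinov--Fomenko operator with the Hessian of the reduced Hamiltonian. The paper's proof stops after the tangent-space identification and simply says ``the result follows,'' whereas you carefully fill in the remaining steps (in particular that $\mu\neq 0$, that $X^J(p)\in\ker d^2g(p)$, and the chain-rule identification of Hessians through $\pi$), so your version is a strictly more detailed execution of the same argument.
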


\begin{proof}
 Let $L$ and $L'$ be as above and let $c=J(p)$.  Notice that $dJ\neq 0$ 
 and $\mathrm{dim}(L)=1$ implies that $L$ is spanned by $X^J$.
 Thus $v\in L'$ if and only if $\om_p(v, X^J) = 0$.  By the definition of the Hamiltonian vector field 
 this is equivalent to $v(J)=0$, so $v\in T_p(J^{-1}(c))$. Thus $L' = T_p(J^{-1}(c))$. Furthermore, $L$ is the tangent
 space to the orbit of the ${\mbS^1}$\--action through $p$ so $L'/L = T_p(J^{-1}(c)/{\mbS^1})$ and the result follows.
\end{proof}

\noindent
Lemma~\ref{lem_defnondeg_rank1} implies the following. 

\begin{corollary}
\label{morse}
If $dJ\neq0$ at all points of nonzero rank then
all rank 1 points of $(M,\om,F=(J,H))$ are non-degenerate if and only if 
$H$ descends to a Morse function on all possible symplectic quotients by $J$.
\end{corollary}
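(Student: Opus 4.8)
The plan is to read Corollary~\ref{morse} as the global, Morse\--theoretic repackaging of the pointwise criterion in Lemma~\ref{lem_defnondeg_rank1}. Concretely, I would argue in three steps: first, show that $H$ genuinely descends to a function on each symplectic reduction by $J$; second, set up a bijection between the rank~$1$ singular points of $(M,\om,F)$ and the critical points of these descended functions; and third, invoke Lemma~\ref{lem_defnondeg_rank1} to match non\--degeneracy of a rank~$1$ point with non\--degeneracy of the corresponding critical point, so that the statement ``all rank~$1$ points non\--degenerate'' becomes exactly ``every descended function is Morse.''

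For the descent, note that since $\{J,H\}=0$ the function $H$ is invariant under the flow of $X^J$, hence under the $\mbS^1$\--action on each level set $J^{-1}(c)$; therefore $H$ passes to a smooth function $\bar{H}$ on the reduction $J^{-1}(c)/\mbS^1$, which is a smooth symplectic manifold wherever the $\mbS^1$\--action is locally free. The key identification is the following. Writing $\pi\colon J^{-1}(c)\to J^{-1}(c)/\mbS^1$ for the projection, a point $\pi(p)$ is critical for $\bar H$ precisely when $dH$ vanishes on $T_p J^{-1}(c)=\ker dJ_p$. Since $dJ_p\neq0$ at a nonzero\--rank point by hypothesis, this is equivalent to $dH_p=\lam\,dJ_p$ for some $\lam\in\R$, which in turn says exactly that the two rows of $dF_p=(dJ_p,dH_p)$ are linearly dependent, i.e.\ that $p$ is a rank~$1$ singular point. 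This gives the desired correspondence, in both directions, between rank~$1$ singular points in $J^{-1}(c)$ and critical points of $\bar H$ on the reduction at level $c$. (Note that rank~$0$ points cannot contribute here: they have $dJ=0$ and so lie only over singular values of $J$.)

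It then remains to match the two notions of non\--degeneracy, and this is where the analytic content is supplied entirely by the preceding lemma. By Lemma~\ref{lem_defnondeg_rank1}, such a rank~$1$ point $p$ is non\--degenerate if and only if $d^2H$ is invertible at $\pi(p)$ in the reduction; but invertibility of this Hessian at a critical point of $\bar H$ is by definition the statement that $\pi(p)$ is a non\--degenerate critical point of $\bar H$. Ranging over all levels, every rank~$1$ singular point is non\--degenerate if and only if each descended function $\bar H$ has only non\--degenerate critical points, i.e.\ is Morse, which is the claim.

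I expect the one genuine subtlety to lie in interpreting ``all possible symplectic quotients'' so that the correspondence is exhaustive, rather than in any computation. A rank~$1$ point $p$ always has $dJ_p\neq0$, so $p$ is a regular point of $J$ and a neighbourhood of $p$ in $J^{-1}(J(p))$ reduces to a smooth symplectic manifold on which $\bar H$ and its Hessian are defined; however, the level $J(p)$ need not be a globally regular value of $J$, since a rank~$0$ point could share that $J$\--level, in which case the full reduction $J^{-1}(J(p))/\mbS^1$ is singular elsewhere. The argument above is purely local near $\pi(p)$, so it still applies, and the statement should be read accordingly: every rank~$1$ point is non\--degenerate if and only if $\bar H$ is Morse on each reduction, understood on its smooth locus. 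Making this reading precise, and verifying that no rank~$1$ point is missed and no spurious critical point is introduced, is the main bookkeeping to carry out; I would also record that it is precisely the periodicity of the flow of $X^J$ that makes the relevant objects the $\mbS^1$\--reductions $J^{-1}(c)/\mbS^1$.
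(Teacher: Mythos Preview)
Your proposal is correct and takes essentially the same approach as the paper: both treat the corollary as a direct consequence of Lemma~\ref{lem_defnondeg_rank1}. The paper in fact gives no separate proof at all, simply stating that ``Lemma~\ref{lem_defnondeg_rank1} implies the following,'' whereas you have spelled out the descent of $H$, the correspondence between rank~$1$ points and critical points of $\bar H$, and the local nature of the argument near possibly singular reductions---all of which the paper leaves implicit.
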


\noindent
See Bolsinov $\&$ Fomenko~\cite{bolsinov-fomenko} for a description of non-degenerate points for dimensions greater than four
and a description of rank 1 non-degenerate points in terms of Cartan subalgebras.

\subsubsection{Classification of non-degenerate points}
Williamson~\cite{Williamson1936} classified Cartan subalgebras of $\mathfrak{sp}(n,\R)$, 
which in turn implies a classification of the possible subalgebras $\mathfrak{c}$ generated
by the Hessians in $T_pM \cong \mathfrak{sp}(n,\R)$ at a non-degenerate singular points.
Eliasson~\cite{Eliasson-thesis} and Miranda $\&$ Zung~\cite{miranda-zung} extended Williamson's pointwise classification to a local
classification, which classifies the possible forms of the momentum map 
in local symplectic coordinates around a fixed point $p$, often known as the \emph{Eliasson-Miranda-Zung normal form}.

\begin{theorem}[Eliasson~\cite{Eliasson-thesis}, Miranda $\&$ Zung~\cite{miranda-zung}]
\label{thmEliasson}
 If $p\in M$ is a non-degenerate singular point of an $n$\--dimensional
 integrable system $(M,\om,F)$ then there exist local symplectic coordinates
 $(x,y):=(x_1, \ldots, x_n, y_1, \ldots, y_n)$ around $p$ such that there exist
 $q_1, \ldots, q_n\colon M\to\R$ where each $q_i$ is given by one of:
 \begin{enumerate}[nosep]
  \item elliptic: $q_i(x,y) = \frac{1}{2}(x_i^2+y_i^2)$,
  \item hyperbolic: $q_i (x,y)= x_iy_i$,
  \item focus-focus: $\begin{cases}  q_i (x,y)  = x_iy_{i+1}-x_{i+1}y_i,\\ q_{i+1}(x,y)  = x_iy_i+x_{i+1}y_{i+1},\end{cases}$
  \item non-singular: $q_i (x,y)= y_i$,
 \end{enumerate}
 such that $\{f_i, q_j\}=0$ for all $i,j$.
\end{theorem}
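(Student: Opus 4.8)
The plan is to prove the theorem in three stages --- a pointwise (infinitesimal) classification, a formal power-series normal form, and its smooth realization --- arranged so that the model functions $q_1, \ldots, q_n$ mutually Poisson commute; once each $f_i$ is expressed through the $q_j$, the compatibility relation $\{f_i, q_j\} = 0$ is then automatic. For the linear model, let $k$ be the rank of $dF(p)$. Using the constant-rank (Darboux--Carath\'eodory) theorem I would first straighten the $k$ independent differentials, producing symplectic coordinates in which $k$ of the model functions are of non-singular type $q_i = y_i$ and the system reduces, along the symplectic-orthogonal complement, to a rank-$0$ point of a reduced integrable system in dimension $2(n-k)$. At that fixed point the Hessians of the reduced components Poisson commute and, by non-degeneracy, span a Cartan subalgebra $\mathfrak{c}$ of $\mathcal{Q}(T_pM) \cong \mathfrak{sp}(2(n-k), \R)$. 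Williamson's classification of such subalgebras decomposes the transverse directions into a symplectic-orthogonal sum of elliptic, hyperbolic, and focus-focus blocks, giving a basis $q_i$ of the three remaining forms and proving the theorem at the level of $2$-jets.

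\emph{Formal normal form.} Writing the sought coordinate change as a symplectomorphism $\phi$ fixing $p$ and tangent to the identity, I would seek $\phi$ so that each $f_i \circ \phi$ is a formal power series in $q_1, \ldots, q_n$, equivalently so that $\{q_j, f_i \circ \phi\} = 0$. Constructing $\phi$ as a composition of time-one Hamiltonian flows and normalizing degree by degree reduces the problem, at each order, to a homological equation $\{Q, G\} = R$, where $Q = \sum_j \lambda_j q_j$ is a regular element of $\mathfrak{c}$, the term $R$ is the known obstruction of that degree, and $G$ is the unknown generating polynomial. The decisive algebraic fact is that on each finite-dimensional space of homogeneous polynomials the operator $\{Q, \cdot\}$ has kernel equal to the polynomials Poisson-commuting with $Q$ --- precisely the polynomials in the $q_j$ --- and is invertible on a complement. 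Checking this amounts to diagonalizing the adjoint action of $Q$, which is immediate for the elliptic and hyperbolic blocks and requires tracking the mixed (complex) weights in the focus-focus block.

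\emph{Smooth realization.} A Borel-summation argument realizes the formal $\phi$ by an actual smooth symplectomorphism, after which $F$ agrees with a function of $(q_1, \ldots, q_n)$ up to a remainder that is flat at $p$. To remove this flat remainder I would run a Moser-type deformation: interpolate between the normalized momentum map and its flat perturbation, and trivialize the interpolation by solving, at each time, a genuinely smooth (non-formal) homological equation. The essential analytic input here is a division/preparation theorem --- Malgrange's preparation theorem in the smooth category --- guaranteeing that a flat right-hand side lies in the image of the homological operator with smooth solutions. Carrying this out equivariantly with respect to the local model action generated by the $q_j$ produces the final coordinates and the relation $\{f_i, q_j\} = 0$.

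I expect the smooth realization to be the main obstacle, and within it the focus-focus block is the genuinely hard case. The elliptic, hyperbolic, and non-singular blocks reduce, via the classical Morse and isochore Morse lemmas and the linearization of the elliptic $S^1$-actions, to well-understood results; but the focus-focus singularity carries nontrivial monodromy, and its smooth linearization requires the delicate interplay of the deformation argument with the preparation theorem. This is precisely the point at which Eliasson's original proof needed later clarification, and a complete argument must handle it with care.
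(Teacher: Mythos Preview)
The paper does not prove this theorem at all: it is stated as background with attributions to Eliasson and to Miranda--Zung, and then used as a black box in the classification of non-degenerate singular points. There is therefore no ``paper's own proof'' to compare against.

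That said, your outline is a fair high-level sketch of how the proof actually goes in the cited sources: Williamson's pointwise classification, a Birkhoff-type formal normal form obtained by solving homological equations degree by degree, and then a smooth realization combining a Borel argument with a Moser deformation and division/preparation results to kill the flat remainder. You are also right that the focus-focus block is where the real difficulty lies and where Eliasson's original argument required later clarification. If you were asked to supply an actual proof rather than a plan, the formal step would need the precise spectral computation for $\{Q,\cdot\}$ on homogeneous polynomials (including the non-semisimple behavior in the hyperbolic and focus-focus cases), and the smooth step would require genuine estimates, not just an appeal to Malgrange; but as a roadmap your proposal is accurate.
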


The classification of a non-degenerate singular point can be detected
by computing the eigenvalues of any associated regular element.

\begin{proposition}[\vungoc~{\cite[Chapter 3]{San-book}}]\label{prop_eigenvalues}
If $A$ is a regular element in the Cartan subalgebra generated by the Hessians of the components of
the momentum map (i.e., $A$ has $2n$ distinct eigenvalues) at a fixed point then the eigenvalues of $A$ come in three distinct types of groups:\\[-12pt]
\begin{enumerate}[nosep]
\item a pair of imaginary roots $\pm \mathrm{i}\beta$, called an elliptic block,
\item a pair of real roots $\pm \alpha$, called a hyperbolic block,
\item a quadruple of complex roots $\pm \alpha \pm \mathrm{i}\beta$, called a focus-focus block,\\[-12pt]
\end{enumerate}
where $\alpha,\beta\in\R$.
\end{proposition}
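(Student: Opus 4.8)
The plan is to extract the eigenvalue pattern from two spectral symmetries that any regular element of the Cartan subalgebra must satisfy, and then to read off the three block types by a short orbit analysis. Recall from the set-up preceding \refnondegBF\ that, after identifying $\mathcal Q(T_pM)$ with $\mathfrak{sp}(2n,\R)$, the generators of the Cartan subalgebra have the form $\om_p^{-1}d^2 f$ with $d^2 f$ a symmetric (Hessian) matrix and $\om_p$ the invertible skew-symmetric matrix of the symplectic form. Hence the regular element $A$ is a real $2n\times 2n$ matrix lying in $\mathfrak{sp}(2n,\R)$, that is, $A^T\om_p+\om_p A=0$.

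First I would record the two symmetries of $\Spec(A)\subset\C$. Because $A$ is a real matrix, its characteristic polynomial $p(\lambda)=\det(A-\lambda\Id)$ has real coefficients, so $\Spec(A)$ is closed under complex conjugation $\lambda\mapsto\overline\lambda$. Because $A\in\mathfrak{sp}(2n,\R)$, the relation $A^T=-\om_p A\,\om_p^{-1}$ exhibits $A^T$ as conjugate to $-A$; since $A$ and $A^T$ share a characteristic polynomial, it follows that $p(\lambda)=p(-\lambda)$, so $p$ is an even polynomial and $\Spec(A)$ is closed under negation $\lambda\mapsto-\lambda$.

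Next I would let the Klein four-group generated by $\lambda\mapsto-\lambda$ and $\lambda\mapsto\overline\lambda$ act on $\Spec(A)$ and classify its orbits, using regularity to ensure the $2n$ eigenvalues are pairwise distinct. The value $0$ cannot occur: it would form a singleton orbit, leaving $2n-1$ distinct eigenvalues to be partitioned into orbits of even cardinality (every nonzero $\lambda$ is distinct from $-\lambda$), which is impossible since $2n-1$ is odd. For $\lambda\ne0$ there are then exactly three orbit types: a nonzero real $\alpha$ produces the two-element orbit $\{\pm\alpha\}$, a hyperbolic block; a nonzero purely imaginary $\mathrm{i}\beta$ produces $\{\pm\mathrm{i}\beta\}$, an elliptic block; and a genuinely complex $\alpha+\mathrm{i}\beta$ with $\alpha,\beta\ne0$ produces the four-element orbit $\{\pm\alpha\pm\mathrm{i}\beta\}$, a focus-focus block. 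These orbits partition $\Spec(A)$ into disjoint blocks of precisely the three stated forms, which is the assertion.

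The argument is essentially a two-line computation once the framework is in place, so there is no serious obstacle; the only points demanding care — and thus what I would treat as the crux — are verifying that the symplectic condition genuinely forces the characteristic polynomial to be even (via $A^T\sim-A$), and that regularity simultaneously excludes the eigenvalue $0$ and guarantees each block appears with distinct entries, so that no two blocks collide on the real or the imaginary axis.
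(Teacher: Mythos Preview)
Your argument is correct and is the standard one: the two spectral symmetries $\lambda\mapsto\overline\lambda$ (from reality of $A$) and $\lambda\mapsto-\lambda$ (from $A\in\mathfrak{sp}(2n,\R)$, via $A^T=-\om_p A\,\om_p^{-1}$) generate a Klein four-group action on $\Spec(A)$, and regularity forces the orbits to be exactly the three block types listed, with $0$ excluded by the parity count you give.

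There is nothing to compare against in the paper itself: this proposition is stated as a citation to \vungoc~\cite[Chapter 3]{San-book} and is not proved here. Your write-up supplies precisely the short argument one finds in that reference, so it is entirely in line with what the paper relies on.
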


The types of the groups of eigenvalues of $A$ 
agree with the classification of the Cartan subalgebra in \refthmEliasson.
Thus they do not depend on the choice
of the regular element $A$, they only depend on the Cartan subalgebra.

\subsubsection{Degenerate points}

Changing the integrable system on a fixed symplectic manifold
cannot cause a rank 0 point to transition from being focus-focus type
to being elliptic-elliptic type without passing through a degeneracy.

\begin{proposition}\label{prop_degenerate}
 Fix a $4$\--dimensional symplectic manifold $(M,\om)$.
 Let $t_0\in\R$ and let $J_t, H_t\colon M\to \R$ be smooth functions which depend
 smoothly on $t\in\R$.
 Suppose $(J_t,H_t)$ is an integrable system for all $t\in\R$ in an open interval around $t_0$
 and $p\in M$ is a rank 0 fixed point of $(J_t,H_t)$ for all $t\in\R$, which is of type elliptic-elliptic
 for $t>t_0$ and type focus-focus for $t<t_0$.
 Then $(J_{t_0},H_{t_0})$ has a degenerate fixed point at $p$.
\end{proposition}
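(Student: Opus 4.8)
The plan is to argue by contradiction: assume that $p$ is a \emph{non-degenerate} fixed point of $(J_{t_0},H_{t_0})$ and derive a contradiction from the fact that the Williamson type jumps from elliptic-elliptic to focus-focus across $t_0$. First I would fix a symplectic basis of $T_pM$, so that $\om_p$ is a constant matrix, and for each $t$ set $A_t := \om_p^{-1}d^2J_t(p)$ and $B_t := \om_p^{-1}d^2H_t(p)$. These are infinitesimally symplectic (they represent elements of $\mathfrak{sp}(4,\R)$ under the identification recalled before Definition~\ref{def_nondegenerate}) and depend smoothly on $t$. By the contradiction hypothesis and Lemma~\ref{nondegBF}, there are constants $a,b\in\R$ for which $X_{t_0}:=a A_{t_0}+b B_{t_0}$ has four distinct eigenvalues; I would then study the smooth family $X_t:=aA_t+bB_t$ obtained by freezing $a,b$.

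The first ingredient is a normal form for the characteristic polynomial. Since $X_t\in\mathfrak{sp}(4,\R)$, its spectrum is invariant under $\lambda\mapsto-\lambda$, so $\chi_{X_t}$ is even:
\[
\chi_{X_t}(\lambda)=\lambda^4+c_1(t)\,\lambda^2+c_2(t),
\]
with $c_1,c_2$ smooth in $t$. Writing $\mu=\lambda^2$ turns this into the quadratic $\mu^2+c_1(t)\mu+c_2(t)$, whose discriminant $\Delta(t):=c_1(t)^2-4c_2(t)$ is continuous. The point of this reduction is that, for a \emph{regular} element, $\Delta$ detects the type via Proposition~\ref{prop_eigenvalues}: an elliptic-elliptic regular element has eigenvalues $\pm\mathrm{i}\beta_1,\pm\mathrm{i}\beta_2$ with $\beta_1\neq\beta_2$ nonzero, hence distinct real $\mu$-roots $-\beta_1^2,-\beta_2^2$ and $\Delta>0$; a focus-focus regular element has eigenvalues $\pm\alpha\pm\mathrm{i}\beta$ with $\alpha,\beta\neq0$, hence the genuinely complex-conjugate $\mu$-roots $\alpha^2-\beta^2\pm2\mathrm{i}\alpha\beta$ and $\Delta=-16\alpha^2\beta^2<0$.

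The second ingredient is that having four distinct eigenvalues is an open condition, since the roots of a polynomial vary continuously with its coefficients. As $X_{t_0}$ has four distinct eigenvalues, so does $X_t$ for all $t$ in some interval $(t_0-\epsilon,t_0+\epsilon)$; in particular $\Delta(t)\neq0$ there, $p$ is non-degenerate for all such $t$, and $X_t$ is a regular element of the Cartan subalgebra $\mathrm{span}(A_t,B_t)$. Now I would invoke the hypothesis: for $t\in(t_0,t_0+\epsilon)$ the point is elliptic-elliptic, forcing $\Delta(t)>0$, while for $t\in(t_0-\epsilon,t_0)$ it is focus-focus, forcing $\Delta(t)<0$. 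But a continuous function that is nowhere zero on the connected interval $(t_0-\epsilon,t_0+\epsilon)$ cannot change sign, a contradiction. Hence $p$ must be degenerate for $(J_{t_0},H_{t_0})$.

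I expect the genuine content, and the step needing the most care, to be the first one: checking that $X_t$ really lies in $\mathfrak{sp}(4,\R)$ so that $\chi_{X_t}$ is even, and nailing down the exact dictionary between the sign of $\Delta$ and the two Williamson types through Proposition~\ref{prop_eigenvalues} (including that regularity forces $\beta_1\neq\beta_2$ in the elliptic-elliptic case and $\alpha,\beta\neq0$ in the focus-focus case, so that the two inequalities on $\Delta$ are strict). Once this dictionary is fixed, the openness of the four-distinct-eigenvalues condition closes the argument with no further computation.
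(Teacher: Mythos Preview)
Your proposal is correct and follows essentially the same route as the paper's proof: assume non-degeneracy at $t_0$, fix a linear combination with four distinct eigenvalues, note that the characteristic polynomial of the resulting $\mathfrak{sp}(4,\R)$ element is even so reduces to a quadratic in $\mu=\lambda^2$, and then derive a contradiction from the sign of its discriminant (positive for elliptic-elliptic, negative for focus-focus) together with continuity. The only cosmetic difference is that you justify the evenness of the characteristic polynomial via membership in $\mathfrak{sp}(4,\R)$, whereas the paper invokes the symmetry of the Hessian combination; these are equivalent observations.
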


\begin{proof}
Suppose that $p$ is a non-degenerate fixed point of $(J_{t_0},H_{t_0})$.
Then there exists some
$\gamma,\delta\in\R$ such that
$\omega^{-1}(\gamma d^2 H_{t_0}+\delta d^2 J_{t_0})$ has four distinct eigenvalues
at $p$.
Fix such $\gamma$ and $\delta$.
Since $\gamma d^2 H_{t}+\delta d^2 J_{t}$ is symmetric we see that the characteristic
polynomial of 
$\omega^{-1}(\gamma d^2 H_{t_0}+\delta d^2 J)$ is a constant multiple of a polynomial of the form
\[
 g_t (X) = X^4 + b_t X^2 + c_t
\]
where $b_t,c_t\in\R$ depend continuously on $t$. The zeros of $g_t$ are given by $\pm \sqrt{\kappa_\pm}$ where
\[
\kappa_\pm = \frac{-b_t\pm\sqrt{b_t^2-4c_t}}{2}
\]
and since there are four distinct eigenvalues when
$t=t_0$ we see that 
$b_{t_0}^2-4c_{t_0}\neq 0$.
Thus we see that $g_t$ has four distinct eigenvalues for all
$t$ in a neighborhood of $t_0$.
Since the Williamson type of a fixed point does not depend on the choice
of linear combination as long as one with four distinct eigenvalues is
chosen we see that $g_t$ has zeros of the form $\pm \mathrm{i}\alpha$, $\pm \mathrm{i}\beta$ for $t>t_0$ which means that $b_t^2-4 c_t>0$.
Similarly, we see that $g_t$ has zeros of the form $\alpha\pm\mathrm{i}\beta$
for $t<t_0$ which means that $b_t^2-4 c_t<0$.
Thus, since $b_t-4c_t$ varies continuously with $t$, we see that $b_{t_0}-4c_{t_0} =0$ contradicting our original claim.
\end{proof}

Similar arguments to the one in the proof of Proposition~\ref{prop_degenerate} are used
in Dullin-Pelayo~\cite{dullin-pelayo} and in particular in Figure 4 in that paper.

\begin{remark}\label{rmk_HH}
When a point changes between being of elliptic-elliptic and focus-focus
type it is undergoing what is known as the \emph{Hamiltonian-Hopf bifurcation}, see~\cite{Montaldi_notes}.
\end{remark}

We are grateful to Heinz Han\ss mann and James Montaldi for bringing to our attention the Hamiltonian-Hopf
bifurcation and informing us that our system is undergoing this transformation.

\subsection{Semitoric systems}
\label{sec_semitoricsystems}

Pelayo $\&$ \vungoc~\cite{PVNinventiones, PVNacta} extend the Delzant
classification of toric integrable systems by introducing and classifying what are now known
as semitoric systems.

\begin{definition}
 A \emph{semitoric system} is a integrable
 system of dimension four $(M,\om,(J,H))$ such that:
 \begin{enumerate}
  \item $J$ is proper,
  \item \label{item_2piperiod} the Hamiltonian flow of $J$ (i.e.~the flow of $X^{J}$) is periodic,
  \item all singular points of $(J,H)$ are non-degenerate and have no hyperbolic blocks.
 \end{enumerate}
 A semitoric system is \emph{simple} if there is at most one critical point in $J^{-1}(x)$
 for all $x\in\R$.
\end{definition}
\centerline{\bf Every semitoric system we consider in}
\centerline{\bf this article is a simple semitoric system.}
\smallskip
\noindent Note that $J$ is automatically proper in the case that $M$ is compact.
Concerning item~\eqref{item_2piperiod}, we may assume that $2\pi$ is the minimal period. 
Note that this means the flow of $X^J$ generates a faithful action of $\mbS^1=\R/2\pi\Z$.


If $(M,\om,(J,H))$ is a semitoric integrable system
and $p\in M$ is a rank zero singular point then there are exactly two possibilities for $p$: either $p$ is elliptic\--elliptic or focus\--focus.
Thus, if $A$ is a regular element in the associated Cartan subalgebra then
the eigenvalues of $A$ must either come in two pairs $\pm \mathrm{i}\alpha$, $\pm \mathrm{i}\beta$
in which case $p$ is elliptic-elliptic or come in one quadruple $\pm \alpha\pm\mathrm{i}\beta$ in which case
$p$ is focus-focus, where $\alpha,\beta\in\R$ in each case.  If $p$ is non-degenerate of rank 1
then it must be of elliptic type.

The Pelayo\--V\~{u} Ng\d{o}c classification of simple semitoric integrable systems is in terms of 
five invariants, which we briefly describe now:
\begin{enumerate}
 \item the \emph{number of focus-focus points invariant}: $m_f\in\Z_{\geq 0}$ denotes the number of focus-focus singular points (which is finite by \vungoc~\cite{VuNgoc07}),
 \item the \emph{semitoric polygon}: a family of polygons (analogous to the Delzant polygon of a toric integrable system) which
  encode information about the integral-affine structure of the system. Each element is the image of a toric momentum
  map defined on all of $M$ except certain subsets (which are the union of submanifolds of dimension at most three) related to the focus-focus points,
 \item the \emph{Taylor series invariant}: a Taylor series in two variables for each focus-focus point, which encodes the 
 dynamics of the flow of the Hamiltonian vector fields as they approach the focus-focus fiber
 (originally introduced and described in \vungoc~\cite{VuNgoc03}),
 \item the \emph{volume} or \emph{height invariant}: a real number for each focus-focus point which encodes the height of the focus-focus value in semitoric polygon,
 \item the \emph{twisting index}: an integer assigned to each focus-focus point for each element of the semitoric polygon, which encodes
  the relationship between the toric momentum map used to produce the element of the semitoric polygon and a preferred
  local momentum map around the focus-focus point.
\end{enumerate}
An abstract list of such datas is known as a \emph{list of semitoric ingredients}.
Given semitoric systems $(M_i,\om_i,(J_i,H_i))$ for $i=1, 2$ an \emph{isomorphism of semitoric systems} is a symplectomorphism $\Phi\colon M_1\to M_2$ such that
$\Phi^*(J_2,H_2) = (J_1, f(J_1,H_1))$ where $f\colon\R^2\to\R$ is a smooth function and $\del_y f>0$ everywhere.

\begin{theorem}[Classification by Pelayo $\&$ V\~{u} Ng\d{o}c~\cite{PVNinventiones, PVNacta}]
 The following hold:
 \begin{enumerate}
  \item Two simple semitoric systems are isomorphic if and only if they have the same five semitoric invariants,
  \item Given a list of semitoric ingredients there exists a simple semitoric system which has
   those as its five invariants.
 \end{enumerate}
\end{theorem}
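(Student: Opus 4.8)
The plan is to treat the two parts separately and, in both, to build global data out of the local normal forms already available from \refthmEliasson\ together with the finiteness of the focus-focus set. For the \emph{uniqueness} direction, part~(1), the ``only if'' implication amounts to checking that each of the five invariants is genuinely an invariant of the isomorphism class: that it does not depend on the auxiliary choices entering its construction, and that it is preserved by any isomorphism $\Phi$ with $\Phi^*(J_2,H_2)=(J_1,f(J_1,H_1))$, $\del_y f>0$. I would verify this invariant by invariant. The number $m_f$ is manifestly intrinsic. The semitoric polygon is well defined only up to the action of the group generated by the admissible choices of cut direction at each focus-focus value and the corresponding vertical shears, and I would show this ambiguity is exactly the equivalence built into the definition of the polygon invariant. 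The Taylor series, the height, and the twisting index would each be shown to transform within their prescribed equivalence classes under a change of choices and under $\Phi$.

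The substantive content is the ``if'' implication. Here I would first use that conditions (1)--(2) in the definition make $J$ a proper generator of an effective Hamiltonian $S^1$-action; on the complement of the finitely many focus-focus fibers the system is of toric type, so Arnold--Liouville produces action-angle coordinates and an integral affine structure on the set of regular values, which is encoded by the semitoric polygon together with the height invariant. Near each focus-focus fiber I would invoke the focus-focus case of \refthmEliasson\ to pass to a local model, and then use that the Taylor series invariant classifies a saturated neighborhood of the singular fiber up to fiber-preserving symplectomorphism. The heart of the argument is then the gluing: given two systems with identical invariants, one builds a symplectomorphism over the regular part matching polygons and heights, matches the semiglobal focus-focus models using equality of the Taylor series, and shows that the only remaining freedom in patching is measured by the twisting index, so that its equality forces the local isomorphisms to assemble into a single global symplectomorphism intertwining the momentum maps.

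For \emph{existence}, part~(2), the plan is constructive. Starting from an abstract list of ingredients, I would build a toric model over a chosen representative of the semitoric polygon, then insert the prescribed number of focus-focus singularities by a nodal-trade type symplectic surgery, arranging the local models so as to realize the prescribed Taylor series and height at each node, and finally adjusting the gluing to realize the prescribed twisting index. One then verifies that the resulting object is a genuine semitoric system---properness and periodicity of $J$ by construction, and nondegeneracy with no hyperbolic blocks via \refnondegBF\ and \refthmEliasson---and that its five computed invariants are exactly the given ones.

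The main obstacle in both directions is the monodromy bookkeeping around the focus-focus fibers: one must control precisely how the locally defined action-angle charts fail to patch across a focus-focus value (the source of the monodromy in the integral affine structure of $F(M)$) and show that the twisting index is exactly the obstruction whose equality, in the uniqueness direction, or whose prescription, in the existence direction, makes the global object exist. Carrying this semiglobal-to-global scheme out in full rigor is precisely the content of Pelayo $\&$ \vungoc~\cite{PVNinventiones, PVNacta}, whose construction I would follow.
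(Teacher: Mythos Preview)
The paper does not prove this theorem: it is stated as a background result, attributed to Pelayo and \vungoc~\cite{PVNinventiones, PVNacta}, and no proof or sketch is given. Your proposal is therefore not something that can be compared to the paper's own argument, because there is none; the theorem is simply cited and used. What you have written is a reasonable high-level outline of the strategy carried out in the original references, and you acknowledge as much in your final sentence. If the goal were to supply a proof where the paper gives none, your sketch captures the architecture (invariance of each ingredient, semiglobal normal forms near focus-focus fibers, gluing controlled by the twisting index, and a constructive existence argument via surgery on a toric model), but it remains a plan rather than a proof: the genuinely hard steps---well-definedness of the polygon and twisting index under all auxiliary choices, the semiglobal classification near a focus-focus fiber, and the global gluing---are each substantial and are not reducible to \refthmEliasson\ and \refnondegBF\ alone.
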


For standard examples of semitoric systems, we refer to Section \ref{examples}.


\subsection{The symplectic structure on $\mbS^2$ and $\mbS^2 \times \mbS^2$}

In order to avoid, on the one hand, confusion concerning the various conventions 
in the literature and, on the other hand, to provide a precise and complete reference,
the following calculations are provided in full.

Let $\mbS^2$ be the unit sphere in $\R^3$ centered at the origin, and let
$(x_1, y_1, z_1, x_2, y_2, z_2)$ be Cartesian coordinates on $\R^3\times\R^3$.
We consider the 4-dimensional manifold $\mbS^2 \x \mbS^2\subset\R^3\times\R^3$ with symplectic form 
$$\om:= \om^{R_1 R_2}:=- (R_1 \om_{\mbS^2} \oplus R_2\om_{\mbS^2})$$ 
where $R_1, R_2 \in \R^{>0}$ and $\om_{\mbS^2}$ is the standard symplectic form on $\mbS^2$. 
Geometrically, the symplectic form $\om_{\mbS^2}$ on $\mbS^2$ is given in $p \in \mbS^2$ by
\begin{align*}
(\om_{\mbS^2})_p(u,v) = \langle p, u \x v \rangle
\end{align*}
where $\langle \cdot, \cdot \rangle$ is the Euclidean scalar product in $\R^3$, $p=(p_1, p_2, p_3) \in \mbS^2$ the 
basepoint and $u=(u_1, u_2, u_3), v=(v_1, v_2, v_3) \in T_p\mbS^2$ tangent 
vectors, i.e., $\langle p, u \rangle =0= \langle p, v \rangle$. To express $\om_{\mbS^2}$ in Cartesian coordinates, we calculate
\begin{align*}
 \langle p, u \x v \rangle & = p_1 \det \begin{pmatrix} u_2 & v_2 \\ u_3 & v_3 \end{pmatrix} + p_2 \det \begin{pmatrix} u_3 & v_3 \\ u_1 & v_1 \end{pmatrix} + p_3 \det \begin{pmatrix} u_1 & v_1 \\ u_2 & v_2 \end{pmatrix} \\
 & = p_1 (dy \wedge dz) (u,v) + p_2 (d z \wedge dx) (u,v) + p_3 (dx \wedge dy) (u,v) 
\end{align*}
and thus
$$
\om_{\mbS^2} = x dy \wedge dz + y dz \wedge dx + z dx \wedge dy.
$$
This implies
$$
\om = - \sum_{i=1}^2 R_i x_i dy_i \wedge dz_i + R_iy_i dz_i \wedge dx_i + R_i z_i dx_i \wedge dy_i
$$
in Cartesian coordinates on $\mbS^2 \x \mbS^2 \subset \R^3 \x \R^3$. We want to use charts on $\mbS^2$ that parametrise the upper and lower hemisphere as graphs over the 2-dimensional unit disk $\D^2$. To keep track of signs, we use $e \in \{+1, -1\}$ in the charts and have
$
\phi_e: \D^2 \to \mbS^2$ with
\[\phi_e(x,y):= \left(x,y, z_e(x,y) \right):= \left(x,y, e \sqrt{1- x^2 -y^2}\right)\]
such that $\phi_{+1}$ covers the northern hemisphere and $\phi_{-1}$ the southern one. Denoting the north and south pole of $\mbS^2$ by $N$ and $S$, we get charts for the `double hemispheres' around $(N,N),  (N,S), (S,N), (S,S) \in \mbS^2 \x \mbS^2$ via choosing $e_1, e_2 \in \{+1, -1\}$ accordingly and setting
\begin{align}
\label{chart}
 \phi_{e_1, e_2} &:  \D^2 \x \D^2 \to \mbS^2 \x \mbS^2, \\ \notag
 \phi_{e_1 e_2}(x_1, y_1, x_2, y_2) 
 & :=\left(x_1, y_1, e_1 \sqrt{1 - x_1^2 - y_1^2}, x_2, y_2, e_2 \sqrt{1-x_2^2 - y_2^2}\ \right).
\end{align}
For better readability, let us drop the subscripts $e$, $e_1$, and $e_2$ whenever the context allows, and
introduce a function $z_i(x_i,y_i)$, i.e., we write
$$
\phi= \phi_e, \quad z(x,y)= e\sqrt{1-x^2-y^2}, \quad \phi= \phi_{e_1e_2}, \quad  z_i(x_i, y_i):= e_i\sqrt{1-x_i^2-y_i^2}
$$
whenever possible.
Now we express $\om_{\mbS^2}$ in the new coordinates $\phi_e$.
We compute
\begin{align}
 \del_x z(x,y) & = \frac{- e  x}{\sqrt{1-x^2 - y^2}} = \frac{-x}{z(x,y)} \label{eqn:dxz}\\
  \del_y z(x,y) & = \frac{- e  y}{\sqrt{1-x^2 - y^2}} = \frac{-y}{z(x,y)} \label{eqn:dyz}
\end{align}
yielding
\begin{align}
\label{dz}
 d ( z(x,y)) & =  \frac{-x}{z(x,y)} dx +  \frac{-y}{z(x,y)}  dy 
\end{align}
leading to
\begin{align*}
 \phi^*\om_{\mbS^2} &  = \left( \frac{x^2}{z(x,y)} + \frac{y^2}{z(x,y)} + z(x,y) \right)dx \wedge dy =  \frac{1}{z(x,y)} dx \wedge dy .
\end{align*}
Subsequently we get for $\om$ in coordinates $\phi= \phi_{e_1 e_2}$ 
the expression
\begin{align*}
 \phi^*\om & =- \ \phi^*(R_1 \om_{\mbS^2} \oplus R_2 \om_{\mbS^2}) \\
 & = - \left( \frac{ R_1}{z_1(x_1, y_1)} \ dx_1 \wedge dy_1 + \frac{R_2}{z_2(x_2, y_2)} \ dx_2 \wedge dy_2\right)
\end{align*}
and thus in matrix form we have
\begin{align}
\label{omegaInverse}
\om = 
\begin{pmatrix}
 0 &  \frac{-  R_1}{z_1} & 0 & 0 \\
  \frac{  R_1}{z_1} & 0 & 0 &0 \\
  0 & 0 &  0 & \frac{-  R_2}{z_2} \\
  0 & 0 &  \frac{  R_2}{z_2} & 0 
\end{pmatrix}
\qquad\textrm{and}\qquad
\om^{-1} = 
\begin{pmatrix}
 0 &  \frac{z_1}{  R_1} & 0 & 0 \\
  \frac{-z_1}{ R_1} & 0 & 0 &0 \\
  0 & 0 &  0 & \frac{z_2}{ R_2} \\
  0 & 0 &  \frac{-z_2}{ R_2} & 0 
\end{pmatrix}.
\end{align}

Suppose $f: \mbS^2 \x \mbS^2 \to \R$.
Using the charts $\phi_{e_1 e_2}$, we compute for $h := f \circ \phi_{e_1 e_2}: \D \x \D \to \R$ the differential
$$
dh = \sum_{i=1}^2 \del_{x_i}h \ dx_i + \del_{y_i} h \ dy_i
$$
and can solve $\om(X^h, \cdot) = -dh$ for $X^h$ via
$$
(X^h)^T = -\begin{pmatrix} \del_{x_1}h ,\ \del_{y_1}h, \del_{x_2}h ,\ \del_{y_2}h \end{pmatrix} \ \om^{-1}
$$
so
\begin{equation}
 \label{HamVF}
X^h(x_1, y_1, x_2, y_2) = 
\begin{pmatrix}
 \frac{\del_{y_1} h(x_1, y_1, x_2, y_2) \ z_1(x_1, x_2)}{ R_1} \\
 - \frac{\del_{x_1} h(x_1, y_1, x_2, y_2) \  z_1(x_1, x_2)}{  R_1} \\
 \frac{\del_{y_2}h (x_1, y_1, x_2, y_2) \ z_2(x_2, y_2)}{ R_2} \\
 - \frac{\del_{x_2}h (x_1, y_1, x_2, y_2) \ z_2(x_2, y_2)}{  R_2} 
\end{pmatrix}.
\end{equation}


\subsection{Explicit examples of semitoric systems}
\label{examples}

Consider the manifold $\mbS^2\x\mbS^2$ with symplectic form $\om: = -(R_1\om_{\mbS^2} \oplus R_2\om_{\mbS^2})$ 
where $\om_{\mbS^2}$ is the standard volume form on $\mbS^2$ 
and $0<R_1<R_2$ are real numbers.

\begin{example}[{\bf Coupled angular momenta}]
\label{coupledAngular}
The coupled angular momenta system
is given by $J_{\vec{R}},H_t\colon\mbS^2\x\mbS^2\to\R$ with
 \begin{equation}\label{eqn_system}
  \begin{cases}
   J_{\vec{R}} (x_1, y_1, z_1, x_2, y_2, z_2) : = R_1 z_1 + R_2 z_2,\\
   H_t(x_1, y_1, z_1, x_2, y_2, z_2) : = (1-t) z_1 + t (x_1 x_2 + y_1 y_2 + z_1 z_2)
  \end{cases}
 \end{equation}
where $(x_i, y_i, z_i)$ are Cartesian coordinates on $\mbS^2 \subset\R^3$ for $i=1,2$, 
$t\in [0,1]$ is the \emph{coupling parameter}, and $\vec{R}=(R_1,R_2)\in\R^2$
with $0<R_1<R_2$.
\end{example}

This system was originally introduced in Sadovski\'\i\ $\&$ Z\^hilinski\'\i~\cite{SaZh-PhysLettersA} and studied in detail
in Le Floch $\&$ Pelayo~\cite{LFPecoupledangular}, where it is shown that
there exist two fixed values $t_-, t_+\in (0,1)$ with $t_-<t_+$ which depend on $R_1, R_2$ such that

\begin{enumerate}
 \item if $t_-<t<t_+$ then $(J_{\vec{R}},H_t)$ is a semitoric system with exactly one focus-focus point,
 \item if $t>t_+$ or $t<t_-$ the $(J_{\vec{R}},H_t)$ is a semitoric system with exactly zero focus-focus points (these
  are known as systems of \emph{toric type}, see Section 2 of \vungoc~\cite{VuNgoc07}),
 \item if $t=t_-$ or $t=t_+$ then $(J_{\vec{R}},H_t)$ has a degenerate singular point, and thus is not
  a semitoric system.
\end{enumerate}

The image of the momentum map for \refcoupledAngular\ with varying values of $t$ is shown in Figure~\ref{fig_coupledangular}.

\begin{figure}[h]
\includegraphics[width=350pt]{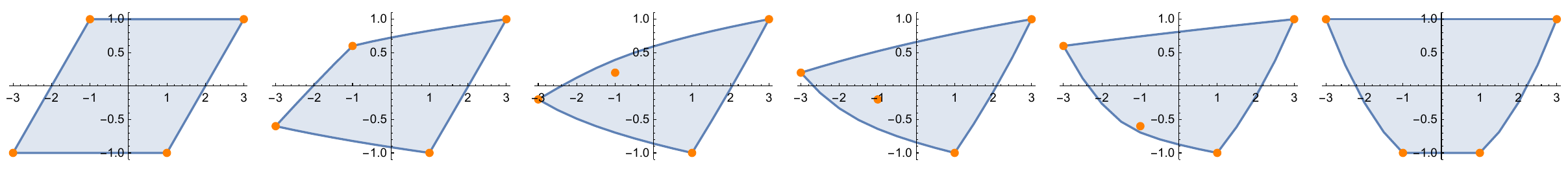}
\caption{The momentum map image for the coupled angular momenta system with the rank zero points marked.  As 
the coupling parameter $t$ changes one of the rank zero points transitions from being elliptic-elliptic to being focus-focus and then
back to elliptic-elliptic.}
\label{fig_coupledangular}
\end{figure}

Another standard example of a semitoric system is 

\begin{example}[{\bf Coupled spin oscillator}]
\label{coupledSpin}
The coupled spin oscillator system is given by $J, H: \mbS^2 \times \R^2 \to \R$ where
\begin{align*}
 J (x, y, z, u,v) := \frac{1}{2}(u^2+v^2) + z \quad \mbox{and} \quad  H(x, y, z, u,v) & := \frac{1}{2}(ux + vy)
\end{align*}
with Cartesian coordinates $(x,y,z)$ on $\mbS^2$ and $(u,v)$ on $\R^2$. 
\end{example}

See Pelayo $\&$ \vungoc~\cite{PeVNcomm-mathphys} for a detailed investigation of \refcoupledSpin.

\begin{remark}
 The spherical pendulum consists of $J, H : \mathrm{T}^*\mbS^2 \to \R$ with
 \begin{align*}
  J(q_1, q_2, q_3, p_1, p_2, p_3) & :=q_1 p_2 - q_2 p_1, \\
  H(q_1, q_2, q_3, p_1, p_2, p_3) & :=\frac{1}{2}(p_1^2+p_2^2+p_3^2)+q_3
 \end{align*}
 and satisfies nearly all of the requirements to be semitoric, but
 the $J$ is not proper since the momentum map image contains unbounded vertical lines.
 However, the spherical pendulum is a so-called \emph{generalized semitoric system}, as discussed
 in Pelayo $\&$ Ratiu $\&$ \vungoc~\cite{PeRaVN2015}.
 For this same reason, the quadratic spherical pendulum 
 (see for example Cushman $\&$ \vungoc{}~\cite{CuVN2002} and Efstathiou $\&$ Martynchuk~\cite{efstathiou})
 is not a semitoric integrable system.
\end{remark}


\section{A family of systems with two focus-focus points}
\label{sec_familyofsystems}

In this section we introduce the system which is the subject of this
paper and prove \refthmIntro, our main result.
This system is \emph{minimal} in the sense of Kane $\&$ Palmer $\&$ Pelayo~\cite{KaPaPeminimal}, i.e., it is not possible to perform a \emph{blowdown of toric type} on the system (see Kane $\&$ Palmer $\&$ Pelayo~\cite[Section 4.1]{KaPaPeminimal}
for a description of this operation).
Minimal semitoric integrable systems are classified
in Kane $\&$ Palmer $\&$ Pelayo~\cite{KaPaPeminimal} and the system discussed in the present
paper is minimal of type (2), using the terminology of that paper.

\subsection{The system}

Consider $R_1, R_2 \in \R^{>0}$ as scaling of radii with $R_1<R_2$ and endow $\mbS^2 \x \mbS^2$ with the 
symplectic form $\om=\om^{R_1 R_2}$. Let $\vec{t}:=(t_1, t_2, t_3, t_4) \in \R^4$ be parameters,
let $\vec{R}=(R_1,R_2)$, 
and define $\Phi:=(J_{\vec{R}}, H_{\vec{t}}): \mbS^2 \x \mbS^2 \to \R^2$ in Cartesian coordinates by
\begin{equation*}
 \left\{
  \begin{aligned}
   J_{\vec{R}}(x_1, y_1, z_1 , x_2, y_2, z_2) & := R_1 z_1 + R_2 z_2 , \\
   H_{\vec{t}}(x_1, y_1, z_1 , x_2, y_2, z_2) & := t_1 z_1 + t_2 z_2 + t_3 (x_1 x_2 + y_1 y_2) + t_4 z_1 z_2
  \end{aligned}
 \right.
\end{equation*}
as in Equation~\eqref{eqn_thesystem}.

Unless we explicitly need the parameters we often write $J := J_{\vec{R}}$ and $H:= H_{\vec{t}}$ for brevity.
The main result of this section is

\begin{theorem}
\label{mainThm}
The following hold:
\begin{enumerate}
  \item [1)]
  The system \eqref{eqn_thesystem} is a compact integrable system for all choices of parameters with $t_3\neq 0$,
  \item [2)]
  The system \eqref{eqn_thesystem} is semitoric and has two focus-focus points for parameters in a neighborhood of
  $$
  R_1=1,\,\, R_2=2,\,\, t_1 = \frac{1}{4},\,\, t_2 = \frac{1}{4},\,\, t_3 = \frac{1}{2},\,\, t_4=0.
  $$
 \end{enumerate}
\end{theorem}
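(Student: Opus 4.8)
The plan is to prove the two parts separately, with the first part being a direct computation and the second part relying on the nondegeneracy criterion of \refnondegBF\ together with the local normal form theory.

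\textbf{Part 1 (integrability).} First I would verify that $\{J,H\}=0$ and that $dJ,dH$ are linearly independent almost everywhere. Since $J$ generates the diagonal rotation about the $z$-axis (a genuine $\mbS^1$-action with the standard Hamiltonian $\mbS^1$-action on each factor), I would check Poisson commutativity by confirming that $H$ is invariant under this rotation: writing $z_1,z_2$ in the obvious rotation-invariant way and noting that $x_1x_2+y_1y_2 = \tfrac12(\zeta_1\bar\zeta_2+\bar\zeta_1\zeta_2)$ with $\zeta_j = x_j+iy_j$, the combination $x_1x_2+y_1y_2$ is manifestly invariant under the simultaneous rotation $\zeta_j\mapsto e^{i\theta}\zeta_j$. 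Hence $\{J,H\}=X^J(H)=0$. Linear independence of $dJ$ and $dH$ on a dense open set is automatic once they are not everywhere proportional; the term $t_3(x_1x_2+y_1y_2)$ with $t_3\neq 0$ guarantees $H$ is not a function of $J$ alone, so the rank is $2$ generically. Compactness of $\mbS^2\x\mbS^2$ then gives properness of $J$ for free.

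\textbf{Part 2 (semitoric with two focus-focus points).} Here the strategy is to work at the explicit parameter point $\vec t = (\tfrac14,\tfrac14,\tfrac12,0)$, $\vec R = (1,2)$, locate all rank-$0$ fixed points, determine their Williamson type, and then invoke an openness argument. The fixed points of the $\mbS^1$-action generated by $J$ are exactly the four points $(N,N),(N,S),(S,N),(S,S)$ where each sphere coordinate sits at a pole; these are the candidate rank-$0$ singular points (one must also check there are no other rank-$0$ points of the full system, which follows from examining where $X^H$ vanishes together with $X^J$). At each such point I would compute the Hessians $d^2J$ and $d^2H$ in the Cartesian-graph charts $\phi_{e_1e_2}$ introduced in the excerpt, and use the explicit inverse symplectic matrix $\om^{-1}$ from Equation~\eqref{omegaInverse} to form $\om_p^{-1}d^2J$ and $\om_p^{-1}d^2H$. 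By \refnondegBF\ I then test nondegeneracy (existence of a linear combination with four distinct eigenvalues) and by Proposition~\ref{prop_eigenvalues} I read off the type from whether the eigenvalue quadruple is $\{\pm i\alpha,\pm i\beta\}$ (elliptic-elliptic) or $\{\pm\alpha\pm i\beta\}$ (focus-focus). The expectation, consistent with the deformation picture in Theorem~\ref{thm_s1s2intro}, is that exactly two of the four poles are focus-focus and the other two are elliptic-elliptic at this parameter value.

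\textbf{Rank-$1$ points and openness.} To complete the semitoric conditions I would verify that all rank-$1$ singular points are nondegenerate elliptic; by \refmorse\ this reduces to checking that $H$ descends to a Morse function on each symplectic quotient $J^{-1}(c)/\mbS^1$, which (once $dJ\neq 0$ away from the poles is confirmed) is a computation on the reduced spaces, each a sphere, where the reduced Hamiltonian is an explicit function of a single reduced coordinate. Finally, since the finitely many eigenvalue conditions (four distinct eigenvalues at each fixed point, with the focus-focus/elliptic-elliptic type being an \emph{open} condition by the continuity-of-roots argument used in the proof of Proposition~\ref{prop_degenerate}) are all strict open conditions, and nondegeneracy of the rank-$1$ locus and the no-hyperbolic-block condition are likewise open, they persist under small perturbations of $(\vec t,\vec R)$; this yields the claimed neighborhood. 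The main obstacle will be the rank-$1$ analysis: unlike the four isolated fixed points, the rank-$1$ singular locus must be identified globally (it is where $dH$ is proportional to $dJ$ but both are nonzero), and one must confirm both that no \emph{hyperbolic} rank-$1$ points occur and that the simplicity hypothesis (at most one critical point per $J$-fiber) holds, which requires understanding the full critical structure rather than a local Hessian computation.
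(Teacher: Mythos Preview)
Your proposal is correct and follows essentially the same decomposition as the paper: integrability via $\{J,H\}=0$ plus generic linear independence (Proposition~\ref{prop_integrable}), rank-$0$ analysis via Hessians and the eigenvalue criterion of \refnondegBF\ (Corollary~\ref{cor_goodparameters} and Proposition~\ref{prop_2FFpoints-nbhd}), rank-$1$ analysis via \refmorse\ on the symplectic quotients (Proposition~\ref{prop_criterion} and Corollary~\ref{cor_rank1nondegents}), and openness of the type conditions. The one substantive difference is your argument for $\{J,H\}=0$: you observe that $H$ is invariant under the diagonal $\mbS^1$-rotation generated by $J$ (since $x_1x_2+y_1y_2=\mathrm{Re}(\zeta_1\bar\zeta_2)$ is rotation-invariant), which is cleaner and more conceptual than the paper's direct coordinate computation of the Poisson bracket in Lemma~\ref{poissonNull}. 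Your flagging of the global rank-$1$ analysis as the main obstacle is accurate; the paper handles it by deriving an explicit inequality on the reduced space (Proposition~\ref{prop_criterion}) that holds with a margin at the given parameter values.
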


Theorem~\ref{mainThm} is a combination
of Propositions~\ref{prop_2FFpoints-nbhd} and ~\ref{prop_integrable} and Corollary~\ref{cor_rank1nondegents} which 
we prove in the remainder of this section.

\begin{remark}
 At the parameters for which the system in Equation~\eqref{eqn_thesystem} is has two focus-focus
 points it enjoys a certain sense of uniqueness.  As shown in~\cite[Theorem 2.5]{KaPaPeminimal},
 up to scaling the lengths of the sides, there is only one semitoric polygon for which the corresponding
 system is compact with two focus-focus points such that $J$ has isolated fixed points.
 Thus, this semitoric polygon is the one associated to the system in Equation~\eqref{eqn_thesystem}.
 By evaluating $J$ on the rank zero points (see Lemma~\ref{rankNull}) we can easily find the semitoric polygon
 for the system~\eqref{eqn_thesystem}, as shown in Figure~\ref{fig_semitoricpoly}.
\end{remark}

\begin{figure}[h]
\includegraphics[width=350pt]{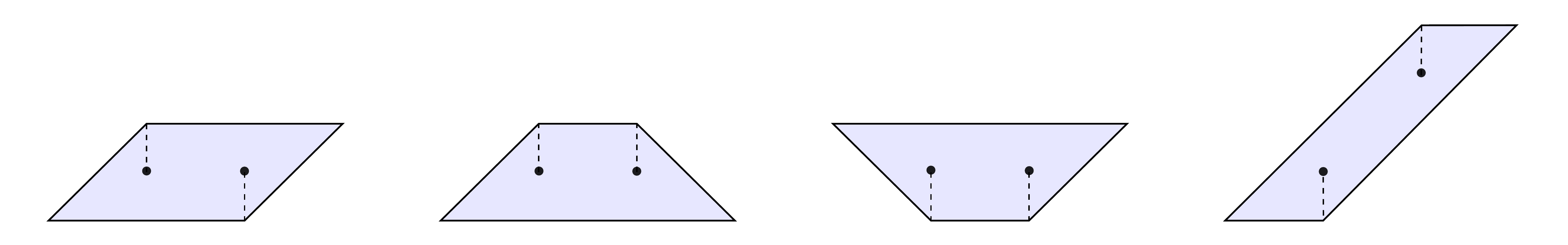}
\caption{Four semitoric polygons associated to the system~\eqref{eqn_thesystem}.
The slanted edges all have slope $\pm 1$.
For each polygon the $x$\--coordinates of the vertices, from left to right, 
are $-R_1-R_2$, $R_1-R_2$, $-R_1+R_2$, and $R_1+R_2$ (since we assume $R_1<R_2$).}
\label{fig_semitoricpoly}
\end{figure}

\begin{remark}
 At first, we considered the system
 \begin{equation*}\label{eqn_lsystem}
\begin{aligned}
 J_{\vec{R}}(x_1, y_1, z_1, x_2, y_2, z_2)&   := R_1 z_1 + R_2 z_2 , \\
H_{(\ell_1,\ell_2)}(x_1, y_1, z_1, x_2, y_2, z_2) &  := (1-\ell_1) z_1 + (1- \ell_2) z_2 + \ell_1 \ell_2 (x_1x_2 + y_1 y_2 + z_1 z_2)
\end{aligned}
\end{equation*}
 for $\ell_1,\ell_2\in[0,1]$
 hoping to generalize the construction of the coupled angular momentum,
 but numerical evidence strongly suggests 
 that while there are two different points that become focus-focus for certain values
 there is no choice of $\ell_1$ and $\ell_2$ for which both points are
 focus-focus simultaneously.
\end{remark}


\subsection{Rank 0 points and their nondegeneracy}

In the chart $\phi$, the integrals $J$ and $H$ are given by
\begin{align}
\label{Jchart}
 J(x_1, y_1, x_2, y_2) & =  R_1 z_1 + R_2 z_2 , \\
 \label{Hchart}
 H(x_1, y_1, x_2, y_2) & = t_1 z_1 + t_2 z_2 + t_3 (x_1 x_2 + y_1 y_2) + t_4 z_1 z_2. 
\end{align}
where each $z_i = z_i(x_i,y_i)$ is a function of $x_i$ and $y_i$ for $i=1,2$. 
Using equations \eqref{eqn:dxz}, \eqref{eqn:dyz}, and \eqref{HamVF}, the Hamiltonian vector fields are given by
\begin{equation}
 \label{HamJH}
X^J(x_1, y_1, x_2, y_2) =  
\begin{pmatrix}
 -y_1 \\
   x_1 \\
 -y_2\\
  x_2
\end{pmatrix}
,
X^H(x_1, y_1, x_2, y_2) =
\begin{pmatrix}
 \,\frac{-t_1 y_1 + t_3 y_2 z_1 - t_4 z_2 y_1}{R_1} \\[6pt]
 \frac{t_1 x_1 - t_3 x_2 z_1 + t_4 z_2 x_1}{  R_1} \\[6pt]
 \frac{ -t_2 y_2  + t_3 y_1 z_2 - t_4 z_1 y_2 }{ R_2} \\[6pt]
 \frac{ t_2 x_2  - t_3 x_1 z_2 + t_4 z_1 x_2 }{  R_2} 
\end{pmatrix}.
\end{equation}


\noindent
Recall that $N$ denotes the north pole of $\mbS^2$ and $S$ the south pole.

\begin{lemma}
\label{rankNull}
 The set of rank 0 points of $(J,H)$, i.e., the set of fixed points, is given by $\{(N,N), (N,S), (S,N), (S,S)\}$.
\end{lemma}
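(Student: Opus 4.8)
The plan is to use that a rank $0$ point is exactly a point where $dF = 0$, and that since $\om$ is nondegenerate this is equivalent to the simultaneous vanishing $X^J = 0$ and $X^H = 0$. Because $J$ has the simpler form, I would first locate the zeros of $X^J$ and only afterwards impose $X^H = 0$ at those finitely many candidate points.

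First I would simplify the expression \eqref{HamJ} for $X^J$ using the identities $\del_{x_i} z_i = -x_i/z_i$ and $\del_{y_i} z_i = -y_i/z_i$ coming from \eqref{delxi} and \eqref{delyi}; the factors of $z_i$ cancel and one obtains, in the chart $\phi_{e_1 e_2}$,
\[
X^J(x_1,y_1,x_2,y_2) = \left( -\tfrac{y_1}{R_1},\ \tfrac{x_1}{R_1},\ -\tfrac{y_2}{R_2},\ \tfrac{x_2}{R_2}\right)^{T},
\]
which vanishes within the chart exactly when $x_1 = y_1 = x_2 = y_2 = 0$, i.e.\ only over a pole of each sphere. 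Since the graph charts $\phi_{e_1 e_2}$ are regular only over the open hemispheres, they cover $\mbS^2 \x \mbS^2$ except the locus where some factor lies on its equator, so I would close this gap with the coordinate-free remark that $J = R_1 z_1 + R_2 z_2$ is a positive combination of the two height functions, whose only critical points on $\mbS^2$ are $N$ and $S$; hence $dJ \neq 0$ whenever some factor sits on its equator. Combining these, $dJ = 0$ holds exactly on $\{(N,N),(N,S),(S,N),(S,S)\}$.

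Since every rank $0$ point in particular satisfies $dJ = 0$, the fixed-point set is contained in these four points, and it remains only to verify $dH = 0$ at each of them. At such a point one has $x_i = y_i = 0$, whence $\del_{x_i} z_i = \del_{y_i} z_i = 0$ by \eqref{delxi} and \eqref{delyi}; substituting these, together with $x_1 = y_1 = x_2 = y_2 = 0$, into the expression \eqref{HamH} for $X^H$ makes every entry vanish. Thus $X^H = 0$, hence $dH = 0$, at all four points, and together with $dJ = 0$ this certifies that each of them genuinely has rank $0$.

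I expect the only real subtlety to be the bookkeeping at the equators, where the graph charts degenerate and the formula \eqref{HamJ} is not valid; the height-function observation dispenses with this cleanly and avoids introducing auxiliary charts. The remaining steps are direct substitutions.
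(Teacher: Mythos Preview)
Your proposal is correct and follows essentially the same approach as the paper: first locate the zeros of $X^J$ (the paper does this purely via the geometric observation that $J$ generates simultaneous rotation of both spheres, whereas you add the explicit chart computation and then invoke the height-function argument to cover the equators), and then verify $X^H=0$ at those four candidates by direct substitution into \eqref{HamH}. Your handling of the equatorial locus is slightly more explicit than the paper's, but the overall structure and ideas coincide.
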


\begin{proof}
 Geometrically, $J$ is the sum of the height function
 on each factor of the product $\mbS^2 \x \mbS^2$ 
 scaled by $R_1$ and $R_2$ respectively. 
Thus $J$ gives rise to horizontal rotations on each of the two spheres 
and its Hamiltonian flow has fixed points exactly at $\{(N,N), (N,S), (S,N), (S,S)\}$. 
The function $J$ reaches its global maximum, $R_1+ R_2$, at $(N,N)$ and its global 
minimum, $-(R_1 + R_2)$, at $(S,S)$. The corresponding fibers $J^{-1}(R_1 + R_2)$ 
and $J^{-1}(-(R_1 + R_2))$ consist exactly of the singletons $ \{(N,N)\}$ and $\{(S,S)\}$. 

Fixed points of $(J,H): \mbS^2 \x \mbS^2 \to \R$ require $\rk D(J,H) = 0$. 
Therefore they must have $DJ=0$ which is equivalent to $X^J=0$, i.e., when we look for 
fixed points of $(J,H)$, the only candidates are the points $(N,N)$, $(N,S)$, $(S,N)$, 
and $(S,S)$ for which we have to check if additionally $DH=0$ or equivalently $X^H=0$ holds.

Since all possible fixed points lie in the range of the charts $\phi_{e_1e_2}$ we can check 
the values of $X^H$ by using formula \eqref{eqn:dxz}, \eqref{eqn:dyz}, and \eqref{HamJH}. The 
corresponding point in the domain is in all cases $(x_1, y_1, x_2, y_2)= (0,0,0,0)$ and 
we compute that $X^H(0,0,0,0)$ vanishes
and thus $\{(N,N), (N,S), (S,N), (S,S)\}$ is indeed the fixed point set of $(J,H)$.
\end{proof}

Keep in mind from the above proof that the rank 0 points correspond to the origin in the charts in \eqref{chart}. 

\begin{lemma}
 At the origin $p= (0,0,0,0)$ in the charts in \eqref{chart}, we find
 \begin{equation}
  \om^{-1}_p d^2 J(p) = 
  \begin{pmatrix}
   0 & -1 & 0 & 0 \\
   1 & 0 & 0 & 0 \\
   0 & 0 & 0 & -1 \\
   0 & 0 & 1 & 0
  \end{pmatrix}
 \end{equation}
and 
\begin{equation}
\label{omHessH}
  \om^{-1}_p d^2 H(p) = 
  \begin{pmatrix}
 0 & - \frac{t_1 +  e_2 t_4}{R_1} & 0 & \frac{e_1 t_3 }{R_1} \\
  \frac{t_1 + e_2 t_4}{R_1} & 0 & - \frac{ e_1 t_3}{R_1} & 0 \\
    0 & \frac{ e_2 t_3 }{R_2} & 0 & - \frac{t_2 + e_1 t_4}{R_2} \\
 - \frac{ e_2 t_3 }{R_2} & 0 &  \frac{t_2 + e_1 t_4}{R_2} & 0 
 \end{pmatrix}.
 \end{equation}
\end{lemma}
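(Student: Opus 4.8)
The plan is to compute the two Hessians $d^2 J(p)$ and $d^2 H(p)$ directly in the chart $\phi=\phi_{e_1 e_2}$, in which (by the proof of Lemma~\ref{rankNull}) the rank $0$ point $p$ sits at the origin $(0,0,0,0)$, and then to left\--multiply each by the matrix $\om^{-1}_p$. The latter is completely explicit: since $z_i(x_i,y_i)=e_i\sqrt{1-x_i^2-y_i^2}$ satisfies $z_i(0,0)=e_i$, evaluating \eqref{omegaInverse} at the origin gives
\[
\om^{-1}_p=
\begin{pmatrix}
0 & \frac{e_1}{R_1} & 0 & 0\\
-\frac{e_1}{R_1} & 0 & 0 & 0\\
0 & 0 & 0 & \frac{e_2}{R_2}\\
0 & 0 & -\frac{e_2}{R_2} & 0
\end{pmatrix},
\]
and throughout I will simplify using $e_i^2=1$.

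First I would record the Hessian of $z_i$ at the origin. From \eqref{delxi} and \eqref{delyi} the first partials $\del_{x_i}z_i$ and $\del_{y_i}z_i$ vanish there, and one further differentiation gives $\del^2_{x_i x_i}z_i=\del^2_{y_i y_i}z_i=-e_i$ and $\del^2_{x_i y_i}z_i=0$; thus the Hessian of $z_i$ at the origin is $-e_i$ times the identity on its $(x_i,y_i)$\--block. Since $J=R_1 z_1+R_2 z_2$ is linear in $z_1,z_2$, its Hessian is block\--diagonal with diagonal $(-R_1 e_1,-R_1 e_1,-R_2 e_2,-R_2 e_2)$, and left\--multiplying by $\om^{-1}_p$ (the $e_i^2=1$ cancellations removing all the $R_i$ and $e_i$) produces the stated standard symplectic matrix for $\om^{-1}_p d^2 J(p)$.

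For $H$ I would split the Hessian according to the four terms. The linear terms $t_1 z_1+t_2 z_2$ contribute, by the same computation as for $J$, diagonal blocks $-e_1 t_1$ and $-e_2 t_2$; the bilinear term $t_3(x_1x_2+y_1y_2)$ has constant Hessian, contributing the single off\--diagonal coupling $t_3$ in the $(x_1,x_2)$ and $(y_1,y_2)$ slots. The one place that needs care is the product term $t_4 z_1 z_2$: by the product rule its pure second derivatives at the origin are $(\del^2_{x_1 x_1}z_1)\,z_2=-e_1 e_2$ (and likewise for the $y$\--variables and for the second factor), while every mixed partial that differentiates both factors, such as $\del^2_{x_1 x_2}(z_1 z_2)=(\del_{x_1}z_1)(\del_{x_2}z_2)$, vanishes precisely because each first partial of $z_i$ vanishes at the origin. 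Hence $t_4 z_1 z_2$ only adds $-e_1 e_2 t_4$ to the diagonal entries and contributes nothing to the coupling between the two spheres. Collecting terms yields $d^2 H(p)$ with diagonal blocks $-e_1(t_1+e_2 t_4)$ and $-e_2(t_2+e_1 t_4)$ and off\--diagonal entries $t_3$, and a final left\--multiplication by $\om^{-1}_p$, again simplified with $e_i^2=1$, gives \eqref{omHessH}. The whole argument is a routine computation; the only genuine obstacle is the sign bookkeeping of $e_1,e_2$ together with the observation that the cross derivatives of $z_1 z_2$ drop out at the origin, so that the inter\--sphere coupling in $d^2 H(p)$ comes solely from the $t_3$ term.
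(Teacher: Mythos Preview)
Your proposal is correct and follows essentially the same approach as the paper's proof: both compute the Hessians of $J$ and $H$ by treating the building blocks $z_1$, $z_2$, $x_1x_2+y_1y_2$, and $z_1z_2$ separately at the origin, assemble $d^2J(p)$ and $d^2H(p)$, and then left\--multiply by the explicit $\om_p^{-1}$ with the simplification $e_i^2=1$. Your observation that the cross derivatives of $z_1 z_2$ vanish at the origin is exactly the key point the paper also records, so there is no substantive difference between the two arguments.
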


\begin{proof}
We compute the Hessians of $J$ and $H$ using \eqref{Jchart} and \eqref{Hchart}. 
 Since derivatives are additive we can first calculate the Hessians of their 
 components seperately. Recall from \eqref{eqn:dxz} and \eqref{eqn:dyz} 
 that $\del_{x_i}z_i = \frac{-x_i}{z_i}$ and $\del_{y_i}z_i = \frac{-y_i}{z_i}$, yielding
 \begin{align*}
  \del^2_{x_i x_i}z_i = \frac{- z_i + x_i \del_{x_i}z_i}{z_i^2}, \quad \del^2_{x_i y_i} z_i = \frac{ x_i \del_{y_i}z_i}{z_i^2}, \quad\textrm{and}\quad \del^2_{y_i y_i}z_i = \frac{- z_i + y_i \del_{y_i}z_i}{z_i^2}.
 \end{align*}
Since $z_1$ does not depend on $x_2, y_2$ and $z_2$ does not depend on $x_1$ and $y_1$ we obtain for the Hessian of $z_i$ w.r.t.\ the variables $x_1$, $y_1$, $x_2$, $y_2$ in $p$
\begin{align*}
 d^2z_1(p) = 
 \begin{pmatrix}
  -e_1 & 0 & 0 & 0 \\
  0 & -e_1 & 0 & 0 \\
  0 & 0 & 0 & 0 \\
  0 & 0 & 0 & 0 
 \end{pmatrix}
 \quad \mbox{and} \quad
  d^2z_2(p) = 
 \begin{pmatrix}
  0 & 0 & 0 & 0 \\
  0 & 0 & 0 & 0  \\
  0 & 0 & -e_2 & 0  \\
  0 & 0 & 0 & -e_2 \\
  \end{pmatrix}.
\end{align*}
Next we consider the term $x_1x_2 + y_1 y_2$ and get
\begin{align*}
 d^2(x_1 x_2 + y_1 y_2)(p) = 
 \begin{pmatrix}
  0 & 0 & 1 & 0 \\
  0 & 0 & 0 & 1 \\
  1 & 0 & 0 & 0 \\
  0 & 1 & 0 & 0 
 \end{pmatrix}. 
\end{align*}
For the term $z_1z_2$, we get 
\begin{align*}
 d^2 (z_1 z_2)(p) = 
 \begin{pmatrix}
    -e_1e_2 & 0 & 0 & 0 \\
    0 & -e_1e_2 & 0 & 0 \\
     0 & 0 & -e_1e_2  & 0 \\
   0 &  0 & 0 & -e_1e_2  
 \end{pmatrix}.
\end{align*}
The equations \eqref{Jchart} and \eqref{Hchart} together with the above calculations yield
\begin{align*}
 d^2 J(p) & =  
 \begin{pmatrix}
  -e_1R_1 & 0 & 0 & 0 \\
  0 & -e_1R_1 & 0 & 0 \\
  0 & 0 & -e_2R_2 & 0 \\
  0 & 0 & 0 & -e_2 R_2 
 \end{pmatrix}
\end{align*}
and
\begin{align*}
 d^2 H(p) & = 
 \begin{pmatrix}
  - t_1 e_1 - t_4 e_1 e_2 & 0 & t_3 & 0 \\
  0 & - t_1 e_1 - t_4 e_1 e_2 & 0 & t_3 \\
  t_3 & 0 & - t_2 e_2 - t_4 e_1 e_2 & 0 \\
  0 & t_3 & 0 & - t_2 e_2 - t_4 e_1 e_2 
 \end{pmatrix}.
\end{align*}
Evaluating $\om^{-1}$ from \eqref{omegaInverse} at $p$ yields
\begin{equation*}
\om^{-1}_p = 
 \begin{pmatrix}
  0 & \frac{e_1}{R_1} & 0 & 0 \\
  -\frac{e_1}{R_1}  & 0 & 0 & 0 \\
  0 & 0 & 0 &  \frac{e_2}{R_2} \\
  0 & 0 &  - \frac{e_2}{R_2} & 0
 \end{pmatrix}
\end{equation*}
and therefore, using $e_1^2 = 1 = e_2^2$, we get the desired results for $\om_p^{-1}J(p) $ and $\om_p^{-1}H(p)$.
\end{proof}

Given a polynomial of the form $ay^2 + by + c$, the expression $b^2 -4ac$ is called
 the {\em discriminant} of the polynomial. Thus, a straightforward calculation yields

\begin{corollary}
 \label{discrim}
 Denote by $I$ the $4 \x 4$ identity matrix. Then the characteristic polynomial of $\om^{-1}_p d^2 H(p)$ is given by
 \begin{align*}
  \chi(X)& := \det\bigl( \om^{-1}_p d^2 H(p) - X I \bigr) \\
 & =  X^4 + \frac{1}{R	_1^2 R_2^2} \left( R_1^2 ( t_2 + e_1 t_4)^2 + 2 e_1 e_2 R_1 R_2 t_3^2 + R_2^2(t_1 + e_2 t_4)^2 \right) X^2 \\
 & \quad + \frac{1}{R_1^2 R_2^2}\left( ( t_2 + e_1 t_4)^ 2 (t_1 + e_2 t_4)^2  - 2 e_1 e_2( t_2 + e_1 t_4)(t_1 + e_2 t_4) t_3^2 + t_3^4  \right)
 \end{align*}
which is a polynomial of second degree in $Y:=X^2$ with discriminant
\begin{align}\label{eqn_discrimiant}
 \lapla  :=& \lapla_{\vec{R}, \vec{t}, e_1, e_2}\\ \notag
 =&  \left(\frac{1}{ R_1^2 R_2^2} \left( R_1^2 ( t_2 + e_1 t_4)^2 + 2 e_1 e_2 R_1 R_2 t_3^2 + R_2^2(t_1 + e_2 t_4)^2 \right)\right)^2 \\ \notag
 & \quad - \frac{4}{R_1^2 R_2^2}\left( ( t_2 + e_1 t_4)^ 2 (t_1 + e_2 t_4)^2  - 2 e_1 e_2( t_2 + e_1 t_4)(t_1 + e_2 t_4) t_3^2 + t_3^4  \right).
\end{align}
\end{corollary}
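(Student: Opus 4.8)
The plan is to obtain $\chi(X)$ by expanding $\det\bigl(\om^{-1}_p d^2 H(p) - X I\bigr)$ directly from the explicit matrix in Equation~\eqref{omHessH}, and then to read off the discriminant of the resulting biquadratic by applying the quadratic formula in the variable $Y = X^2$. To keep the bookkeeping under control I would first abbreviate the four distinct nonzero entries of \eqref{omHessH}, writing
\[
a := \frac{t_1 + e_2 t_4}{R_1}, \qquad b := \frac{e_1 t_3}{R_1}, \qquad c := \frac{e_2 t_3}{R_2}, \qquad d := \frac{t_2 + e_1 t_4}{R_2},
\]
so that the matrix takes the compact form
\[
M = \begin{pmatrix} 0 & -a & 0 & b \\ a & 0 & -b & 0 \\ 0 & c & 0 & -d \\ -c & 0 & d & 0 \end{pmatrix}.
\]

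Rather than verify by brute force that the odd-degree coefficients of $\chi$ vanish, I would first explain \emph{why} only even powers of $X$ appear. Conjugating $M$ by the diagonal matrix $\operatorname{diag}(1,-1,1,-1)$ sends $M$ to $-M$, so the spectrum of $M$ is invariant under $\lambda \mapsto -\lambda$ and hence $\chi$ is an even polynomial in $X$; equivalently, one checks that $\operatorname{tr} M = 0$ and that every $3\times 3$ principal minor of $M$ vanishes. Consequently $\chi(X) = X^4 + c_2 X^2 + c_0$, and it remains only to identify $c_2$ and $c_0$.

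For the $X^2$-coefficient I would use that $c_2$ equals the sum of the six $2\times 2$ principal minors of $M$; only the index pairs $\{1,2\}, \{3,4\}, \{1,4\}, \{2,3\}$ contribute, yielding $c_2 = a^2 + d^2 + 2bc$. Substituting the abbreviations and clearing $R_1^2 R_2^2$ reproduces the stated coefficient. For the constant term $c_0 = \det M$, the observation that shortens the whole computation is that a cofactor expansion along the first row factors as $\det M = (ad - bc)^2$, a perfect square. Expanding
\[
ad - bc = \frac{(t_1 + e_2 t_4)(t_2 + e_1 t_4) - e_1 e_2 t_3^2}{R_1 R_2},
\]
squaring, and using $e_1^2 = e_2^2 = 1$ gives precisely the claimed constant term, including the cross term $-2 e_1 e_2 (t_2 + e_1 t_4)(t_1 + e_2 t_4) t_3^2$.

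Finally, regarding $\chi$ as the quadratic $Y^2 + c_2 Y + c_0$ in $Y = X^2$, its discriminant is $c_2^2 - 4 c_0$, which is exactly Equation~\eqref{eqn_discrimiant}. The only genuine difficulty here is clerical: correctly propagating the signs $e_1, e_2$ through the squaring of $ad - bc$ and matching the cross term. I expect the perfect-square factorization of $\det M$ to be the key step that keeps this sign-tracking manageable, since it isolates a single quantity whose expansion must be checked rather than a fully expanded quartic.
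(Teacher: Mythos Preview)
Your proposal is correct. The paper itself gives no detailed proof of this corollary, deriving it simply as ``a straightforward calculation'' from the explicit form of $\om^{-1}_p d^2 H(p)$ in Equation~\eqref{omHessH}; your approach is exactly such a calculation, and the structural observations you add --- the conjugation by $\operatorname{diag}(1,-1,1,-1)$ to explain evenness, the principal-minor formula for $c_2$, and especially the perfect-square factorization $\det M = (ad-bc)^2$ --- organize the bookkeeping more cleanly than a brute-force expansion would.
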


Now we want to determine the type of the rank $0$ points located at $(N, N)$, $(N, S)$, $(S, N)$, $(S, S)$, i.e., if they are nondegenerate or not and, in case they are nondegenerate, if they are focus-focus or elliptic-elliptic or something else. We will see that the type of the rank $0$ points highly depends on the choice of parameters $\vec{R} $ and $\vec{t}$. 
\begin{proposition}[{\bf Rank 0 Criterion}]\label{prop_rank0criterion}
 Suppose $p\in \mathbb{S}^2\times \mathbb{S}^2$ has $z$-coordinates $(e_1, e_2)\in \{-1,1\}^2$.  Then $p$ is a rank
 0 singular point of $(J_{\vec{R}},H_{\vec{t}})$.  If $\lapla_{\vec{R}, \vec{t}, e_1, e_2}<0$ then $p$ is non-degenerate of
 focus-focus type, and
 if $\lapla_{\vec{R}, \vec{t}, e_1, e_2}>0$ then $p$ is non-degenerate and is of type elliptic-elliptic,
 elliptic-hyperbolic, or hyperbolic-hyperbolic.
\end{proposition}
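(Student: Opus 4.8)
The plan is to classify the four rank~$0$ points by reading off the eigenvalue structure of a regular element of the Cartan subalgebra they generate, exactly as prescribed by Lemma~\ref{nondegBF} together with Proposition~\ref{prop_eigenvalues}. That $p$ is a rank~$0$ point is already contained in Lemma~\ref{rankNull}. For the classification I would take as candidate regular element the matrix $A:=\om^{-1}_p d^2 H(p)$ itself, whose characteristic polynomial
\[
\chi(X)=X^4+BX^2+C
\]
is computed in Corollary~\ref{discrim}; there $\lapla$ is precisely the discriminant of $\chi$ regarded as a quadratic in $Y:=X^2$. Writing $Y_\pm=\tfrac{1}{2}(-B\pm\sqrt{\lapla})$, the eigenvalues of $A$ are the four numbers $\pm\sqrt{Y_+},\pm\sqrt{Y_-}$, and the whole argument reduces to tracking how these depend on the sign of $\lapla$.

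First I would record an algebraic fact: the constant term $C$ is a perfect square, since the bracket defining $C$ in Corollary~\ref{discrim} equals $\bigl((t_2+e_1t_4)(t_1+e_2t_4)-e_1e_2t_3^2\bigr)^2$, so $C\ge 0$ and $Y_+Y_-=C\ge 0$. This single observation does most of the bookkeeping. In the case $\lapla<0$ the roots $Y_\pm$ form a genuinely non-real complex-conjugate pair, so $\sqrt{Y_\pm}$ produce a quadruple $\pm\alpha\pm\mathrm{i}\beta$ with $\alpha,\beta\neq 0$ (both are nonzero precisely because $Y_\pm$ are non-real); these are four distinct eigenvalues, so $A$ is regular, Proposition~\ref{prop_eigenvalues} identifies the block as focus-focus, and non-degeneracy follows from Lemma~\ref{nondegBF}. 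The required linear independence of $d^2J$ and $d^2H$ is automatic here, since $d^2J(p)\neq 0$ while proportionality $d^2H=\lambda d^2J$ would force $A=\lambda\,\om^{-1}_p d^2J$ to have only the repeated eigenvalues $\pm\lambda\mathrm{i}$, contradicting regularity.

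When $\lapla>0$ the roots $Y_\pm$ are real and distinct. If both are nonzero, then each pair $\pm\sqrt{Y_\pm}$ is either a real (hyperbolic) or an imaginary (elliptic) block according to its sign, the four eigenvalues are distinct, and $p$ is non-degenerate of one of the listed types; in fact, since $C=Y_+Y_-\ge 0$ the two blocks share the same character, so only elliptic-elliptic and hyperbolic-hyperbolic actually occur. The one remaining possibility is $C=0$, which forces $Y_+Y_-=0$ and hence a repeated zero eigenvalue of $A$, so that $A$ itself fails to be regular. I would resolve this by perturbing the choice of regular element within the Cartan subalgebra: replace $A$ by $A_s:=\om^{-1}_p\bigl(d^2H+s\,d^2J\bigr)$, observe that its discriminant $\lapla(s)$ and constant term $C(s)$ are polynomials in $s$ with $\lapla(0)=\lapla>0$ and $C(s)$ not identically zero, and choose a small $s$ with $\lapla(s)>0$ and $C(s)\neq 0$; then $A_s$ has four distinct eigenvalues, Lemma~\ref{nondegBF} gives non-degeneracy, and Proposition~\ref{prop_eigenvalues} assigns the type independently of the chosen regular element.

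The main obstacle is exactly this last point: the sign of $\lapla$ cleanly separates focus-focus from the real-block types, but $\lapla>0$ does not by itself guarantee four \emph{distinct} eigenvalues of $A$, because a root $Y_\pm$ can vanish when $C=0$. Handling that boundary requires using the freedom, granted by Lemma~\ref{nondegBF} and the choice-independence in Proposition~\ref{prop_eigenvalues}, to pass to a nearby regular element rather than relying on $A$ alone; everything else is the routine root analysis of the quadratic in $Y$ furnished by Corollary~\ref{discrim}.
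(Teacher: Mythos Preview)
Your argument is correct and follows the same route as the paper: analyze the biquadratic characteristic polynomial $\chi(X)=X^4+BX^2+C$ of $A=\om_p^{-1}d^2H(p)$ from Corollary~\ref{discrim} and read off the Williamson type from the sign of~$\lapla$. In fact you are more careful than the paper in two respects. First, you explicitly verify the linear independence hypothesis of Lemma~\ref{nondegBF}, which the paper leaves implicit. Second, and more substantively, you isolate the boundary case $C=0$ in the $\lapla>0$ regime, where $A$ itself has a repeated zero eigenvalue and is not regular; the paper's proof silently assumes four distinct eigenvalues here. Your fix---perturbing to $A_s=\om_p^{-1}(d^2H+s\,d^2J)$, which amounts to the substitution $t_1\mapsto t_1+sR_1$, $t_2\mapsto t_2+sR_2$ in Corollary~\ref{discrim}, so that $C(s)$ is a nonconstant polynomial in $s$---is sound. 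Your observation that $C$ is a perfect square (hence $Y_+Y_-\ge 0$, ruling out the mixed elliptic-hyperbolic type when $C\neq 0$) is also a nice strengthening not present in the paper.
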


\begin{proof}
We already know that the set of rank 0 point are exactly those with $z$\--coordinates $\pm 1$ by Lemma~\ref{rankNull}.
Note that the characteristic polynomial of $\om^{-1}_p d^2 H(p)$ has zeros
\[
 X = \pm\sqrt{\frac{-1}{2R_1^2 R_2^2} \left( R_1^2 ( t_2 + e_1 t_4)^2 + 2 e_1 e_2 R_1 R_2 t_3^2 + R_2^2(t_1 + e_2 t_4)^2 \right) \pm \frac{\sqrt{\lapla}}{2}}
\]
where $\lapla:=\lapla_{\vec{R}, \vec{t}, e_1, e_2}$ is as in Equation~\eqref{eqn_discrimiant}.

If $\lapla<0$ then there are four eigenvalues which take the form $\al\pm\mathrm{i}\be$ for $\al,\be\in\R$,
and thus $p$ is focus-focus by Proposition~\ref{prop_eigenvalues}.
If, $\lapla>0$ then $p$ is a non-degenerate fixed point which is not focus-focus, so
it is either elliptic-elliptic, hyperbolic-hyperbolic, or hyperbolic-elliptic.
\end{proof}

Note that in the case $\lapla = 0$ the point can still be non-degenerate, but
Proposition~\ref{prop_rank0criterion} does not give us any information in this case.
The following statement implies that there exist parameter values for which the system has four nondegenerate rank $0$ points, two of them elliptic-elliptic and two focus-focus, and is proved by plugging the values into the criterion in
Proposition~\ref{prop_rank0criterion}.

\begin{corollary}\label{cor_goodparameters}
 For the parameter values
 \begin{equation}
 \label{goodParam}
  R_1 = 1, \quad R_2 = 2, \quad t_1 = \frac{1}{4}, \quad t_2 = \frac{1}{4}, \quad t_3 = \frac{1}{2}, \quad t_4 = 0,
 \end{equation}
the matrix $\om^{-1}_p d^2 H(p)$ has four distinct eigenvalues at each of the points $ p \in \{(N,N), (S,S), (N,S), (S,N)\}$ given by
\begin{align*}
 Eig(N,N) = Eig(S,S) =& \left\{\pm\frac{\mathrm{i}}{8}\sqrt{\frac{21+3\sqrt{33}}{2}},\,\pm\frac{\mathrm{i}}{8}\sqrt{\frac{21-3\sqrt{33}}{2}}\right\} , \\
 Eig(N,S) = Eig(S,N)=& \left\{\left(\sqrt{\frac{5}{32}}\right)\left(\pm\cos\left(\frac{1}{2}\mathrm{arctan}\left(\frac{3\sqrt{31}}{11}	\right)\right)\right.\right.\\
                     &\left.\left. \pm\mathrm{i}\sin\left(\frac{1}{2}\mathrm{arctan}\left(\frac{3\sqrt{31}}{11}\right)\right)\right)\right\},
\end{align*}
and thus 
$p$ is a nondegenerate fixed point according to \refnondegBF. In particular, 
$(N, N)$ and $(S,S)$ are elliptic-elliptic and $(N, S)$ and $(S, N)$ are focus-focus.
\end{corollary}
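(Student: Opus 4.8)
The plan is to lean on Corollary~\ref{discrim}, which already packages $\om^{-1}_p d^2 H(p)$ into a biquadratic characteristic polynomial $\chi(X) = X^4 + bX^2 + c$ whose coefficients, and whose discriminant $\lapla$ in the variable $Y = X^2$, are given explicitly in terms of $\vec R$, $\vec t$, and the signs $(e_1,e_2)$. By Lemma~\ref{rankNull} the four points $(N,N),(S,S),(N,S),(S,N)$ are exactly the rank $0$ points, sitting at the origin of the charts with $z$-signs $(e_1,e_2)=(+,+),(-,-),(+,-),(-,+)$ respectively. The first thing I would record is that, since the chosen parameters have $t_4=0$, the quantities $t_2+e_1 t_4$ and $t_1+e_2 t_4$ collapse to the constants $t_2$ and $t_1$, so the only dependence of $\chi$ on the signs is through the product $e_1 e_2$ entering the cross term $2 e_1 e_2 R_1 R_2 t_3^2$. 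Consequently $(N,N)$ and $(S,S)$ (both with $e_1 e_2=+1$) share one characteristic polynomial, while $(N,S)$ and $(S,N)$ (both with $e_1 e_2=-1$) share another; this is precisely the source of the two coincidences $\mathrm{Eig}(N,N)=\mathrm{Eig}(S,S)$ and $\mathrm{Eig}(N,S)=\mathrm{Eig}(S,N)$ asserted in the statement.

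Next I would substitute $R_1=1,\ R_2=2,\ t_1=t_2=\tfrac14,\ t_3=\tfrac12,\ t_4=0$ into the formulas of Corollary~\ref{discrim} in the two cases $e_1 e_2=\pm1$. For $e_1 e_2=+1$ this gives $b=\tfrac{21}{64}$, $c=\tfrac{9}{1024}$, and $\lapla=\tfrac{297}{4096}>0$, while for $e_1 e_2=-1$ it gives $b=-\tfrac{11}{64}$, $c=\tfrac{25}{1024}$, and $\lapla=-\tfrac{279}{4096}<0$. In the case $e_1 e_2=-1$ the sign $\lapla<0$ immediately yields, via Proposition~\ref{prop_rank0criterion}, that $(N,S)$ and $(S,N)$ are non-degenerate of focus-focus type; to extract the explicit eigenvalues I would write the two conjugate roots $Y_\pm=\tfrac{1}{128}(11\pm 3\mathrm{i}\sqrt{31})$ of the $Y$-quadratic in polar form, using $\abs{Y_\pm}=\tfrac{5}{32}$ and $\arg Y_+=\arctan\!\big(\tfrac{3\sqrt{31}}{11}\big)$, and then take square roots to obtain the four values $\sqrt{\tfrac{5}{32}}\big(\pm\cos(\tfrac12\arctan\tfrac{3\sqrt{31}}{11})\pm\mathrm{i}\sin(\tfrac12\arctan\tfrac{3\sqrt{31}}{11})\big)$, which are exactly of the shape $\pm\al\pm\mathrm{i}\be$.

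The one point needing genuine care --- and the only place where Proposition~\ref{prop_rank0criterion} is insufficient --- is the case $e_1 e_2=+1$. There $\lapla>0$ tells us merely that the point is non-degenerate and not focus-focus, leaving open elliptic-elliptic, hyperbolic-hyperbolic, and hyperbolic-elliptic; since semitoricity forbids hyperbolic blocks, I must actually verify that all four eigenvalues are purely imaginary. This is a statement about the roots $\kappa_\pm=\tfrac12(-b\pm\sqrt\lapla)$ of the $Y$-quadratic: the eigenvalues $X=\pm\sqrt{\kappa_\pm}$ are purely imaginary exactly when both $\kappa_\pm$ are negative reals, which (given $\lapla>0$) holds iff the sum $-b$ is negative and the product $c$ is positive. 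Here $b=\tfrac{21}{64}>0$ and $c=\tfrac{9}{1024}>0$, so $\kappa_\pm=\tfrac{1}{128}(-21\pm 3\sqrt{33})<0$ (using $3\sqrt{33}<21$), whence the eigenvalues are $\pm\mathrm{i}\sqrt{\abs{\kappa_\pm}}=\pm\tfrac{\mathrm{i}}{8}\sqrt{\tfrac{21\mp 3\sqrt{33}}{2}}$, two distinct conjugate imaginary pairs, i.e.\ elliptic-elliptic by Proposition~\ref{prop_eigenvalues}. Finally, in each case $\lapla\neq 0$ forces $\kappa_+\neq\kappa_-$ and hence four distinct eigenvalues, so Lemma~\ref{nondegBF} confirms non-degeneracy at all four points and the classification follows. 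The substitutions themselves are routine; the only real content is the sign analysis of $b$ and $c$ distinguishing the elliptic-elliptic case from the hyperbolic possibilities, so that is where I would be most careful.
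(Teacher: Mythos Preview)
Your proposal is correct and follows the same approach as the paper, which simply says the corollary ``is proved by plugging the values into the criterion in Proposition~\ref{prop_rank0criterion}.'' Your argument is in fact more careful than the paper's one-line justification: you correctly observe that Proposition~\ref{prop_rank0criterion} alone, in the case $\lapla>0$, does not rule out hyperbolic blocks, and you supply the needed sign check ($b>0$, $c>0$, hence both roots $\kappa_\pm<0$) to pin down the elliptic-elliptic type via Proposition~\ref{prop_eigenvalues}. The numerical values you compute for $b$, $c$, $\lapla$, and the eigenvalues all check out.
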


Since nonvanishing and noncoinciding are open conditions, there exist in fact intervals around the parameters \eqref{goodParam} where the systems continues to have two focus-focus and two elliptic-elliptic points.

\begin{proposition}\label{prop_2FFpoints-nbhd}
 There exists an open set $U\subset \R^6$ which contains the point $(1,2,\frac{1}{4},\frac{1}{4},\frac{1}{2},0)$ such that for all $(R_1,R_2,t_1,t_2,t_3,t_4)\in U$ the system given in Equation~\eqref{eqn_thesystem} has elliptic-elliptic points at $(N, N)$ and $(S,S)$ and focus-focus points at $(N, S)$ and $(S, N)$.
\end{proposition}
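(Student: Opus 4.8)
The plan is to leverage the fact that, for each of the four sign choices $(e_1,e_2)\in\{\pm 1\}^2$, the coefficients of the characteristic polynomial of $\om^{-1}_p d^2 H(p)$ computed in Corollary~\ref{discrim} are polynomials --- in particular continuous functions --- of the parameters $(R_1,R_2,t_1,t_2,t_3,t_4)\in\R^6$, and that the Williamson type of the corresponding fixed point is governed entirely by \emph{strict} sign conditions on these coefficients. Since strict inequalities are open conditions, persistence of the types on a neighborhood of the base point $(1,2,\frac{1}{4},\frac{1}{4},\frac{1}{2},0)$ will then be immediate.

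First I would record a precise sign dictionary. By Corollary~\ref{discrim} the characteristic polynomial is biquadratic, $\chi(X)=X^4+bX^2+c$, where $b,c$ are the parameter-dependent coefficients appearing there; hence its roots are $\pm\sqrt{Y_+},\pm\sqrt{Y_-}$, where $Y_\pm=\frac{1}{2}(-b\pm\sqrt{\lapla})$ are the roots of $Y^2+bY+c$ and $\lapla=b^2-4c$ is the discriminant of Equation~\eqref{eqn_discrimiant}. Reading off the cases: the point is focus-focus precisely when $Y_+,Y_-$ form a non-real conjugate pair, i.e.\ when $\lapla<0$ (this is exactly Proposition~\ref{prop_rank0criterion}); and the point is elliptic-elliptic precisely when $Y_+,Y_-$ are distinct negative reals, i.e.\ when the three strict inequalities $\lapla>0$, $b>0$, $c>0$ hold simultaneously --- these force the four eigenvalues $\pm\sqrt{Y_\pm}$ to be distinct and purely imaginary, hence nondegenerate of elliptic-elliptic type by Proposition~\ref{prop_eigenvalues} and Lemma~\ref{nondegBF}.

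Next I would verify these strict inequalities at the base point. For $(e_1,e_2)\in\{(1,-1),(-1,1)\}$, corresponding to $(N,S)$ and $(S,N)$, Corollary~\ref{cor_goodparameters} already exhibits eigenvalues of the form $\al\pm\mathrm{i}\be$ with $\al,\be\neq 0$, so $\lapla<0$ there. For $(e_1,e_2)\in\{(1,1),(-1,-1)\}$, corresponding to $(N,N)$ and $(S,S)$, the same corollary exhibits four \emph{distinct, purely imaginary} eigenvalues; since the roots have the form $\pm\sqrt{Y_\pm}$, this forces $Y_+,Y_-$ to be distinct negative reals, and therefore $\lapla>0$, $b=-(Y_++Y_-)>0$, and $c=Y_+Y_->0$. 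Thus all four collections of strict sign conditions hold at $(1,2,\frac{1}{4},\frac{1}{4},\frac{1}{2},0)$; a direct substitution of \eqref{goodParam} into $b$ and $c$ confirms $b,c>0$ if one wishes to avoid quoting the eigenvalue form.

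Finally, I would take $U$ to be the intersection of the finitely many open subsets of $\R^6$ cut out by these strict inequalities over the four choices of $(e_1,e_2)$; this $U$ is open, contains the base point, and by the sign dictionary every system with parameters in $U$ has $(N,N),(S,S)$ elliptic-elliptic and $(N,S),(S,N)$ focus-focus, as claimed. The one genuinely non-formal point --- the main obstacle --- is that Proposition~\ref{prop_rank0criterion} only yields ``not focus-focus'' from $\lapla>0$, leaving open the elliptic-hyperbolic and hyperbolic-hyperbolic cases; upgrading this to elliptic-elliptic requires the supplementary conditions $b>0$ and $c>0$, which is precisely why one must check the full sign pattern (and not merely $\lapla$) at the base point.
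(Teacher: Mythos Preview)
Your proposal is correct and follows essentially the same approach as the paper. The paper's argument is the single sentence preceding the proposition: ``Since nonvanishing and noncoinciding are open conditions, there exist in fact intervals around the parameters \eqref{goodParam} where the system continues to have two focus-focus and two elliptic-elliptic points''; your version simply unpacks this by naming the explicit open conditions ($\lapla<0$ for focus-focus, and $\lapla>0$, $b>0$, $c>0$ for elliptic-elliptic) and is, if anything, more careful in ruling out the hyperbolic possibilities that Proposition~\ref{prop_rank0criterion} alone does not exclude.
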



\subsection{Rank 1 points} 

We want to study rank 1 points by means of cylindrical coordinates. To avoid the problems with cylindrical coordinates
near poles we state the following observation.

\begin{lemma}
\label{noPole}
 Let $t_1$, $t_2$, $t_3$, $t_4 \in \R$, with $t_3 \neq 0$ and let $R_1$, $R_2 \in \R^{>0}$.
 If $(x_1, y_1, z_1, x_1, y_2, z_2) \in \mbS^2 \x \mbS^2$ is a critical point of rank 1 of \eqref{eqn_thesystem} then $z_1, z_2 \neq \{\pm 1\}$.
\end{lemma}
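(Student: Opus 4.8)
The plan is to argue by contraposition: I would show that any point with $z_1\in\{\pm1\}$ or $z_2\in\{\pm1\}$ has rank $0$ or rank $2$, and hence never rank $1$. Such points split into two cases according to how many of the two factors sit at a pole. If both $z_1,z_2\in\{\pm1\}$, then the point is one of $(N,N),(N,S),(S,N),(S,S)$, which are the rank $0$ points by Lemma~\ref{rankNull}; these are therefore excluded. It remains to treat the case in which exactly one factor is at a pole.

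Suppose $z_1=e_1\in\{\pm1\}$ while $z_2\neq\pm1$; in the chart $\phi$ this means $(x_1,y_1)=(0,0)$ and $(x_2,y_2)\neq(0,0)$. At such a point $\del_{x_1}z_1=\del_{y_1}z_1=0$ by Equations~\eqref{delxi} and~\eqref{delyi}, and I would substitute these, together with the identities $z_i\,\del_{x_i}z_i=-x_i$ and $z_i\,\del_{y_i}z_i=-y_i$, into the explicit formulas~\eqref{HamJ} and~\eqref{HamH}. The two sphere-$1$ components of $X^J$ then vanish, while its sphere-$2$ components reduce to $(-y_2,x_2)/R_2$, which is nonzero since $(x_2,y_2)\neq(0,0)$; thus $X^J\neq0$. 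By contrast, the sphere-$1$ components of $X^H$ reduce to $\bigl(t_3e_1y_2,\,-t_3e_1x_2\bigr)/R_1$, which is nonzero precisely because $t_3\neq0$ and $(x_2,y_2)\neq(0,0)$. Since $X^J$ has vanishing sphere-$1$ components but $X^H$ does not, the two vectors cannot be proportional, and as $X^J\neq0$ this forces $X^J$ and $X^H$ to be linearly independent, so the point has rank $2$. The remaining case $z_2=\pm1$, $z_1\neq\pm1$ follows from the symmetry of the system under simultaneously exchanging $(R_1,z_1,x_1,y_1,t_1)\leftrightarrow(R_2,z_2,x_2,y_2,t_2)$, which leaves both $J$ and $H$ invariant.

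The computation is short, so I do not anticipate a serious obstacle; the only point requiring care is the bookkeeping in~\eqref{HamH} at the pole, and in particular recognizing that the hypothesis $t_3\neq0$ is exactly what the argument needs. Geometrically, the coupling term $t_3(x_1x_2+y_1y_2)$ is what endows $X^H$ with a nonvanishing ``horizontal'' velocity on the polar sphere, even though $X^J$ (being a pure rotation on each factor) has none there; were $t_3=0$, the function $H$ would decouple and such polar points could genuinely have rank $1$. This is the sole place where the nonvanishing of the coupling enters the proof.
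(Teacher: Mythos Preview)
Your argument is correct and reaches the same conclusion as the paper, but by a genuinely different route. The paper works extrinsically: it treats a rank~1 point as a critical point of $H-\lambda J$ on $\mbS^2\times\mbS^2\subset\R^6$ and sets up the Lagrange multiplier system~\eqref{eqn_lagrangemult}; the first two rows then read $t_3x_2=2\mu_1x_1$, $t_3y_2=2\mu_1y_1$, so assuming $z_1=\pm1$ (hence $x_1=y_1=0$) and $t_3\neq0$ forces $x_2=y_2=0$, i.e.\ $z_2=\pm1$, landing at a rank~0 point. You instead work intrinsically in the chart, computing $X^J$ and $X^H$ directly from~\eqref{HamJ}--\eqref{HamH} and observing that at a ``half-polar'' point $X^J$ has vanishing sphere-$1$ components while $X^H$ does not (precisely because $t_3\neq0$), forcing linear independence and hence rank~$2$. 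Your approach is arguably more transparent about \emph{why} the coupling matters---it isolates the nonzero ``horizontal'' velocity $t_3e_1(y_2,-x_2)/R_1$ that $X^H$ acquires on the polar factor---whereas the paper's Lagrange-multiplier setup is coordinate-free and sidesteps the mild chart issue that $\phi_{e_1e_2}$ is not defined when $z_2=0$ (a point you could patch by continuity or by invoking the alternate charts used at the end of the proof of Lemma~\ref{poissonNull}).
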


\begin{proof}
 Critical points of $(J,H)$ from \eqref{eqn_thesystem} are those $p\in\mbS^2\times\mbS^2$ such that
 $dH(p)$ and $dJ(p)$ are linearly dependent, which is equivalent to the existence of
 a nonzero $\lambda\in\R$ such that $d(H-\lambda J)(p)=0$
 since $dJ=0$ only occurs at the rank 0 points.
 Defining
 $f_1, f_2: \R^6 \to \R$ by $f_i(x_1, y_1, z_1, x_2, y_2, z_2) := x_i^2 + y_i^2 +z_i^2$ for $i=1,2$, this is equivalent 
 to looking for critical points of $H - \lam J: \R^6 \to \R$ on the set $f_1^{-1}(1) \cap f^{-1}_2(1)$, 
 i.e., critical points can be computed by means of Lagrangian multipliers, i.e., a critical point 
 $p:=(x_1, y_1, z_1, x_1, y_2, z_2)$ satisfies the equations
 \begin{align*}
 \left\{
\begin{aligned}
& \nabla H(p) = \lam \nabla J(p) + \mu_1 \nabla f_1(p) + \mu_2 \nabla f_2(p) ,   \\
& x_1^2 + y_1^2 + z_1^2 = 1 , \\
& x_2^2 + y_2^2 + z_2^2 = 1 
\end{aligned}
\right.
\end{align*}
for some $\lambda, \mu_1, \mu_2 \in \R$. Using the gradient with respect to the Euclidean metric, we obtain
\begin{align}\label{eqn_lagrangemult}
 \begin{pmatrix}
  t_3 x_2 \\ t_3 y_2 \\ t_1 + t_4 z_2 \\ t_3 x_1 \\ t_3 y_1 \\ t_2 + t_4 z_1
 \end{pmatrix}
=
\begin{pmatrix}
 0 \\ 0 \\ \lam R_1 \\ 0 \\ 0 \\ \lam R_2 
\end{pmatrix}
+
\begin{pmatrix}
 2 \mu_1 x_1 \\ 2 \mu_1 y_1 \\ 2 \mu_1 z_1 \\ 0 \\ 0 \\ 0 
\end{pmatrix}
+
\begin{pmatrix}
0 \\ 0 \\ 0 \\ 2 \mu_2 x_2 \\ 2 \mu_2 y_2 \\ 2 \mu_2 z_2 
\end{pmatrix}.
\end{align}
Recall that the rank 0 points are precisely those with $z_1\in\{\pm 1\}$ and $z_2\in\{\pm 1\}$ simultaneously.
Suppose that $z_1\in\{\pm 1\}$ which implies $x_1 = y_1=0$ since $(x_1,y_2,z_1)\in\mbS^2$.
Then, recalling that $t_3 \neq 0$, we see that Equation~\eqref{eqn_lagrangemult} implies $x_2 = y_2 =0$ which 
in turn implies
$z_2\in\{\pm 1\}$ so the only solution is in fact a rank 0 point.
The same argument works if we assume $z_2\in\{\pm 1\}$.
\end{proof}

We now introduce cylindrical coordinates on $\mbS^2 \x \mbS^2$ via 
\begin{equation*}
 (x_i, y_i, z_i) \mapsto \begin{pmatrix}\sqrt{1-z_i^2} \cos (\theta_i), & \sqrt{1-z_i^2} \sin (\theta_i), & z_i \end{pmatrix}
\end{equation*}
where $ i \in \{1, 2\}$ and $\theta_i$ is the counterclockwise angle between the $x_i$-axis and $(x_i, y_i)$ in $\R^2$. In these coordinates, the system \eqref{eqn_thesystem} becomes
\begin{align*}
 J(\theta_1, z_1, \theta_2, z_2) & = R_1 z_1 + R_2 z_2, \\
 H(\theta_1, z_1, \theta_2, z_2) & = t_1 z_1 + t_2 z_2 + t_3\sqrt{(1-z_1^2)(1-z_2^2)} \cos(\theta_1 - \theta_2) + t_4 z_1 z_2
\end{align*}
and the symplectic form is
\begin{equation}\label{eqn_symplform-cylindrical}
 \om = R_1 dz_1\wedge d\theta_1 +R_2 dz_2\wedge d\theta_2.
\end{equation}
According to \refnoPole\ these coordinates are valid where rank 1 
points may occur (if the rank 1 point occurs at the discontinuity of $\theta_i$, then shift the domain of
definition of $\theta_i$).
We compute the derivative
\begin{align}\label{eqn_dJ}
 d J(\theta_1, z_1, \theta_2, z_2)   = \begin{pmatrix} 0, & R_1 , & 0, & R_2 \end{pmatrix} 
\end{align}
which never vanishes. 
Therefore we have \refmorse\ at our disposal.


Let us compute the symplectic quotient $(\mbS^2 \x \mbS^2) \sslash \mbS^1$ where the $\mbS^1$-action is induced by $J$. 
Given $c \in\ ]-(R_1 + R_2), (R_1 + R_2)[$, which is the set of regular values of $J$, we can solve for $z_1$
on the level set $J^{-1}(c)$ to find
$$
z_1 = \frac{c- R_2 z_2}{R_1}.
$$
By  Equations~\eqref{eqn_dJ} and~\eqref{eqn_symplform-cylindrical}
we see that 
$X^J = \partial_{z_1}+\partial_{z_2}$ so
the flow of $J$ rotates $\theta_1$ and $\theta_2$ by a common
angle. Thus, the $\mbS^1$\--action produced by the flow 
of $X^J$ preserves the angle difference $\theta_1 - \theta_2$. Now 
consider the chart on the quotient $J^{-1}(c) / \mbS^1$ with coordinates $(\ze, \vartheta)$ given by
\begin{align*}
  \ze:= z_1  \quad \mbox{and} \quad \vartheta := \theta_1 - \theta_2
\end{align*}
where
\[
 -1 < \ze < 1\qquad \textrm{and}\qquad
 \frac{c-R_2}{R_1} < \ze < \frac{c+R_2}{R_1}
\]
since $-1<z_1, z_2<1$. All rank 1 critical points occur in this chart since by \refnoPole\ rank 1 points do not occur when $z_1=\pm 1$
or $z_2 = \pm 1$. We now let $H$ descend to the symplectic quotient $(\mbS^2 \x \mbS^2) \sslash \mbS^1$ where it reads
\begin{align*}
 &H(\ze, \vartheta)\\
  & = t_1 \ze + t_2 \frac{c- R_1 \ze}{R_2} + t_3 \cos(\vartheta) \sqrt{(1-\ze^2)\left(1- \left(\frac{c- R_1 \ze}{R_2}\right)^2\right)} +  t_4 \frac{c- R_1 \ze}{R_2} \ze \\
 & = \frac{t_2 c}{R_2} + \frac{t_1 R_2 - t_2 R_1 + t_4 c}{R_2} \ze - \frac{t_4 R_1}{R_2} \ze^2 + t_3 \cos(\vartheta) \sqrt{(1-\ze^2) \left(1 - \left(\frac{c- R_1\ze}{R_2}\right)^2\right)}.
\end{align*}
We abbreviate the term under the last root by
$$
A(\ze):= A(\ze,c,R_1,R_2) :=(1-\ze^2) \left(1 - \left(\frac{c- R_1\ze}{R_2}\right)^2\right)
$$
and its derivatives with respect to $\ze$ by 
\[
 A' := \partial_\zeta A\qquad \textrm{and}\qquad A'':=\partial_{\zeta\zeta}^2 A. 
\]
We note $A(\ze) \geq 0$ with $A(\ze)=0$ if and only if $\ze = \pm 1$ or $\ze = \frac{c\pm R_2}{R_1}$. 
Because of the bounds on $\ze$ we always have $A(\ze) > 0$. In order to find the critical points of $H$ on the symplectic quotient we calculate the partial derivatives
\begin{align*}
 \del_\vartheta H(\ze, \vartheta) & = -t_3 \sin(\vartheta) \sqrt{A(\ze)}, \\
 \del_\ze H(\ze, \vartheta) & =\frac{t_1 R_2 - t_2 R_1 + t_4 c}{R_2} - \frac{2 t_4 R_1}{R_2} \ze +  t_3 \cos(\vartheta) \frac{A'(\ze)}{2\sqrt{A(\ze)}}.
\end{align*}

\begin{lemma}
\label{critQuotient}
 $(\ze, \vartheta)$ is a critical point of $H$ on the symplectic quotient if and only if
 $$
 \vartheta \in \pi \Z \quad \mbox{and} \quad 0 = t_1 R_2 - t_2 R_1 + t_4 c - 2 t_4 R_1 \ze + t_3 R_2 \cos(\vartheta) \frac{A'(\ze)}{2\sqrt{A(\ze)}}.
 $$
\end{lemma}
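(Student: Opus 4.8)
The plan is to use the two partial-derivative formulas displayed just above the statement. A point $(\ze,\vartheta)$ is a critical point of $H$ on the symplectic quotient if and only if both $\del_\vartheta H(\ze,\vartheta)=0$ and $\del_\ze H(\ze,\vartheta)=0$, so I would simply read off the two asserted conditions from these two equations, one at a time.

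First I would treat $\del_\vartheta H = 0$. Since $\del_\vartheta H(\ze,\vartheta) = -t_3\sin(\vartheta)\sqrt{A(\ze)}$ and we are operating under the standing hypothesis $t_3\neq 0$, together with the fact (already established from the bounds on $\ze$) that $A(\ze)>0$ throughout this chart, the vanishing of $\del_\vartheta H$ forces $\sin(\vartheta)=0$, that is, $\vartheta\in\pi\Z$. This yields the first condition, and since it constrains only $\vartheta$, the two critical-point equations decouple cleanly. Next, imposing $\del_\ze H = 0$ on the displayed formula and clearing the positive factor $R_2$ gives exactly
\[
 0 = t_1 R_2 - t_2 R_1 + t_4 c - 2 t_4 R_1 \ze + t_3 R_2 \cos(\vartheta)\frac{A'(\ze)}{2\sqrt{A(\ze)}},
\]
which is the second condition; here positivity of $A(\ze)$ again guarantees that $A'(\ze)/\sqrt{A(\ze)}$ is well defined, so no boundary subtlety intervenes. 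The converse is immediate: if both displayed conditions hold, then $\sin(\vartheta)=0$ makes $\del_\vartheta H$ vanish and the second equation makes $\del_\ze H$ vanish, so $(\ze,\vartheta)$ is critical.

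I do not expect any genuine obstacle, since the entire content is packaged in the two partial derivatives computed above. The only point worth stating explicitly is that $A(\ze)>0$ on the chart guarantees both that $\sqrt{A(\ze)}\neq 0$ (so that $\del_\vartheta H=0$ genuinely forces $\sin(\vartheta)=0$ rather than being satisfied trivially) and that $H$ is smooth there, so that these are ordinary critical-point equations; combined with $t_3\neq 0$, this is exactly what makes the equivalence go through in both directions.
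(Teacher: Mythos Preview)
Your proposal is correct and follows essentially the same approach as the paper: both use the two displayed partial derivatives, invoke $t_3\neq 0$ and $A(\ze)>0$ to conclude $\sin(\vartheta)=0$ from $\del_\vartheta H=0$, and then read off the second equation from $\del_\ze H=0$ after clearing $R_2$. Your write-up is in fact slightly more careful about the converse direction and the well-definedness of $A'(\ze)/\sqrt{A(\ze)}$.
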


\begin{proof}
 The point $(\ze, \vartheta)$ is critical if and only if $\del_\vartheta H(\ze, \vartheta) =0 = \del_\ze H(\ze, \vartheta)$. 
 Since $A(\ze)$ and $t_3$ are nonzero $ \del_\ze H(\ze, \vartheta) =0$ is equivalent to $\sin(\vartheta)=0$ meaning $\vartheta \in \pi \Z$ and $\cos(\vartheta) = \pm 1$. Together with $\del_\vartheta H(\ze, \vartheta) =0$ we get the desired result.
\end{proof}


\subsection{Integrability}

We consider the system \eqref{eqn_thesystem} in the chart $\phi=\phi_{e_1 e_2}$ defined in \eqref{chart}. By means of \eqref{HamVF}, we obtain as Hamiltonian vector fields in these coordinates
\begin{align*}
 X^{x_i} = -\frac{z_i}{R_i} \del_{ y_i}, \quad 
 X^{y_i} = \frac{z_i}{R_i} \del_{ x_i}, \quad
 X^{z_i} = -\frac{y_i}{R_i} \del_{ x_i}  + \frac{x_i}{R_i}\del_{ y_i}
\end{align*}
for $i \in \{1,2\}$. Moreover, we deduce from \eqref{dz}
\begin{equation*}
 dz_i = \frac{-x_i}{z_i} d x_i + \frac{-y_i}{z_i} d y_i.
\end{equation*}
This yields
\begin{equation}
\label{poissonCalc}
 \left\{ \quad
\begin{aligned}
 \{z_i, x_i\} & = - dz_i(X^{x_i}) = -\frac{y_i}{R_i}, \\  
 \{z_i, y_i\} & = - dz_i(X^{y_i}) =\frac{x_i}{R_i}, \\
 \{z_i, x_j\} & = \{z_i, y_j\} = 0 \quad \mbox{for } i \neq j, \\
  \{z_i, z_i\} & = - dz_i(X^{z_i}) = - \frac{x_i y_i}{R_i z_i} + \frac{x_i y_i}{R_i z_i} =0, \\
  \{z_1, z_2\} & = - dz_1(X^{z_2}) = 0.
\end{aligned}
\right. 
\end{equation}

Now we are ready to show

\begin{lemma}
\label{poissonNull}
 $\{J, H\}= 0$ for all $R_1, R_2 \in \R^{>0}$ and all $t=(t_1, t_2, t_3, t_4) \in \R^4$.
\end{lemma}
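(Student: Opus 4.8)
The plan is to verify $\{J,H\}=0$ by exploiting the bilinearity of the Poisson bracket together with the elementary brackets already computed in Equation~\eqref{poissonCalc}. Since $J = R_1 z_1 + R_2 z_2$ and $H = t_1 z_1 + t_2 z_2 + t_3(x_1 x_2 + y_1 y_2) + t_4 z_1 z_2$, the bracket $\{J,H\}$ expands into a sum of brackets of the basic coordinate functions scaled by the parameters. The key observation is that $\{J, \cdot\}$ generates the flow of the Hamiltonian $S^1$\--action, so it suffices to understand how each summand of $H$ pairs against $z_1$ and $z_2$.

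First I would record the consequences of \eqref{poissonCalc} that are needed. Because the bracket is a derivation in each slot, for any functions on a single sphere factor we have $\{z_i, z_j\} = 0$ for all $i,j$, which immediately kills the contributions of the $t_1 z_1$, $t_2 z_2$, and $t_4 z_1 z_2$ terms of $H$ against $J$. Indeed, $\{J, t_4 z_1 z_2\} = t_4\bigl(z_2\{J,z_1\} + z_1\{J,z_2\}\bigr)$ and each $\{J, z_k\} = R_1\{z_1,z_k\} + R_2\{z_2,z_k\} = 0$. Thus the only potentially nonzero contribution comes from the coupling term $t_3(x_1 x_2 + y_1 y_2)$.

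The heart of the computation is therefore $\{J, t_3(x_1 x_2 + y_1 y_2)\}$. Expanding by the Leibniz rule and using \eqref{poissonCalc}, the relevant brackets are $\{z_1, x_1\} = -y_1/R_1$, $\{z_1, y_1\} = x_1/R_1$, $\{z_2, x_2\} = -y_2/R_2$, $\{z_2, y_2\} = x_2/R_2$, while cross brackets like $\{z_1, x_2\}$ vanish. A short calculation then gives
\begin{align*}
\{J, x_1 x_2\} &= R_1 x_2\{z_1,x_1\} + R_2 x_1\{z_2,x_2\} = -x_2 y_1 - x_1 y_2,\\
\{J, y_1 y_2\} &= R_1 y_2\{z_1,y_1\} + R_2 y_1\{z_2,y_2\} = x_1 y_2 + x_2 y_1,
\end{align*}
so that $\{J, x_1 x_2 + y_1 y_2\} = 0$. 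Assembling the pieces, $\{J,H\} = 0$ identically in all parameters, as claimed. Since the conclusion holds verbatim in the chart $\phi_{e_1 e_2}$ and the four such charts cover $\mbS^2\times\mbS^2$ away from measure\--zero sets where continuity extends the identity, the global statement follows. I do not anticipate a genuine obstacle here: the only point requiring care is bookkeeping the signs in the derivation expansion of the coupling term, where the two contributions $x_1 x_2$ and $y_1 y_2$ must cancel each other rather than vanish individually.
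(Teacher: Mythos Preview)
Your proof is correct and follows essentially the same approach as the paper: both expand $\{J,H\}$ via bilinearity and the Leibniz rule, use the identities in \eqref{poissonCalc} to eliminate the $z_i$ contributions, and then observe the cancellation between $\{J,x_1x_2\}$ and $\{J,y_1y_2\}$. The only minor difference is that you extend from the charts to the equators $z_i=0$ by continuity, whereas the paper instead repeats the computation in alternative graph charts---both are fine.
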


\begin{proof}
 We recall that the Poisson bracket is linear and that we have the identities in \eqref{poissonCalc}. Then we compute in the coordinates given in \eqref{chart}
 \begin{align*}
  \{J, H\} & = \{ R_1 z_1 + R_2 z_2 , \ t_1 z_1 + t_2 z_2 + t_3 (x_1 x_2 + y_1 y_2) + t_4 z_1 z_2 \} \\
  & \stackrel{\eqref{poissonCalc}}{=}  R_1 (t_3\{ z_1, x_1 x_2\} + t_3 \{z_1, y_1 y_2\} + t_4 \{z_1, z_1 z_2\} ) \\
   & \quad + R_2 ( t_3\{ z_2, x_1 x_2\} + t_3 \{z_2, y_1 y_2\} + t_4 \{z_2, z_1 z_2\} ).
 \end{align*}
Since the Poisson bracket satisfies the product rule $\{a, bc\} = \{a,b\} c + \{a,c\}b$ it follows
\begin{align*}
 & \stackrel{\eqref{poissonCalc}}{=} R_1 \left( t_3 \left(\frac{-y_1}{R_1}x_2 + \frac{x_1}{R_1} y_2 \right) \right)
 + R_2 \left( t_3 \left( \frac{-y_2}{R_2} x_1 +  \frac{x_2}{R_2} y_1 \right) \right) 
  = 0.
\end{align*}
The charts in \eqref{chart} are not defined for $z_i=0$. To show $\{J, H\}=0$ there, consider Cartesian coordinates $(x_1, y_1, z_1, x_2, y_2, z_2)$ and choose charts given by 
$$(x_1, z_1, x_2, z_2) \mapsto \left(x_1, e_1\sqrt{1-x_1^2 - z_1^2}, z_1, x_2,  e_2\sqrt{1-x_2^2 - z_2^2}, z_2 \ \right)$$ 
etc. The calculations are completely analogous.
\end{proof}

\begin{proposition}
\label{prop_integrable}
 The system $(\mbS^2\times\mbS^2,\om,(J_{\vec{R}},H_{\vec{t}}))$ given in Equation~\eqref{eqn_thesystem} is integrable for all parameter values
 with $0<R_1<R_2$ and $t_1,t_2,t_3,t_4\in\R$ with $t_3\neq 0$.
\end{proposition}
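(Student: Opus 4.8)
The plan is to verify the two defining conditions of an integrable system on the $4$-dimensional symplectic manifold $(\mbS^2\times\mbS^2,\om)$: first that the components $J_{\vec R}$ and $H_{\vec t}$ Poisson commute, and second that their Hamiltonian vector fields are linearly independent almost everywhere. The first condition is already fully established by \refpoissonNull, which shows $\{J,H\}=0$ for all admissible parameter values; so the only remaining work is the independence condition, and this is where the hypothesis $t_3\neq 0$ will be used. Compactness is immediate since $\mbS^2\times\mbS^2$ is a product of compact spaces, and the smoothness of $J,H$ is clear from their explicit Cartesian expressions.

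First I would observe that $X^J$ and $X^H$ are linearly independent at $p$ exactly when $p$ is \emph{not} a singular point, i.e.\ when $dJ(p)$ and $dH(p)$ are linearly independent. So the claim reduces to showing that the singular set has measure zero (indeed, the set of rank $\leq 1$ points is a proper closed subset). The rank $0$ points were identified in \refrankNull\ as the four isolated points $\{(N,N),(N,S),(S,N),(S,S)\}$, so these contribute nothing to a positive-measure obstruction. For the rank $1$ locus, I would invoke the Lagrange-multiplier characterization already set up in the proof of \refnoPole: a point of rank $\leq 1$ satisfies the system \eqref{eqn_lagrangemult}, and away from the poles one can pass to the cylindrical coordinates $(\ze,\vartheta)$ on the symplectic quotient, where \refcritQuotient\ shows the critical set is cut out by $\vartheta\in\pi\Z$ together with a single additional scalar equation in $\ze$. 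For each fixed regular value $c$ of $J$, this describes a lower-dimensional subset of the quotient, and as $c$ ranges over the interval $\,]-(R_1+R_2),R_1+R_2[\,$ the union of these loci is a set of codimension at least one in $M$, hence of measure zero.

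The role of the hypothesis $t_3\neq 0$ is precisely to guarantee that the coupling term does not degenerate: if $t_3=0$ the two spheres decouple and every point of the form (pole, generic) would be critical, producing a positive-measure singular set, so the condition is genuinely needed. I would make this explicit by noting, as in \refnoPole, that $t_3\neq 0$ forces $z_1,z_2\notin\{\pm1\}$ at rank $1$ points, which is exactly what legitimizes working in the cylindrical chart and confines the singular locus to the proper subset described above.

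The main obstacle is organizing the measure-zero argument cleanly rather than doing any hard computation: one must argue that the family of critical-point equations from \refcritQuotient, parametrized over all regular levels $c$, sweeps out only a codimension-one set in $M$. A convenient way to close this is to note that $X^J$ and $X^H$ fail to be independent only on the closed set where $dJ\wedge dH=0$; since this set is the vanishing locus of a real-analytic (indeed smooth, and on each chart explicitly polynomial-in-$z_i$-and-trigonometric) function that is \emph{not} identically zero — as witnessed by the existence of rank $2$ points, e.g.\ generic points away from the chart singularities where the computations above show the independence condition holds — it has empty interior and measure zero. Combining this with \refpoissonNull\ and the compactness of $M$ yields that $(\mbS^2\times\mbS^2,\om,(J_{\vec R},H_{\vec t}))$ is a compact integrable system for all parameters with $0<R_1<R_2$ and $t_3\neq 0$, completing the proof of \refpoissonNull.
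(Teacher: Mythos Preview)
Your proof is correct and follows essentially the same route as the paper: Poisson commutativity from \refpoissonNull, the four rank~$0$ points from \refrankNull, and the rank~$1$ locus being measure zero via \refnoPole\ and \refcritQuotient\ (the paper sharpens this last step to ``finitely many critical points on each symplectic quotient,'' while you argue by codimension and then offer a clean real-analytic alternative; both work). Two small corrections: your aside that $t_3=0$ would produce a \emph{positive-measure} singular set is not accurate, since $\{\text{pole}\}\times\mbS^2$ has codimension two in $M$ --- the hypothesis $t_3\neq 0$ enters the argument through \refnoPole\ and \refcritQuotient, not because its failure automatically yields a fat singular set; and the last clause should read ``completing the proof of the proposition,'' not ``of \refpoissonNull.''
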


\begin{proof}
 Fix parameter values and let $J = J_{(R_1,R_2)}$ and $H = H_{(t_1, t_2, t_3, t_4)}$
 with $t_3\neq 0$.
 In Lemma~\ref{poissonNull} we showed that $\{J,H\}=0$.
 It follows from Lemma~\ref{critQuotient} and the fact that $t_3\neq 0$
 that the rank 1 critical
 points occupy a set of measure zero since there are only finitely many
 on each symplectic quotient.
 By Lemma~\ref{rankNull} there are only finitely many rank 0 points and
 thus $J$ and $H$ are linearly independent almost everywhere.
\end{proof}

\subsection{Nondegeneracy of rank 1 points}

Now we want to study nondegeneracy of the 
rank $1$ critical points. Therefore we have to compute the Hessian of $H$ on the symplectic quotient. We get
\begin{align*}
 \del^2_{\vartheta \vartheta}H(\ze, \vartheta) & =-t_3 \cos(\vartheta) \sqrt{A(\ze)},\\
 \del^2_{\vartheta \ze}H(\ze, \vartheta) & = -t_3 \sin(\vartheta)\frac{A'(\ze)}{2\sqrt{A(\ze)}} ,\\
  \del^2_{\ze \ze} H(\ze, \vartheta) & =- \frac{2 t_4 R_1}{R_2} + \frac{t_3 \cos(\vartheta)}{2} \ \frac{A''(\ze) \sqrt{A(\ze)} - A'(\ze) \frac{A'(\ze)}{2 \sqrt{A(\ze)}}}{A(\ze)} \\
  & =\frac{- 2 t_4 R_1}{R_2} + t_3 \cos(\vartheta) \ \frac{2A''(\ze)A(\ze) - (A'(\ze))^2 }{4 (A(\ze))^{\frac{3}{2}}}
\end{align*}
Now we come to a criterion for nondegeneracy.
%
%
%
Let $pr_c\colon J^{-1}(c)\to J^{-1}(c)/\mathbb{S}^1$ denote the quotient map
for each $c\in J(\mathbb{S}^2\times\mathbb{S}^2)$.
\begin{proposition}[{\bf Rank 1 Criterion}]\label{prop_criterion}
Suppose $p\in \mathbb{S}^2\times\mathbb{S}^2$ is a rank 1 critical point and denote $c = J(p)$ and $pr_c(p) = (\ze,\vartheta)$.
Then $p$ is non-degenerate if and only if $\del^2_{\ze \ze} H(\ze, \vartheta) \neq 0$.
In particular, $p$ is non-degenerate and of elliptic-regular type if
 \[
  \frac{2t_4R_1}{t_3R_2}\cos(\vartheta) > \frac{2A''(\ze)A(\ze) - (A'(\ze))^2 }{4 (A(\ze))^{\frac{3}{2}}},
 \]
 non-degenerate and of hyperbolic-regular type if
 \[
  \frac{2t_4R_1}{t_3R_2}\cos(\vartheta) < \frac{2A''(\ze)A(\ze) - (A'(\ze))^2 }{4 (A(\ze))^{\frac{3}{2}}},
 \]
 and degenerate otherwise.
\end{proposition}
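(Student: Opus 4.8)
The plan is to invoke Lemma~\ref{lem_defnondeg_rank1} and reduce everything to a two-variable computation on the symplectic quotient, using the second derivatives of the reduced Hamiltonian recorded just above.

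First I would note that $dJ=(0,R_1,0,R_2)$ never vanishes by Equation~\eqref{eqn_dJ}, so $dJ\neq 0$ at the rank 1 point $p$ and Lemma~\ref{lem_defnondeg_rank1} applies: $p$ is non-degenerate if and only if the Hessian $d^2H$ of the reduced Hamiltonian is invertible at $pr_c(p)=(\ze,\vartheta)$ on the two-dimensional quotient $J^{-1}(c)/\mbS^1$. As this quotient is two-dimensional, invertibility is the nonvanishing of the $2\times 2$ Hessian determinant assembled from the formulas for $\del^2_\ze H$, $\del^2_{\vartheta\ze}H$, and $\del^2_\vartheta H$ computed above. By Lemma~\ref{critQuotient} a critical point satisfies $\vartheta\in\pi\Z$, hence $\sin(\vartheta)=0$; therefore the mixed term $\del^2_{\vartheta\ze}H=-t_3\sin(\vartheta)\frac{A'(\ze)}{2\sqrt{A(\ze)}}$ vanishes and the Hessian is diagonal, with determinant $\del^2_\ze H\cdot\del^2_\vartheta H$.

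Next I would observe that $\del^2_\vartheta H=-t_3\cos(\vartheta)\sqrt{A(\ze)}$ is nonzero, since $t_3\neq 0$, $\cos(\vartheta)=\pm 1$ (as $\vartheta\in\pi\Z$), and $A(\ze)>0$ by the bounds on $\ze$ established earlier. Hence the determinant vanishes exactly when $\del^2_\ze H=0$, which already yields the non-degeneracy criterion: $p$ is non-degenerate if and only if $\del^2_\ze H(\ze,\vartheta)\neq 0$. To distinguish the elliptic from the hyperbolic case I would pass to the linearized reduced flow. Restricting $\om$ to $J^{-1}(c)$ and using $R_1\,dz_1+R_2\,dz_2=0$ there, one finds that the reduced symplectic form in the coordinates $(\ze,\vartheta)$ is $\om_{\mathrm{red}}=R_1\,d\ze\wedge d\vartheta$. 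The Williamson type of the singular block is then read from the eigenvalues of $\om_{\mathrm{red}}^{-1}d^2H$; because $d^2H$ is diagonal these eigenvalues $\lambda$ satisfy $\lambda^2=-\frac{1}{R_1^2}\,\del^2_\ze H\cdot\del^2_\vartheta H$, so $p$ is elliptic-regular when the product $\del^2_\ze H\cdot\del^2_\vartheta H$ is positive and hyperbolic-regular when it is negative. Substituting $\del^2_\vartheta H=-t_3\cos(\vartheta)\sqrt{A(\ze)}$ and factoring out the positive quantity $t_3^2\sqrt{A(\ze)}$ converts these sign conditions into the two displayed inequalities, the borderline case being the degenerate one identified above.

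I expect the only delicate steps to be the short verification that the reduced symplectic form carries the factor $R_1$ (so that the elliptic/hyperbolic dichotomy is governed purely by the sign of $\del^2_\ze H\cdot\del^2_\vartheta H$) and the careful tracking of signs and of the factor $t_3^2$ when passing from the product to the stated inequalities; all of the conceptual input is already supplied by Lemma~\ref{lem_defnondeg_rank1}, Lemma~\ref{critQuotient}, and the second-derivative formulas preceding the statement.
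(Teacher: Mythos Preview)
Your proposal is correct and follows essentially the same route as the paper: both reduce to the two-dimensional quotient via Lemma~\ref{lem_defnondeg_rank1}, compute $\om_{\mathrm{red}}=R_1\,d\ze\wedge d\vartheta$, use Lemma~\ref{critQuotient} to kill the mixed partial, and read off the type from the sign of $\del^2_\ze H\cdot\del^2_\vartheta H$. The only cosmetic difference is that the paper factors out $\sqrt{A(\ze)}$ before stating the inequality while you factor out $t_3^2\sqrt{A(\ze)}$ directly, which is harmless since $t_3^2>0$.
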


\begin{proof}
We start by computing the symplectic form on the symplectic quotient.
 Let $j\colon J^{-1}(c)\to \mathbb{S}^2\times\mathbb{S}^2$ be the inclusion map.
 Recall $\om = \sum_{i=1}^2 R_i dz_i\wedge d\theta_i$ so we have
\[
  j^*\om = R_1 dz_1\wedge d\theta_1 + R_2d\left(\frac{c-R_1 z_1}{R_2}\right)\wedge d\theta_2
  = R_1 dz_1 \wedge d(\theta_1-\theta_2)
\]
 and thus on the reduced space $\mathbb{S}^2\times\mathbb{S}^2\sslash \mathbb{S}^1$ in the coordinates $(\zeta,\vartheta)$ we have the symplectic form
 \[
  \om_{\mathrm{red}} = R_1 d\zeta\wedge d\vartheta
\qquad
 \textrm{with matrix}
\qquad
  \om_{\mathrm{red}} = R_1 \begin{pmatrix} 0 & 1 \\ -1 & 0\end{pmatrix}.
 \]
 Since $(\zeta, \vartheta)$ is a critical point \refcritQuotient\ implies that $\sin(\vartheta)=0$,
 so $\partial_{\vartheta \zeta} H(\zeta,\vartheta) = 0$ and thus
 \begin{align*}
  \om_{\mathrm{red}}^{-1}d^2 H(\zeta, \vartheta) &= \frac{1}{R_1}\begin{pmatrix}0&-1\\1&0\end{pmatrix}\begin{pmatrix}\partial^2_{\zeta \zeta} H(\zeta,\vartheta) & 0\\ 0 & \partial^2_{\vartheta \vartheta} H(\zeta,\vartheta)\end{pmatrix}\\
                                                 &=\frac{1}{R_1}\begin{pmatrix} 0 & -\partial_{\theta \theta}^2 H(\zeta,\vartheta)\\ \partial_{\zeta \zeta}^2 H(\zeta,\vartheta) & 0 \end{pmatrix}
 \end{align*}
 which has eigenvalues
 \[
  \lambda_{\pm} = \pm\frac{1}{R_1}\sqrt{-\partial^2_{\zeta \zeta} H(\zeta,\vartheta) \partial^2_{\vartheta \vartheta} H(\zeta, \vartheta)}.
 \]
 Since $\cos(\vartheta) = \pm 1$ we see that $\partial^2_{\vartheta \vartheta} H(\ze,\vartheta)\neq 0$ and so the eigenvalues
 are distinct if and only if $\partial^2_{\ze \ze} H(\ze,\vartheta)\neq 0$, establishing the first part of the claim.
 To complete the proof we notice that $\lambda_\pm$ are purely imaginary if $\partial^2_{\zeta \zeta} H(\zeta,\vartheta) \partial^2_{\vartheta \vartheta} H(\zeta, \vartheta)>0$, which implies that $p$ is elliptic-regular, and purely real otherwise,
 implying that $p$ is hyperbolic-regular.
 We compute
 \begin{align*}
  &\partial^2_{\zeta \zeta} H(\zeta,\vartheta) \partial^2_{\vartheta \vartheta} H(\zeta, \vartheta) \\
     &=\left(\frac{- 2 t_4 R_1}{R_2} + t_3 \cos(\vartheta) \ \frac{2A''(\ze)A(\ze) - (A'(\ze))^2 }{4 (A(\ze))^{\frac{3}{2}}}\right)\left( -t_3 \cos(\vartheta) \sqrt{A(\ze)}\right)\\
     &=\sqrt{A(\ze)} \left(\frac{2 t_3 t_4 R_1}{R_2} \cos(\vartheta) - t_3^2  \ \frac{2A''(\ze)A(\ze) - (A'(\ze))^2 }{4 (A(\ze))^{\frac{3}{2}}}\right),
 \end{align*}
 and the result follows because $\sqrt{A(\ze)}>0$ for the bounds on $\ze$.
\end{proof}

The following is established by plugging the specific values
into the inequality from Proposition~\ref{prop_criterion}
(for more details see the proof of Lemma~\ref{lem_rank1nondegen}).

\begin{corollary}\label{cor_rank1nondegents}
 For the parameter values
 \begin{equation}
  R_1 = 1, \quad R_2 = 2, \quad t_1 = \frac{1}{4}, \quad t_2 = \frac{1}{4}, \quad t_3 = \frac{1}{2}, \quad t_4 = 0,
 \end{equation}
 all rank 1 points are non-degenerate and of elliptic-regular type.
\end{corollary}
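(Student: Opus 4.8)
The plan is to invoke the Rank 1 Criterion (Proposition~\ref{prop_criterion}) and to show that for the listed parameter values no rank 1 critical point can be degenerate. The key simplification is that $t_4 = 0$, so the left-hand side $\frac{2t_4R_1}{t_3R_2}\cos(\vartheta)$ of the criterion vanishes identically. Hence a rank 1 critical point $(\ze,\vartheta)$ is degenerate precisely when $2A''(\ze)A(\ze) - (A'(\ze))^2 = 0$, and I will argue that this never happens along the critical set.

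First I would specialize the critical-point equation of Lemma~\ref{critQuotient} to the given values. With $R_1 = 1$, $R_2 = 2$, $t_1 = t_2 = \tfrac14$, $t_3 = \tfrac12$, $t_4 = 0$ it collapses to $\cos(\vartheta)\frac{A'(\ze)}{2\sqrt{A(\ze)}} = -\tfrac14$; since $\cos(\vartheta) = \pm 1$, squaring removes the sign and yields the single relation $(A'(\ze))^2 = \tfrac14 A(\ze)$ at every rank 1 critical point. Substituting this into the degeneracy equation $2A''(\ze)A(\ze) - (A'(\ze))^2 = 0$ and using that $A(\ze) > 0$ on the admissible range of $\ze$ (which, by Lemma~\ref{noPole}, is where rank 1 points live), I obtain that a degenerate rank 1 point would have to satisfy simultaneously $(A'(\ze))^2 = \tfrac14 A(\ze)$ and $A''(\ze) = \tfrac18$.

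It then remains to show this system has no admissible solution. Recalling $A(\ze) = (1-\ze^2)\bigl(1 - (\tfrac{c-\ze}{2})^2\bigr)$, it is convenient to set $z_1 = \ze$ and $z_2 = \tfrac{c-\ze}{2}$, so that $A = (1-z_1^2)(1-z_2^2)$ and $c = z_1 + 2z_2$. A direct computation converts $A''(\ze) = \tfrac18$ into the conic $4z_1^2 - 32 z_1 z_2 + 16 z_2^2 = 21$ and converts $(A'(\ze))^2 = \tfrac14 A(\ze)$ into a second polynomial equation in $(z_1,z_2)$. One then checks, by eliminating one variable through a resultant (or by the symbolic and numerical computations used elsewhere in the paper), that these two curves have no common point in the open square $-1 < z_1, z_2 < 1$. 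Consequently $A''(\ze) \neq \tfrac18$ at every rank 1 critical point, so $2A''A - (A')^2 \neq 0$ there, and all rank 1 points are non-degenerate.

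The main obstacle is precisely this last step: the explicit elimination verifying that the conic $4z_1^2 - 32 z_1 z_2 + 16 z_2^2 = 21$ and the critical-set equation are incompatible on the admissible region. This is a finite, purely algebraic verification carrying no conceptual difficulty, but it is the only place where the specific numerical values enter, and genuine care is needed to restrict attention to the region $-1 < z_1, z_2 < 1$ in which rank 1 critical points actually occur, since spurious solutions of the polynomial system may well lie outside it.
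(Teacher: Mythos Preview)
Your reduction is correct: with $t_4=0$ the degeneracy condition from Proposition~\ref{prop_criterion} becomes $2A''(\ze)A(\ze)-(A'(\ze))^2=0$, and combining this with the critical-point equation from Lemma~\ref{critQuotient} (which here reads $(A')^2=\tfrac14 A$) yields the pair of conditions $A''=\tfrac18$ and $(A')^2=\tfrac14 A$ whose incompatibility on the admissible region would finish the argument. So the strategy is sound, and your acknowledgment that the final elimination is the genuine work is fair.

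The paper takes a shorter route. It does \emph{not} invoke the critical-point equation at all: since $t_4=0$ the left-hand side of the inequality in Proposition~\ref{prop_criterion} vanishes, and one simply checks that the right-hand side
\[
\frac{2A''(\ze)A(\ze)-(A'(\ze))^2}{4(A(\ze))^{3/2}}
\]
is strictly negative for \emph{every} admissible $(\ze,c)$, not just at the critical points. This is the same quantity that is plotted in Figure~\ref{newBplots} (in the proof of Lemma~\ref{lem_rank1nondegen}), where it is seen to lie below $-1$ throughout. The advantages of this direct approach are that (i) it sidesteps the need to locate the critical set and then run a resultant/elimination argument, replacing two coupled polynomial conditions by a single sign check of one two-variable function; and (ii) it simultaneously identifies the Williamson type of the rank~1 points as elliptic-regular, which the non-degeneracy statement alone does not give. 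Your approach, by contrast, extracts more algebraic structure and would in principle lend itself to a fully symbolic proof, but at the cost of a harder final verification.
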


%


\section{A linear combination of systems of toric type}
\label{sec_s1s2}


In this section we apply the results of the previous section to a special choice of parameters
of the system.
Let $\vec{s} := (s_1,s_2)\in[0,1]^2$ and consider the system $(J_{\vec{R}}, H_{\vec{s}})$ on $\mbS^2\times\mbS^2$ using the same $J_{\vec{R}}$ as before
but using $H_{\vec{s}} :=H_{\vec{t}}$ where 
\[
t_1 = (1-s_1)(1-s_2),\qquad t_2 = s_1 s_2,\qquad t_3= s_1+s_2-2 s_1 s_2,\qquad t_4 = s_1-s_2,
\]
i.e., we consider
\begin{equation}\label{eqn_s1s2system}
 \left\{
  \begin{aligned}
   J_{\vec{R}}(x_1, y_1, z_1 , x_2, y_2, z_2) := \,& R_1 z_1 + R_2 z_2 , \\
   H_{\vec{s}}(x_1, y_1, z_1 , x_2, y_2, z_2)  := \, & (1-s_1)(1-s_2) z_1 + s_1 s_2 z_2 \\&+s_1(1-s_2)(x_1 x_2 + y_1 y_2+z_1 z_2) \\& + s_2(1-s_1)(x_1 x_2 + y_1 y_2 - z_1 z_2).
  \end{aligned}
 \right.
\end{equation}

Thinking of $R_1$ and $R_2$ as fixed, this produces a 
two parameter family of systems $\{(J_{\vec{R}}, H_{(s_1,s_2)})\mid s_1,s_2\in[0,1]\}$.
This family is of interest because it shows the system $(J_{\vec{R}}, H_{\left(\frac{1}{2},\frac{1}{2}\right)})$,
which is a semitoric integrable system with exactly two focus-focus
points by Theorem~\ref{mainThm}, as a linear
combination of systems of toric type.
The systems $(J_{\vec{R}},H_{(0,0)})$, $(J_{\vec{R}},H_{(0,1)})$, $(J_{\vec{R}},H_{(1,0)})$, 
and $(J_{\vec{R}},H_{(1,1)})$ are systems of toric type whose associated polygons
agree (as subsets of $\R^2$) with four elements of the semitoric polygon of the 
semitoric system $(J_{\vec{R}},H_{\left(\frac{1}{2},\frac{1}{2}\right)})$.
The images of the momentum maps for these systems are shown in Figure~\ref{fig_array}
and a plot describing the number of focus-focus points for different values of
$s_1,s_2\in [0,1]$ is show in Figure~\ref{fig_FFs1s2}.

In the following series of lemmas we apply the various general results 	developed in Section~\ref{sec_familyofsystems}
to the special case of the system~\eqref{eqn_s1s2system}.

\begin{lemma}\label{lem_s1s2integrable}
 For any choice of parameters $s_1, s_2\in [0,1]$ the system in Equation~\eqref{eqn_s1s2system}
 is integrable.
\end{lemma}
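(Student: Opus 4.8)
The plan is to reduce to the general integrability result, Proposition~\ref{prop_integrable}, wherever its hypotheses hold, and to dispatch the finitely many exceptional parameters directly. First I would observe that \eqref{eqn_s1s2system} is literally the system \eqref{system} with
\[
t_1 = (1-s_1)(1-s_2),\quad t_2 = s_1 s_2,\quad t_3 = s_1 + s_2 - 2s_1 s_2,\quad t_4 = s_1 - s_2,
\]
so that Lemma~\ref{poissonNull}, which holds for \emph{all} $\vec{t}\in\R^4$ with no restriction on $t_3$, already gives $\{J_{\vec{R}}, H_{\vec{s}}\} = 0$. Hence Poisson commutativity is automatic, and the only thing left to verify is that $X^{J_{\vec{R}}}$ and $X^{H_{\vec{s}}}$ are linearly independent almost everywhere.

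Next I would split on the vanishing of $t_3$. Proposition~\ref{prop_integrable} yields integrability immediately whenever $t_3 = s_1 + s_2 - 2 s_1 s_2 \neq 0$, so it remains to locate the zero set of $t_3$ inside the parameter square. Solving $s_1 + s_2 = 2 s_1 s_2$ (for fixed $s_1$ one gets $s_2 = s_1/(2s_1 - 1)$, which lies in $[0,1]$ only at the endpoints) shows that within $[0,1]^2$ the equation $t_3 = 0$ holds \emph{only} at the two corners $(s_1,s_2) = (0,0)$ and $(s_1,s_2) = (1,1)$.

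Finally I would treat these two corners by hand. At $(0,0)$ the Hamiltonian collapses to $H_{\vec{s}} = z_1$ and at $(1,1)$ to $H_{\vec{s}} = z_2$, so in either case the pair becomes $(R_1 z_1 + R_2 z_2,\, z_i)$ with $i \in \{1,2\}$. Writing $X^{J_{\vec{R}}} = R_1 X^{z_1} + R_2 X^{z_2}$ and $X^{H_{\vec{s}}} = X^{z_i}$, a linear dependence $a\,X^{J_{\vec{R}}} + b\,X^{H_{\vec{s}}} = 0$ with $(a,b)\neq 0$ rearranges to a relation between $X^{z_1}$ and $X^{z_2}$; since these are tangent to different factors of the product they are independent wherever both are nonzero, and each $X^{z_i}$ vanishes exactly at the poles $z_i \in \{\pm 1\}$ of its own factor. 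Thus dependence can occur only on the set where $z_1 \in \{\pm 1\}$ or $z_2 \in \{\pm 1\}$, which has measure zero, so $X^{J_{\vec{R}}}$ and $X^{H_{\vec{s}}}$ are independent almost everywhere and the two corner systems are integrable as well.

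The step I expect to be the main (if modest) obstacle is precisely this boundary analysis: the clean hypothesis $t_3 \neq 0$ of Proposition~\ref{prop_integrable} fails exactly at $(0,0)$ and $(1,1)$, so the crux is to confirm that $t_3$ vanishes \emph{only} there on $[0,1]^2$ and then to establish integrability of the two resulting (purely toric) systems by the direct vector-field computation above, rather than through the general rank-1 count of Lemma~\ref{critQuotient}, which is unavailable when $t_3 = 0$.
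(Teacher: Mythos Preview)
Your proof is correct and follows essentially the same approach as the paper: reduce to Proposition~\ref{prop_integrable} when $t_3\neq 0$, identify the exceptional parameters as exactly $(0,0)$ and $(1,1)$, and handle those two corners separately. The only difference is that where the paper simply asserts the corner systems are ``known to be toric integrable systems,'' you supply a direct vector-field argument for almost-everywhere independence; this makes your version slightly more self-contained but does not alter the strategy.
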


\begin{proof}
 Recall that~\eqref{eqn_s1s2system} is a special case of~\eqref{eqn_thesystem} with $t_3 = s_1+s_2-2s_1s_2$,
 so by Proposition~\ref{prop_integrable} we know the result holds for all $s_1,s_2\in [0,1]$ such that
 $s_1+s_2-2s_1s_2\neq 0$.  This only leaves the cases of $s_1 = s_2 = 0$ and $s_1 = s_2 = 1$.
 The case $s_1 = s_2 = 0$ leads to the system
 \[
  J_{(1,2)}(x_1, y_1, z_1, x_2, y_2, z_2) = z_1 + 2 z_2,\qquad H_{(0,0)} (x_1, y_1, z_1, x_2, y_2, z_2)=z_1
 \]
 and the case $s_1 = s_2 = 1$ to the system 
 \[
  J_{(1,2)}(x_1, y_1, z_1, x_2, y_2, z_2) = z_1 + 2 z_2,\qquad H_{(1,1)} (x_1, y_1, z_1, x_2, y_2, z_2)=z_2,
 \]
 which are each known to be toric integrable systems.
\end{proof}

\begin{lemma}
\label{lem_rank1nondegen}
 For any choice of parameters $s_1, s_2\in [0,1]$, all rank 1 critical points of $(J_{(1,2)},H_{(s_1,s_2)})$
  are nondegenerate and of elliptic-regular type.
\end{lemma}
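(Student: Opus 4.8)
The plan is to apply the Rank~1 Criterion (Proposition~\ref{prop_criterion}) to the specific coefficients $t_1=(1-s_1)(1-s_2)$, $t_2=s_1s_2$, $t_3=s_1+s_2-2s_1s_2$, $t_4=s_1-s_2$ with $R_1=1$, $R_2=2$, and to verify that the elliptic-regular inequality holds at \emph{every} rank~1 critical point. First I would dispose of the two values $(s_1,s_2)\in\{(0,0),(1,1)\}$, which are exactly the cases $t_3=0$: there the system is toric by Lemma~\ref{lem_s1s2integrable}, so all of its singular points — in particular the rank~1 ones — are nondegenerate of elliptic type, and there is nothing further to check. For every remaining $(s_1,s_2)$ we have $t_3\neq 0$, so by Lemma~\ref{noPole} each rank~1 point lies in the chart $(\ze,\vartheta)$, and by Proposition~\ref{prop_criterion} such a point is nondegenerate and elliptic-regular precisely when
\[
 \frac{2t_4R_1}{t_3R_2}\cos(\vartheta) > (\sqrt{A})''(\ze), \qquad (\sqrt{A})'' = \frac{2A''A-(A')^2}{4A^{3/2}}.
\]
Thus the entire lemma reduces to establishing this one inequality along the critical set.

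The decisive structural input is that $\sqrt{A}$ is \emph{strictly concave} on the open interval where $A>0$, i.e.\ $(\sqrt{A})''<0$ throughout. I would prove this by writing $\sqrt{A(\ze)}=\phi(\ze)\psi(\ze)$ with $\phi=\sqrt{1-\ze^2}$ and $\psi=\sqrt{1-z_2^2}$, $z_2=(c-R_1\ze)/R_2$. A direct computation gives $\phi''=-\phi^{-3}$ and $\psi''=-k^2\psi^{-3}$ with $k=R_1/R_2$, hence
\[
 (\phi\psi)'' = -\frac{\psi}{\phi^3} - \frac{2k\ze z_2}{\phi\psi} - \frac{k^2\phi}{\psi^3}.
\]
The outer two terms are negative; for the middle term I would apply AM-GM, $\frac{\psi}{\phi^3}+\frac{k^2\phi}{\psi^3}\ge\frac{2k}{\phi\psi}$, together with $|\ze|<1$ and $|z_2|<1$ (both valid since $A>0$) to get $\frac{2k}{\phi\psi}>\frac{2k|\ze z_2|}{\phi\psi}$, so the two negative terms outweigh the middle one and $(\sqrt A)''<0$.

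With $(\sqrt A)''<0$ secured, the criterion inequality is immediate whenever $t_4t_3\cos(\vartheta)\ge 0$, since then its left side is $\ge 0$ while its right side is $<0$. The main obstacle is the opposite-sign regime $t_4t_3\cos(\vartheta)<0$, which genuinely occurs (for instance along one of the meridians $\vartheta\in\pi\Z$ when $s_1\neq s_2$): here concavity alone does not suffice, and one must show that the strictly negative quantity $t_3^2(\sqrt A)''$ still dominates the negative left side. To do this I would insert the critical-point relation of Lemma~\ref{critQuotient}, which at a critical point reads $L'(\ze)=-t_3\cos(\vartheta)(\sqrt A)'(\ze)$, where $L$ denotes the quadratic-plus-affine part of the reduced Hamiltonian, with $L''=-2t_4R_1/R_2$. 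Substituting this to eliminate $\cos(\vartheta)$ and clearing the positive factor $((\sqrt A)')^2$ converts the criterion into the equivalent constrained estimate
\[
 -\frac{2t_4R_1}{R_2}\,L'(\sqrt A)' - (L')^2(\sqrt A)'' > 0,
\]
that is $\tfrac{2t_4R_1}{R_2}\,|L'|\,|(\sqrt A)'| < (L')^2\,|(\sqrt A)''|$, a bound coupling the first and second derivatives of $\sqrt A$ at a constrained value of $\ze$. The remaining task is to verify this estimate for the explicit $(s_1,s_2)$-dependent coefficients; using the closed forms for $(\sqrt A)'$ and $(\sqrt A)''$ above and clearing denominators, it becomes a polynomial inequality in $\ze,s_1,s_2,c$ over the semialgebraic region cut out by $A>0$ and the critical-point equation, which I expect to settle by a direct (possibly computer-algebra assisted) positivity check. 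I anticipate that this constrained derivative estimate — not the concavity step and not the toric corners — will be the crux of the argument.
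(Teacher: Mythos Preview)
Your setup matches the paper's---toric corners $(0,0),(1,1)$ disposed of first, then Proposition~\ref{prop_criterion}---but the paper's route to the inequality is far shorter than yours and avoids the ``crux'' you flag entirely. The paper simply bounds the two sides separately: for $(s_1,s_2)\in[0,1]^2\setminus\{(0,0),(1,1)\}$ one has $|s_1-s_2|\le s_1+s_2-2s_1s_2$ (since, say for $s_1\ge s_2$, the difference is $2s_2(1-s_1)\ge0$), so the left side $\tfrac{2t_4R_1}{t_3R_2}\cos\vartheta=\tfrac{s_1-s_2}{s_1+s_2-2s_1s_2}\cos\vartheta$ lies in $[-1,1]$; and the right side $(\sqrt A)''(\ze)$ is shown, by a Mathematica plot over the whole $(\ze,c)$-domain, to lie strictly below $-1$. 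That finishes the inequality globally, with no casework on signs and no use of the critical-point relation.

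Your AM-GM argument is nice but you stop too early. One more line gives the paper's numerical bound analytically: from your identity $(\sqrt A)''=-\psi/\phi^3-2k\ze z_2/(\phi\psi)-k^2\phi/\psi^3$ and AM-GM you get $(\sqrt A)''\le -2k(1+\ze z_2)/(\phi\psi)$, and with $k=R_1/R_2=\tfrac12$ the identity $(1+\ze z_2)^2-(1-\ze^2)(1-z_2^2)=(\ze+z_2)^2\ge0$ yields $(1+\ze z_2)/(\phi\psi)\ge1$, hence $(\sqrt A)''\le-1$, strictly since equality would force $k=1$. Had you taken this step, the entire ``opposite-sign regime'' and the constrained derivative estimate would evaporate. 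As it stands your proposal is incomplete: the final positivity check is only anticipated, not carried out, and your ``clearing the positive factor $((\sqrt A)')^2$'' silently assumes $(\sqrt A)'(\ze)\neq0$, which is not guaranteed at a rank~1 critical point.
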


\begin{proof}
 The cases of $s_1 = s_2 = 0$ and $s_1 = s_2 = 1$ produce toric systems as described in the proof of Lemma~\ref{lem_s1s2integrable},
 so all rank 1 points in these systems are non-degenerate and of elliptic-regular type.
 Now consider $(s_1,s_2)\in [0,1]^2\setminus \{(0,0), (1,1)\}$ which implies $s_1+s_2-2s_1s_2>0$.
 Substituting $R_1 = 1$, $R_2 = 2$, $t_3 = s_1+s_2-2s_1s_2$ and $t_4 = s_1-s_2$ 
 into the criterion in Proposition~\ref{prop_criterion} we see that it is sufficient to show
 \begin{equation}\label{eqn_s1s2rank1}
  \frac{s_1-s_2}{s_1+s_2-2s_1s_2}\cos(\vartheta) > \frac{2A''(\ze)A(\ze) - (A'(\ze))^2 }{4 (A(\ze))^{\frac{3}{2}}}.
 \end{equation}
 Standard calculus shows that the value of the left-hand-side of Equation~\eqref{eqn_s1s2rank1} is
 in the interval $[-1,1]$ for all $s_1,s_2,\vartheta$ and the value of the right-hand-side of
 Equation~\eqref{eqn_s1s2rank1} can be seen to be in the interval $]-\infty,-1[	$ 
 for all $(\ze,c)\in \ ]-1,1[ \ \times\ ]-3,3[$ by plotting it in Mathematica
 (see Figure~\ref{newBplots}), so the inequality is verified.
\end{proof}

\begin{figure}[h]
\centering
    \begin{subfigure}[b]{0.4\textwidth}
        \includegraphics[width=\textwidth]{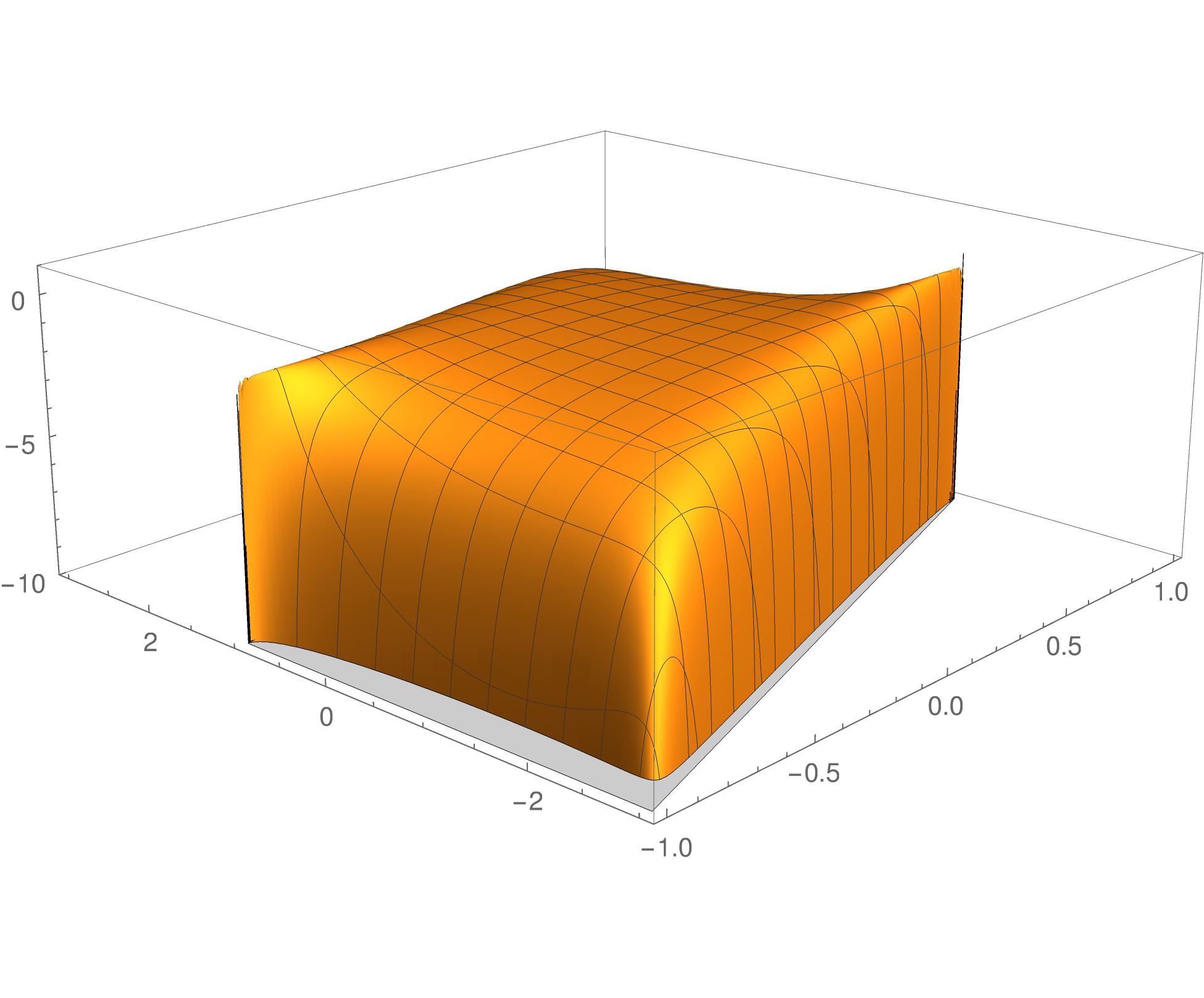}
    \end{subfigure}
    \qquad \quad
    \begin{subfigure}[b]{0.3\textwidth}
        \includegraphics[width=\textwidth]{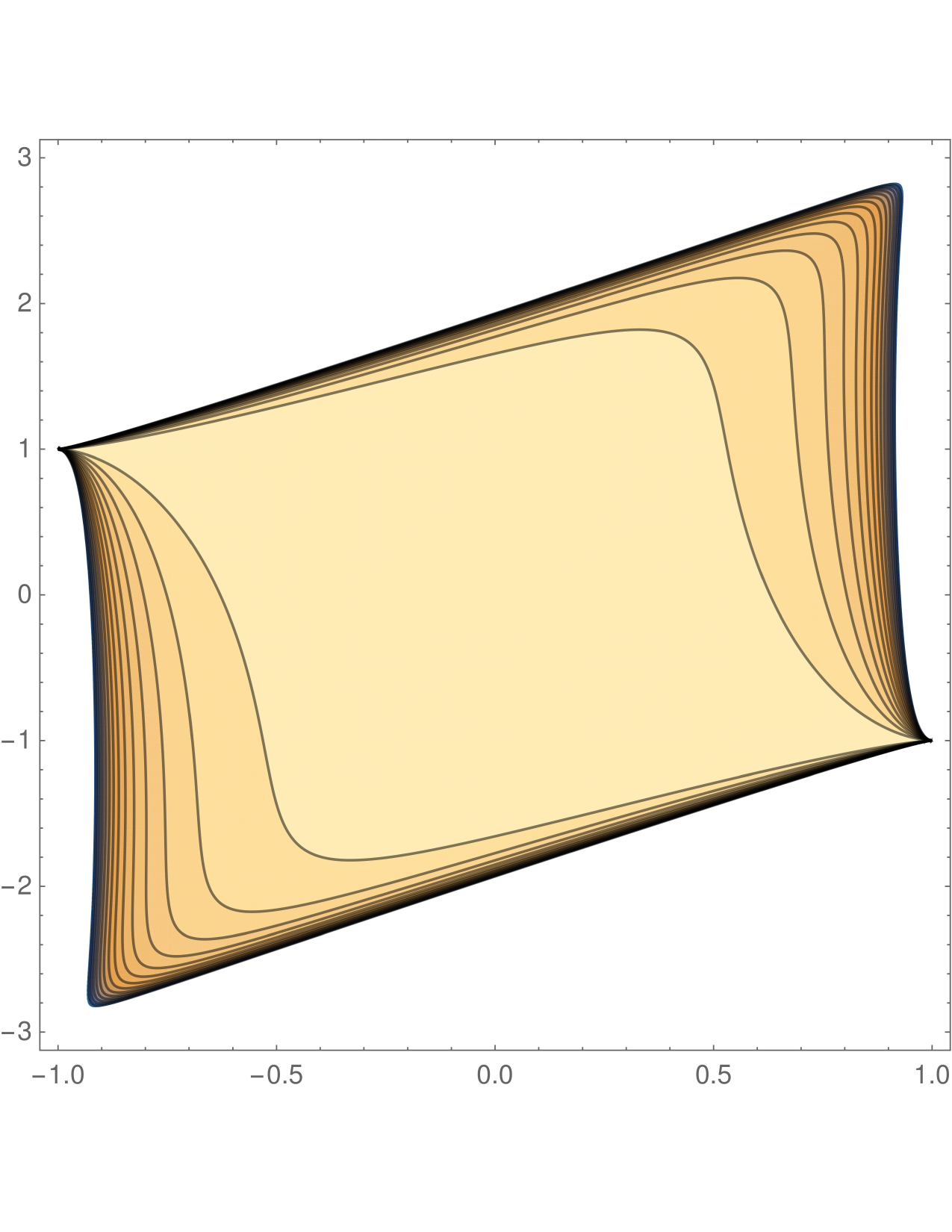}
    \end{subfigure}
     \quad
    \begin{subfigure}[b]{0.05\textwidth}
        \includegraphics[width=\textwidth]{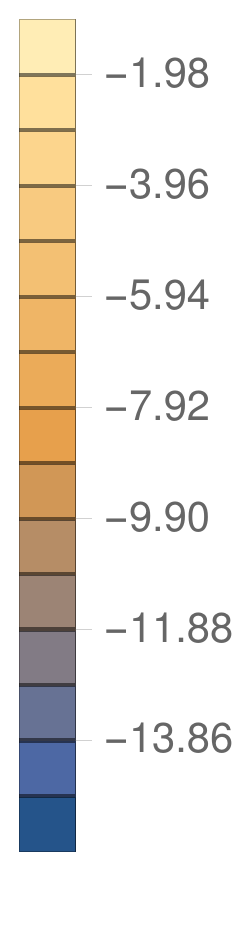}
    \end{subfigure}
 \caption{This figure analyses the right hand side of Equation~\eqref{eqn_s1s2rank1}: The plot on the left shows the graph of the right hand side of Equation~\eqref{eqn_s1s2rank1} which is always below $-1.06066$. The contour plot on the right displays the associated level sets.}
 \label{newBplots}   
\end{figure}

For $R_1=1$, $R_2=2$, $\vec{s}=(s_1,s_2)\in[0,1]^2$ and $\vec{e}=(e_1,e_2)\in\{0,1\}^2$ consider the discriminant from \eqref{eqn_discrimiant} given by
\begin{align*}
 \lapla_{(\vec{s},\vec{e})} & := \bigg(\frac{1}{ 4} \Big( ( s_1 s_2 + e_1 (s_1-s_2))^2 + 4 e_1 e_2 (s_1+s_2-2 s_1 s_2)^2 \\
 & \qquad + 4((1-s_1)(1-s_2) + e_2 (s_1-s_2))^2 \Big)\bigg)^2  \\
 & \quad - \bigg( ( s_1 s_2 + e_1 (s_1-s_2))^ 2 ((1-s_1)(1-s_2) + e_2 (s_1-s_2))^2  \\
 & \qquad - 2 e_1 e_2( s_1 s_2 + e_1 (s_1-s_2))((1-s_1)(1-s_2) + e_2 s_1-s_2) (s_1+s_2-2 s_1 s_2)^2 \\
 & \qquad + (s_1+s_2-2 s_1 s_2)^4  \bigg)	
\end{align*}
and set 
\begin{align}
 \gamma_{(N,S)} & := \{(s_1,s_2)\in [0,1]^2\mid \lapla_{(s_1,s_2,1,-1)}=0\}, \nonumber\\
 \gamma_{(S,N)} & := \{(s_1,s_2)\in [0,1]^2\mid \lapla_{(s_1,s_2,-1,1)}=0\}, \label{def_gamma}\\
 \gamma & : = \gamma_{(N,S)}\cup\gamma_{(S,N)}.\nonumber
\end{align}
The sets are plotted in Figure~\ref{fig_FFs1s2}.

\begin{lemma}\label{lem_2FF-s1s2}
The system $(J_{(1,2)},H_{(s_1,s_2)})$, $s_1,s_2\in[0,1]$, has exactly four critical points of rank $0$, namely
$\{(N,N),(N,S),(S,N),(S,S)\}$. The points $(N,N)$ and $(S,S)$ are non-degenerate and of
elliptic-elliptic type for all $s_1,s_2\in [0,1]^2$.
The point $(N,S)$ is non-degenerate except when $(s_1,s_2)\in\gamma_{(N,S)}$
and the point $(S,N)$ is non-degenerate except when $(s_1,s_2)\in\gamma_{(S,N)}$.
In particular, for $s_1, s_2 \in \{0,1\}$, all four points are elliptic-elliptic and
for $s_1=s_2=\frac{1}{2}$ the points $(N,S)$ and $(S,N)$ are both focus-focus.
\end{lemma}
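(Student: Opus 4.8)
The plan is to handle the four rank~$0$ points separately, exploiting that \eqref{eqn_s1s2system} is exactly \eqref{system} with $t_1=(1-s_1)(1-s_2)$, $t_2=s_1s_2$, $t_3=s_1+s_2-2s_1s_2$ and $t_4=s_1-s_2$. First I would quote Lemma~\ref{rankNull}, which applies verbatim and pins the rank~$0$ set to $\{(N,N),(N,S),(S,N),(S,S)\}$, with the point of $z$--coordinates $(e_1,e_2)\in\{\pm1\}^2$ sitting at the origin of the chart $\phi_{e_1e_2}$. Everything else is read off the characteristic polynomial $X^4+BX^2+C$ of $\om_p^{-1}d^2H(p)$ in Corollary~\ref{discrim}, whose constant term is the perfect square $C=\frac{1}{R_1^2R_2^2}\big((t_1+e_2t_4)(t_2+e_1t_4)-e_1e_2t_3^2\big)^2$ and whose discriminant in $Y=X^2$ is $\lapla_{(\vec s,\vec e)}$ of \eqref{eqn_discrimiant}; I would substitute $R_1=1$, $R_2=2$ and the relevant signs throughout.

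For the two outer points $(N,N)$ and $(S,S)$ one has $e_1e_2=+1$. The decisive fact is that $\lapla_{(s,\pm1,\pm1)}\ge0$ on all of $[0,1]^2$: writing $a=t_1\pm t_4$, $b=t_2\pm t_4$ (matching signs) and $\tau=t_3^2\ge0$, the quantity $R_1^4R_2^4\,\lapla$ factors as $\big[(R_1b-R_2a)^2+4R_1R_2\tau\big]\,(R_1b+R_2a)^2$, a product of two manifestly non-negative factors. By the Rank~$0$ Criterion (Proposition~\ref{prop_rank0criterion}) this already excludes focus--focus. To turn ``not focus--focus'' into ``elliptic--elliptic'' I would note that $B>0$ together with $C\ge0$ forces both roots of $Y^2+BY+C$ to be non-positive, so all four eigenvalues of $\om_p^{-1}d^2H$ are purely imaginary; by Proposition~\ref{prop_eigenvalues} neither a hyperbolic nor a focus--focus block can occur, leaving only the elliptic--elliptic type.

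For the two inner points $(N,S)$ and $(S,N)$ one has $e_1e_2=-1$, and I would argue directly from the sign of $\lapla$. By the definition \eqref{def_gamma}, $\gamma_{(N,S)}=\{\lapla_{(s,1,-1)}=0\}$ and $\gamma_{(S,N)}=\{\lapla_{(s,-1,1)}=0\}$, so off these curves $\lapla\ne0$ and Proposition~\ref{prop_rank0criterion} delivers non-degeneracy, with focus--focus precisely where $\lapla<0$. Plugging in $s_1=s_2=\frac12$ recovers the parameters of Corollary~\ref{cor_goodparameters}, where $(N,S)$ and $(S,N)$ are focus--focus; at the corners $s_1,s_2\in\{0,1\}$ the system is toric by Lemma~\ref{lem_s1s2integrable}, so all four rank~$0$ points are elliptic--elliptic. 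That the points truly degenerate along $\gamma$, rather than the criterion merely being silent, I would obtain from Proposition~\ref{prop_degenerate}, applied on a path across $\gamma$ along which the point switches between focus--focus and elliptic--elliptic.

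The hard part will be the non-degeneracy bookkeeping for $(N,N)$ and $(S,S)$ on the locus where $\lapla_{(s,\pm1,\pm1)}=0$: there the two purely imaginary eigenvalue pairs of $\om_p^{-1}d^2H$ coincide, so $\om_p^{-1}d^2H$ is not a regular element and Proposition~\ref{prop_rank0criterion} says nothing. The remedy is that non-degeneracy only asks for \emph{some} regular element in the pencil spanned by $\om_p^{-1}d^2H$ and $\om_p^{-1}d^2J$ (Lemma~\ref{nondegBF}): replacing $H$ by $H+\delta J$ amounts to $t_1\mapsto t_1+\delta R_1$ and $t_2\mapsto t_2+\delta R_2$ in Corollary~\ref{discrim}, and a short computation shows the perturbed discriminant is a positive multiple of $\delta^2$ for small $\delta\ne0$, splitting the coincident blocks into four distinct purely imaginary eigenvalues. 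This certifies that $(N,N)$ and $(S,S)$ remain non-degenerate elliptic--elliptic across the whole square. I expect the remaining care to lie in pinning down the global sign of $\lapla$ on $[0,1]^2$ from the factorization above, consistent with the \emph{Mathematica} plots behind Figure~\ref{fig_FFs1s2}.
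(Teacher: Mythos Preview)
Your approach is correct and substantially more analytical than the paper's. The paper's proof simply invokes Corollary~\ref{discrim} and then appeals to Mathematica plots (Figures~\ref{nondegFixedPoints} and~\ref{nondegFixedPoints2}) to assert that $\lapla_{(\vec s,\vec e)}$ is positive for $e_1e_2=+1$ and changes sign along two curves for $e_1e_2=-1$; it does not separately argue why ``not focus--focus'' upgrades to ``elliptic--elliptic'' for $(N,N)$ and $(S,S)$. Your factorization $R_1^4R_2^4\lapla=\big[(R_1b-R_2a)^2+4R_1R_2\tau\big](R_1b+R_2a)^2$ is an honest algebraic proof that $\lapla\ge 0$ when $e_1e_2=+1$, and your observation that $B>0$, $C\ge 0$ pins the eigenvalues to the imaginary axis is exactly what the paper omits. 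In fact your factorization reveals something the plot-based argument glosses over: $\lapla$ actually \emph{vanishes} along the curve $R_1b+R_2a=0$ inside $[0,1]^2$ (for instance at $(s_1,s_2)=(0,\tfrac{2}{5})$ when $e_1=e_2=1$, $R_1=1$, $R_2=2$), so the perturbation by $\delta J$ you propose is genuinely needed, not just defensive bookkeeping. One small addition: the same care applies on the locus $C=0$ (where $ab=\tau$), since then $X=0$ is a double eigenvalue even though $\lapla=B^2>0$; your perturbation $a\mapsto a+\delta R_2$, $b\mapsto b+\delta R_1$ moves $ab-\tau$ off zero to first order in $\delta$ precisely because $R_1b+R_2a\neq 0$ there, so the same remedy works. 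With that remark your argument is complete and in fact tighter than the paper's.
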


\begin{proof}
Using Corollary \ref{discrim}, we study the behaviour of the discriminant $\lapla_{(\vec{s},\vec{e})}$ for the parameter values in question.
If $(e_1, e_2) \in \{(1,1), (-1,-1)\}$, we are in the chart 
around $(N,N)$ or $(S, S)$ and $\lapla_{(\vec{s},\vec{e})}$ 
is positive. Figure \ref{nondegFixedPoints} shows a plot 
of the case $(e_1, e_2)=(1,1)$.
If $(e_1, e_2) \in \{(1,-1), (-1,1)\}$, we are in the chart 
around $(N,S)$ or $(S,N)$ and $\lapla_{(\vec{s},\vec{e})}$ 
vanishes along two curves. Figure \ref{nondegFixedPoints2} shows a
plot of the case $(e_1, e_2)=(1,-1)$.
\begin{figure}[h]
\centering
    \begin{subfigure}[b]{0.4\textwidth}
        \includegraphics[width=\textwidth]{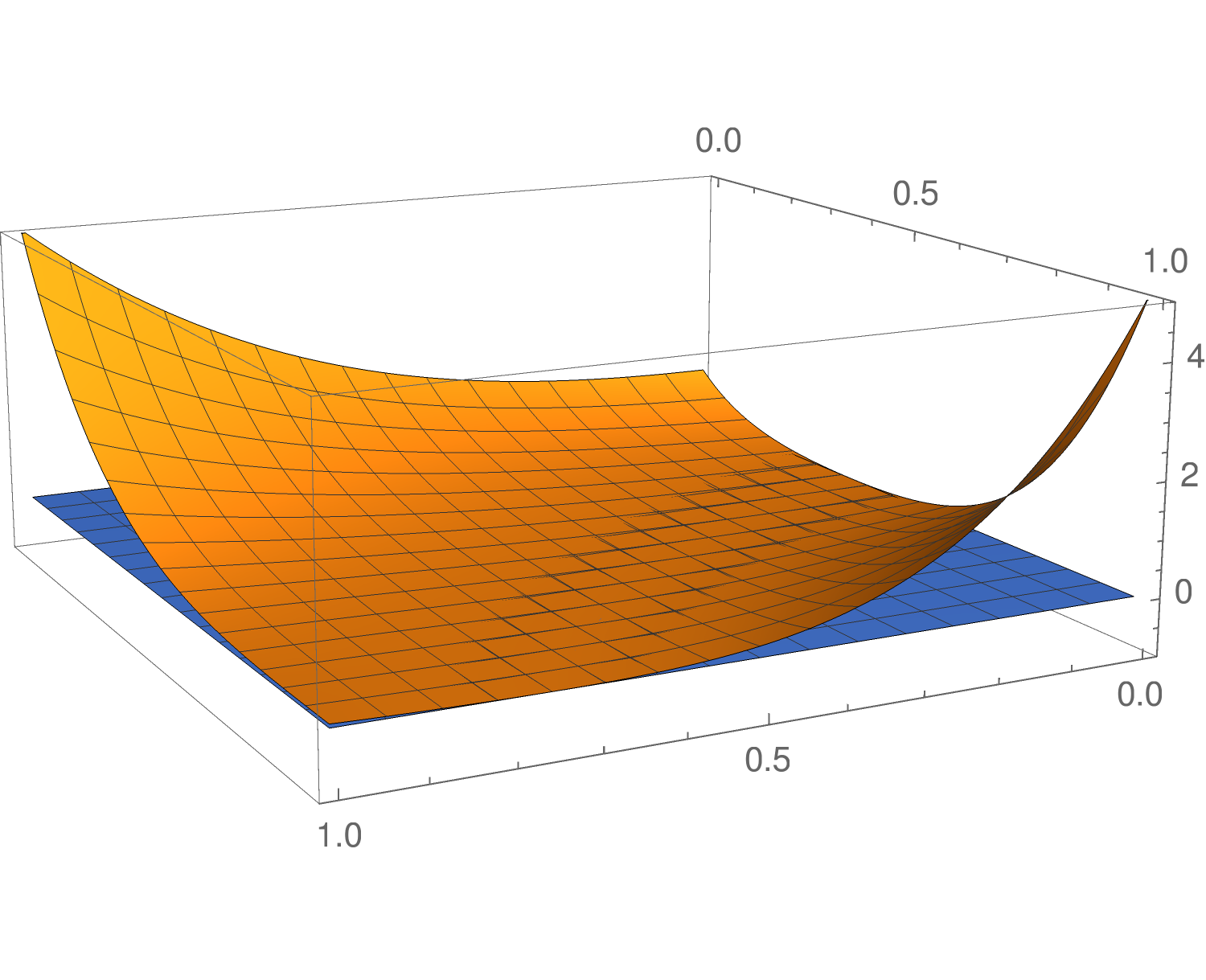}
    \end{subfigure}
    \qquad \quad
    \begin{subfigure}[b]{0.3\textwidth}
        \includegraphics[width=\textwidth]{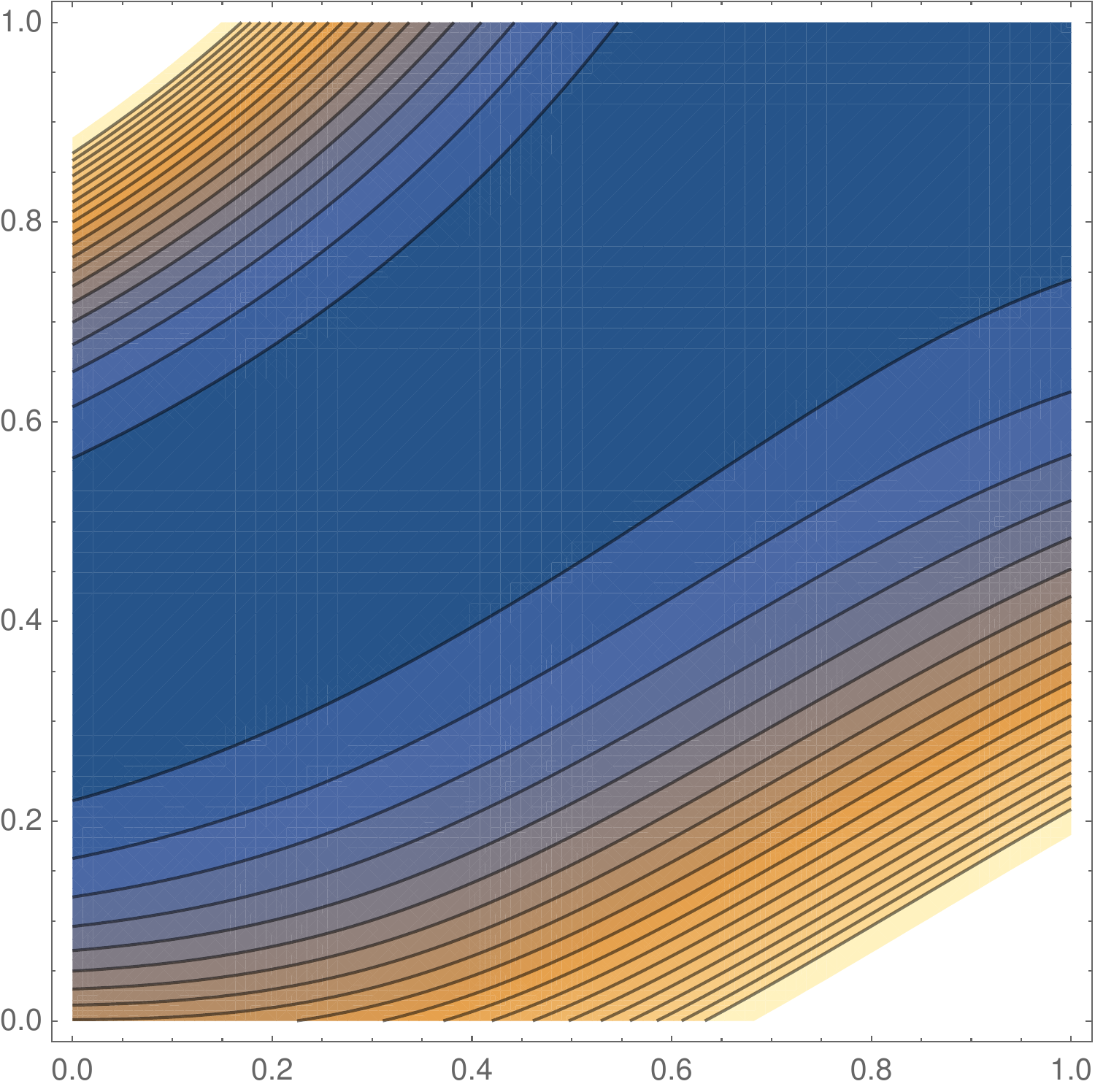}
    \end{subfigure}
     \quad
    \begin{subfigure}[b]{0.05\textwidth}
        \includegraphics[width=\textwidth]{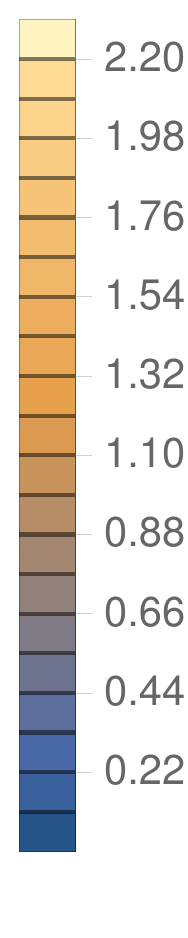}
    \end{subfigure}
 \caption{Case $(e_1, e_2)=(1,1)$: on the left, the graph of $ (s_1, s_2) \mapsto \lapla_{((s_1, s_2),(1,1))}$ (orange) and a plane through zero (blue) are displayed. On the right, the associated level sets of $ (s_1, s_2) \mapsto \lapla_{((s_1, s_2),(1,1))}$ are shown.}
 \label{nondegFixedPoints}   
\end{figure}
\begin{figure}[h]
\centering
    \begin{subfigure}[b]{0.4\textwidth}
        \includegraphics[width=\textwidth]{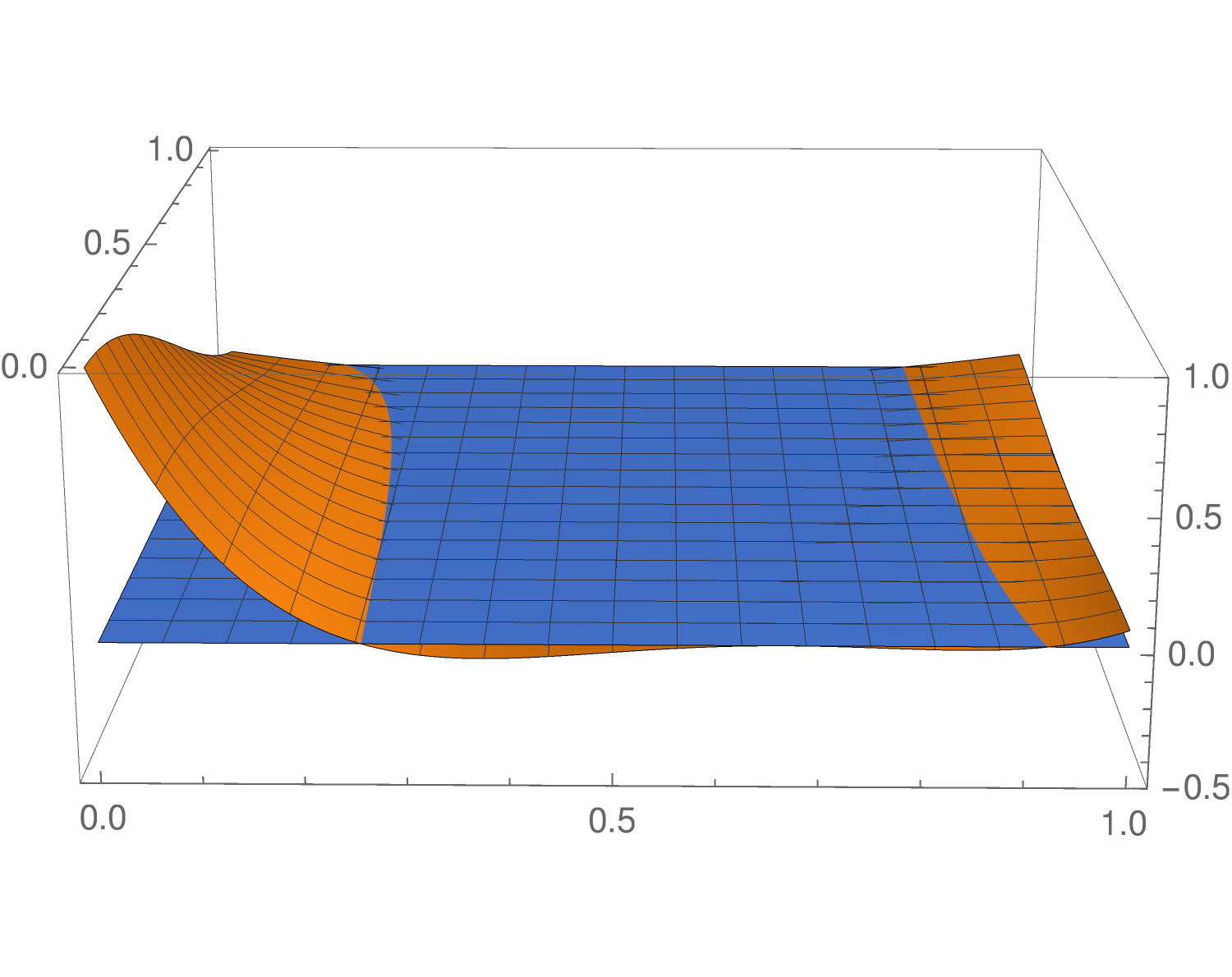}
    \end{subfigure}
    \qquad \quad
    \begin{subfigure}[b]{0.3\textwidth}
        \includegraphics[width=\textwidth]{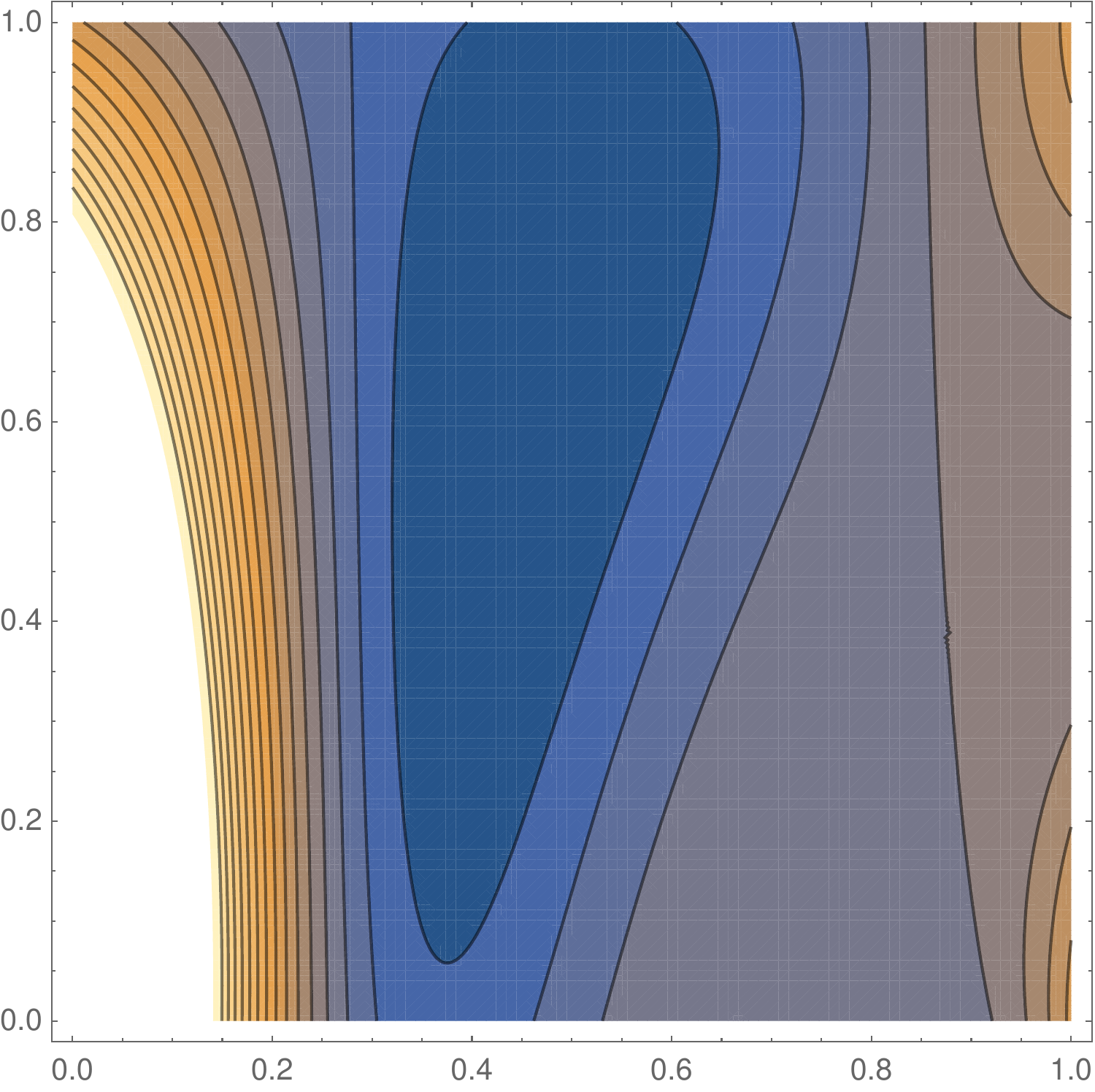}
    \end{subfigure}
     \quad
    \begin{subfigure}[b]{0.05\textwidth}
        \includegraphics[width=\textwidth]{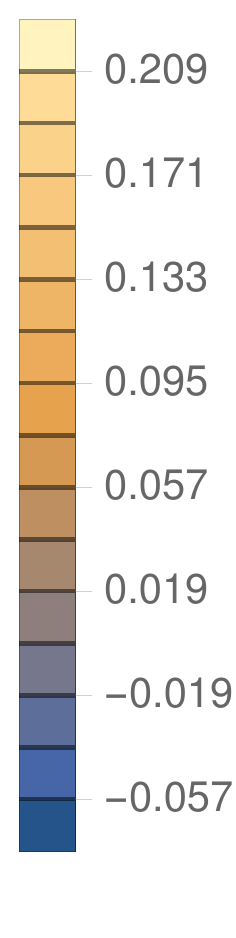}
    \end{subfigure}
 \caption{Case $(e_1, e_2)=(1,-1)$: on the left, the graph of $ (s_1, s_2) \mapsto \lapla_{((s_1, s_2),(1,-1))}$ (orange) and a plane through zero (blue) are displayed. On the right, the associated level sets of $ (s_1, s_2) \mapsto \lapla_{((s_1, s_2),(1,-1))}$ are shown.}
  \label{nondegFixedPoints2}   
\end{figure}
\end{proof}

Lemmas~\ref{lem_s1s2integrable},~\ref{lem_rank1nondegen}, and~\ref{lem_2FF-s1s2} combine to form the following, which implies Theorem~\ref{thm_s1s2intro}.

\begin{theorem}\label{thm_s1s2full}
The system $(J_{(1,2)},H_{(s_1,s_2)})$ has the following properties: 
\begin{enumerate}
 \item for all $s_1,s_2\in[0,1]^2$ it is an integrable system 
 such that, with the possible exception of $(N,S)$ and $(S,N)$ (depending on $s_1$ and $s_2$),
 all of the singular points are non-degenerate of type elliptic-elliptic or elliptic-regular\emph{;}
 \item the points $(N,S)$ and $(S,N)$ are rank 0 singular points which transition between
being of focus-focus, elliptic-elliptic, and degenerate as $(s_1,s_2)$ varies, and they
are only degenerate on a set $\gamma\subset [0,1]^2$ which is the union of four smooth curves.
\end{enumerate}
Thus, $(J_{(1,2)},H_{(s_1,s_2)})$ is a semitoric system
for all $(s_1,s_2)\in [0,1]^2\setminus\gamma$.
In particular, if $(s_1,s_2)\in \{(0,0),(0,1),(1,0),(1,1)\}$ then $(J_{(1,2)},H_{(s_1,s_2)})$
is a semitoric system with no focus-focus points and the system $(J_{(1,2)},H_{(\frac{1}{2},\frac{1}{2})})$
is a semitoric system with exactly two focus-focus points.
\end{theorem}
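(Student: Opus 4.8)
The plan is to assemble the three preceding lemmas and then verify the defining conditions of a semitoric system directly. First I would dispatch integrability: for every $(s_1,s_2)\in[0,1]^2$ this is exactly Lemma~\ref{lem_s1s2integrable}. Next, the rank $1$ part of item (1) is Lemma~\ref{lem_rank1nondegen}, which gives that every rank $1$ critical point is non-degenerate of elliptic-regular type, while the rank $0$ points $(N,N)$ and $(S,S)$ are handled by Lemma~\ref{lem_2FF-s1s2}, which shows they are always non-degenerate elliptic-elliptic. This establishes item (1) completely, since the only singular points left unaccounted for are $(N,S)$ and $(S,N)$.

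For item (2), Lemma~\ref{lem_2FF-s1s2} already tells us that $(N,S)$ and $(S,N)$ are rank $0$ points which are non-degenerate precisely off $\gamma_{(N,S)}$ and $\gamma_{(S,N)}$, so the degeneracy locus is $\gamma = \gamma_{(N,S)}\cup\gamma_{(S,N)}$. To read off their Williamson type away from $\gamma$ I would invoke the Rank~0 Criterion (Proposition~\ref{prop_rank0criterion}): where $\lapla<0$ the point is focus-focus, and where $\lapla>0$ it is non-degenerate and not focus-focus. The transition statement then follows from the argument of Proposition~\ref{prop_degenerate}, applied to the continuously varying eigenvalues: the Williamson type is locally constant on the non-degenerate locus, so on each connected component of $[0,1]^2\setminus\gamma$ the type of $(N,S)$ (and of $(S,N)$) cannot change, and it suffices to evaluate the type at one sample parameter per component. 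The corners $(0,0),(0,1),(1,0),(1,1)$ give elliptic-elliptic and the center $(\tfrac12,\tfrac12)$ gives focus-focus (both by Lemma~\ref{lem_2FF-s1s2}), and a finite check of representatives shows that no component yields a hyperbolic block. That $\gamma$ is a union of four smooth curves I would obtain by applying the implicit function theorem to the two real-analytic functions $(s_1,s_2)\mapsto\lapla_{(s_1,s_2,1,-1)}$ and $(s_1,s_2)\mapsto\lapla_{(s_1,s_2,-1,1)}$, checking that their gradients are non-vanishing along their zero sets, with the number of components read off (and confirmed numerically) from Figures~\ref{nondegFixedPoints} and~\ref{nondegFixedPoints2}.

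Finally I would conclude that $(J_{(1,2)},H_{(s_1,s_2)})$ is semitoric for $(s_1,s_2)\in[0,1]^2\setminus\gamma$ by checking the three clauses of the definition: $J$ is proper because $M=\mbS^2\x\mbS^2$ is compact; the flow of $X^J$ is the simultaneous rotation of the two spheres and is therefore $2\pi$-periodic with minimal period $2\pi$, so the associated $\mbS^1$-action is effective; and, by items (1) and (2), off $\gamma$ all singular points are non-degenerate with no hyperbolic block. Simplicity holds because the two focus-focus points $(N,S)$ and $(S,N)$ lie on the distinct levels $J=R_1-R_2$ and $J=R_2-R_1$, hence in different fibers of $J$. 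The special cases are then immediate: at the four corners the points $(N,S),(S,N)$ are elliptic-elliptic so there are no focus-focus points, and at $(\tfrac12,\tfrac12)$ both are focus-focus, giving exactly two.

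The main obstacle I anticipate is excluding hyperbolic blocks for $(N,S)$ and $(S,N)$ once $\lapla>0$: the Rank~0 Criterion only separates focus-focus from the non-focus-focus non-degenerate cases, so ruling out the elliptic-hyperbolic and hyperbolic-hyperbolic possibilities genuinely requires the connectedness-plus-sample-point argument (or, equivalently, checking the signs of the lower coefficients of the characteristic polynomial from Corollary~\ref{discrim}), and this is where the finite but slightly delicate case analysis lives. Establishing that $\gamma$ really consists of exactly four smooth arcs, rather than some component being singular or reducible, is the secondary difficulty.
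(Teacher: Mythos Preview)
Your proposal is correct and follows essentially the same route as the paper: the paper's own proof is the single sentence ``Lemmas~\ref{lem_s1s2integrable},~\ref{lem_rank1nondegen}, and~\ref{lem_2FF-s1s2} combine to form the following,'' and you have unpacked that assembly accurately. If anything you are more careful than the paper in flagging the need to exclude hyperbolic blocks at $(N,S)$ and $(S,N)$ when $\lapla>0$ and in justifying the smoothness of $\gamma$; the paper leaves both points to the numerical plots in Figures~\ref{nondegFixedPoints} and~\ref{nondegFixedPoints2} without further comment.
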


Note that the set $\gamma$ is given in Equation~\eqref{def_gamma} and is plotted in
Figure~\ref{fig_FFs1s2}.

\subsection{A degenerate point}
By Proposition~\ref{prop_degenerate} we know that for each $\vec{R}$ there exist some values of $s_1,s_2\in [0,1]$
such that $(J_{\vec{R}},H_{(s_1,s_2)})$ is a degenerate system
because the points $(N,S)$ and $(S,N)$ transition between being focus-focus and being elliptic-elliptic.

\begin{example}
Assume that $s_1=s_2$.  Since $(J_{(1,2)},H_{(0,0)})$ and $(J_{(1,2)},H_{(1,1)})$ have 
no focus-focus points and $(J_{(1,2)}, H_{\left(\frac{1}{2},\frac{1}{2}\right)})$
has focus-focus points at $(N,S)$ and $(S,N)$ there must exist
at least two values of $s\in\ ]0,1[$ such that $(J_{(1,2)},H_{(s,s)})$ has a degenerate rank 0 point by Proposition~\ref{prop_degenerate}.
Plugging $t_1 = (1-s)^2$, $t_2 = s^2$, $t_3 = 2s(1-s)$, $t_4=0$, $e_1=-1$, and $e_2=1$ into $\om^{-1}_p d^2H$ in Equation~\ref{omHessH}
and taking the discriminant of the characteristic polynomial equal to zero gives exactly two solutions
in the range $]0,1[$. These solutions are $s_+$ and $s_-$ where
\[s_{\pm} = \frac{1}{31} \left(\pm 8 \sqrt{5}+14 \mp \sqrt{82 \mp 24 \sqrt{5}}\right)\]
and $s_+ \approx 0.856953$, $s_- \approx 0.250291$.
Since there must be at least two degenerate points and these are the only points for which
$\om^{-1}_p d^2H$ has less than four distinct eigenvalues we conclude that
$(J_{(1,2)}, H_{(s_+, s_+)})$ and $(J_{(1,2)},H_{(s_-,s_-)})$ have a degenerate
point at $(S,N)$.
\end{example}

\section*{Acknowledgments}
The first author was partially supported by the Research Fund of the University of Antwerp and the second
author is partially supported by an AMS-Simons travel grant. The second author is extremely grateful
to the IHES for inviting him to visit in the summer of 2017, where some of the work for this project
was completed. Additionally, both authors are thankful to
Yohann Le Floch and Jaume Alonso Fern\'andez for helpful discussions.
Moreover, we thank Taras Skrypnyk for bringing the Gaudin model to our attention and
for describing to us its relevance in mathematical physics.

\bibliographystyle{aims}
\bibliography{2focusfocusreferences}


\end{document}